\documentclass{article}

\pagestyle{headings}

\usepackage[top=1in,bottom=1in,left=1.25in,right=1.25in]{geometry}
\usepackage{graphicx}
\usepackage{caption}
\usepackage{subcaption}
\usepackage{float} %Para colocar las imágenes y tablas donde quiera
\usepackage{lmodern}
\usepackage[T1]{fontenc}
\usepackage[english,activeacute]{babel}
\usepackage{mathtools}
\usepackage{amsmath,amsthm,amssymb,amsfonts}
\usepackage{mathrsfs}
\usepackage{stmaryrd} %Para usar \owedge y \obar
\usepackage[noadjust]{cite} %Para añadir bibliografía
\usepackage{enumerate}
\usepackage{verbatim} %Para comentar varias líneas
\usepackage[usenames,dvipsnames,svgnames,table]{xcolor}
\usepackage[utf8]{inputenc} %Para poner las tildes normales
\usepackage{soul} %Para poder usar \hl=subrayar de amarillo
\usepackage{tikz-cd} %Para hacer diagramas
\usetikzlibrary{babel} %Para que funcione tikzcd con babel
\usepackage{setspace} %Para el espacio de las filas de la tabla
\usepackage{array, multicol, multirow, makecell}
\usepackage{subfiles}
\usepackage[nottoc]{tocbibind} %Para añadir bibliografía al índice
\usepackage{longtable}
\usepackage{tikz}
\usetikzlibrary{positioning}
\usepackage{dynkin-diagrams}
\usepackage{authblk}
\usepackage{abstract}
\usepackage{bbm}

\usepackage[pdftex,backref=page]{hyperref} %Para poner hipervínculos

\hypersetup{linktocpage=true,colorlinks=true,citecolor=blue,urlcolor=blue}

%Para añadir imágenes de Inkscape
\usepackage{import}
\usepackage{xifthen}
\usepackage{pdfpages}
\usepackage{transparent}

\newcommand{%
	
	\import{./figures/}{.pdf_tex}
}[1]{%
	
	\import{./figures/}{#1.pdf_tex}
}

\let\OLDthebibliography\thebibliography
\renewcommand\thebibliography[1]{
	\OLDthebibliography{#1}
	\setlength{\parskip}{0pt}
}

\setlength{\parindent}{0cm} %Para quitar la sangría
%\numberwithin{equation}{section}

%%%%%%%%%%%%%%%%%%%%%%%%%%%%%% THEOREMS

\newtheorem{theorem}{Theorem}[section]
\newtheorem{corollary}[theorem]{Corollary}
\newtheorem{proposition}[theorem]{Proposition}
\newtheorem{lemma}[theorem]{Lemma}
\newtheorem{conjecture}[theorem]{Conjecture}
\theoremstyle{definition}
\newtheorem{definition}[theorem]{Definition}
\newtheorem{example}[theorem]{Example}
\newtheorem{remark}[theorem]{Remark}

%%%%%%%%%%%%%%%%%%%%%%%%%%%%%% COMMANDS

%Funciones

\DeclareMathOperator{\Hom}{Hom}
\DeclareMathOperator{\End}{End}
\DeclareMathOperator{\Aut}{Aut}

\DeclareMathOperator{\Ad}{Ad}
\DeclareMathOperator{\ad}{ad}
\DeclareMathOperator{\Der}{Der}

\DeclareMathOperator{\Mat}{Mat}

%Símbolos matemáticos
\newcommand{\N}{\mathbb{N}}

\newcommand{\R}{\mathbb{R}}
\newcommand{\C}{\mathbb{C}}

\newcommand{\frg}{\mathfrak{g}}
\newcommand{\frh}{\mathfrak{h}}

\newcommand{\frk}{\mathfrak{k}}

\newcommand{\frz}{\mathfrak{z}}

\newcommand{\GL}{\mathrm{GL}}

\newcommand{\SO}{\mathrm{SO}}

\newcommand{\Sp}{\mathrm{Sp}}

\renewcommand{\d}{\mathrm{d}}

\DeclarePairedDelimiter{\escal}{\langle}{\rangle}

\allowdisplaybreaks
\usepackage{paralist}
\newcommand{\Sl}{\mathfrak{sl}}
\newcommand{\Span}[1]{\operatorname{Span}\{#1\}}
\newcommand{\lie}[1]{\mathfrak{#1}}
\newcommand{\su}{\lie{su}}
\DeclareMathOperator{\Tr}{tr}

\newcommand{\g}{\lie g}
\usepackage{orcidlink}

%%%%%%%%%%%%%%%%%%%%%%%%%%%%%%%%%%%%%%%%

\title{\textbf{
Pseudo-K\"ahler and hypersymplectic structures on semidirect products}}
\author{Diego Conti \orcidlink{0000-0001-6812-4411} and Alejandro Gil-García \orcidlink{0000-0002-9370-241X}}
\affil{\normalsize Dipartimento di Matematica\\
	Università di Pisa\\
	Largo Bruno Pontecorvo, 5, 56127 Pisa PI, Italy\\
	diego.conti@unipi.it}
\affil{\normalsize Beijing Institute of Mathematical Sciences and Applications (BIMSA)\\
        No.\ 544, Hefangkou Village, Huaibei Town, Huairou District, Beijing 101408, China\\
        alejandrogilgarcia@bimsa.cn}
\date{\today}

\begin{document}

\maketitle

\begin{abstract}

We study left-invariant pseudo-K\"ahler and hypersymplectic structures on semidirect products $G\rtimes H$; we work at the level of the Lie algebra $\mathfrak{g}\rtimes\mathfrak{h}$. In particular we consider the structures induced on $\mathfrak{g}\rtimes\mathfrak{h}$ by existing pseudo-K\"ahler structures on $\mathfrak{g}$ and $\mathfrak{h}$; we classify all semidirect products of this type with $\mathfrak{g}$ of dimension $4$ and $\mathfrak{h}=\mathbb{R}^2$. In the hypersymplectic setting, we consider a more general construction on semidirect products. We construct a large class of hypersymplectic Lie algebras whose underlying complex structure is not abelian as well as non-flat hypersymplectic metrics on $k$-step nilpotent Lie algebras for every $k\geq3$.\bigskip

\emph{Keywords: Pseudo-K\"ahler, hypersymplectic, semidirect product, Ricci-flat}\medskip

\emph{MSC classification: Primary 53C26; Secondary 53C50, 22E25, 53C15}

\end{abstract}

\clearpage

\tableofcontents

\section*{Introduction}

Left-invariant metrics on a Lie group provide a setting for the study of geometric structures which is particularly suited to the construction of explicit examples: since all computations can be performed at the Lie algebra level, the PDE's characterizing the integrability conditions reduce to linear equations. Among left-invariant metrics, a widely studied class consists of semidirect products. Indeed, the study of Riemannian Einstein homogeneous spaces of negative scalar curvature, thanks to \cite{BL23,Lau10,Heber:noncompact}, reduces to the study of standard solvmanifolds of Iwasawa type, namely semidirect products $\g\rtimes_D\R$, where $\g$ is nilpotent and $D$ symmetric. This does not extend to indefinite signature, but even in this context semidirect products still provide a large class of Einstein metrics \cite{ContiRossi:IndefiniteNilsolitons}. Semidirect products have also been studied in the context of non-negative Ricci curvature (see \cite{MiatelloMetrics}) and ad-invariant metrics, where one exploits the fact that any Lie algebra $\g$ yields a canonical semidirect product $\g^*\rtimes\g$, with $\g$ acting on $\g^*$ via the coadjoint representation, admitting an ad-invariant metric of neutral signature (see \cite{Ova16}). Finally, it should be noted that every Lie algebra which is neither solvable nor semisimple has a non-trivial Levi decomposition, i.e.\ it is the semidirect product of its radical and a semisimple subalgebra; this has been exploited for instance in \cite{Fino_Raffero_2019} to construct closed $\mathrm{G}_2$-structures on non-solvable Lie groups.\medskip

The structures studied in this paper belong to the class of pseudo-K\"ahler metrics. Whilst positive-definite K\"ahler left-invariant metrics, or more generally homogeneous, are fairly well understood (see \cite{Lic88,DN88}), pseudo-K\"ahler invariant metrics show much greater flexibility. This is already evident in real dimension $4$, where Ovando's classification lists eleven distinct Lie algebras carrying a pseudo-K\"ahler metric, but only six admitting a definite K\"ahler metric (see \cite{Ova06}). Moreover, K\"ahler nilpotent Lie algebras are necessarily abelian by \cite{Hasegawa}, but this does not hold in the pseudo-K\"ahler case, as one can see from Ovando's classification. Pseudo-K\"ahler structures on Lie algebras have been also studied in \cite{CFU04,BS12,LU24}.\medskip

Pseudo-K\"ahler Lie groups find application in the construction of homogeneous quaternion-K\"ahler manifolds via the c-map (see \cite{Man21}). They can also   be used in the construction of Sasaki-Einstein solvmanifolds of indefinite signature, yielding some of the few known examples of left-invariant metrics admitting a Killing spinor (see \cite{Conti_Rossi_SegnanDalmasso_2023}).\medskip

Complex and pseudo-K\"ahler structures on semidirect products have already been considered in \cite{CCO15}, where in particular the authors characterize the situation in which the product almost complex structure on a semidirect product of two complex Lie algebras is integrable. In addition, \cite{CCO15} contains several examples of semidirect products $\g\rtimes\frh$ in dimension $6$ endowed with a complex structure for which $\frh$ is totally real. A variation of the notion of semidirect product was also considered in \cite{Valencia} in the context of special K\"ahler Lie algebras.\medskip

A particular class of pseudo-K\"ahler metrics is formed by hypersymplectic structures, introduced by Hitchin in \cite{Hit90}. A hypersymplectic structure on a $4n$-dimensional manifold is determined by a complex structure and a product structure that anticommute, together with a compatible metric such that the associated $2$-forms are closed. The holonomy of the metric is contained in the non-compact Lie group $\Sp(2n,\R)$, which is the split-real form of $\Sp(2n,\C)$. Hence, hypersymplectic manifolds are neutral-signature analogues of hyperk\"ahler manifolds, whose holonomy is contained in the compact real form $\Sp(n)$. Due to the common complexification of the holonomy groups, many facts from hyperk\"ahler geometry carry over to hypersymplectic manifolds. In particular, hypersymplectic manifolds are complex symplectic and Ricci-flat. Moreover, since $\Sp(2n,\R)\subset\mathrm{U}(n,n)$, hypersymplectic are particular examples of neutral Calabi-Yau manifolds. These have been studied in \cite{FPPS04,LU21}.\medskip

Left-invariant hypersymplectic structures have been widely studied. In \cite{And06}, the author characterizes hypersymplectic Lie algebras in terms of two Lie algebras equipped with flat torsion-free connections and parallel symplectic forms. This allows him to classify $4$-dimensional hypersymplectic Lie algebras. A procedure to construct hypersymplectic structures on $\R^{4n}$ beginning with affine-symplectic data on $\R^{2n}$ was given in \cite{AD06}. These hypersymplectic structures are shown to be invariant by a $3$-step nilpotent double Lie group and the resulting metrics are complete and not necessarily flat. The first $4$-step nilpotent examples of hypersymplectic Lie algebras were obtained in \cite{BGL21}, where the authors provide a method to construct hypersymplectic structures from the data of a pseudo-K\"ahler and a complex symplectic structure.\medskip

Two outstanding questions at the time of writing are whether it is possible to construct $k$-step nilpotent hypersymplectic Lie algebras for every $k\geq5$ and if there exists a $2$-step nilpotent hypersymplectic Lie algebra with non-flat metric. We will answer the first question in this paper, and leave the second open.\medskip

The key tool of this paper is a method to construct pseudo-K\"ahler structures on some semidirect products of Lie algebras (see Theorem~\ref{thm:construction}). We start with two pseudo-K\"ahler Lie algebras $\frg$ and $\frh$ and define a natural almost pseudo-Hermitian structure of the semidirect product $\frg\rtimes\frh$. Then we determine the conditions on the representation defining the semidirect product to ensure that the almost complex structure on $\frg\rtimes\frh$ is parallel; we call the the resulting object a \emph{pseudo-K\"ahler extension} (see Definition~\ref{def:pseudokahlerextension}). This allows us to construct several examples of pseudo-K\"ahler Lie algebras in dimension $6$ and $8$, starting both with an abelian $\frh$ and a non-abelian $\frh$ (see Section~\ref{sec:examples}).\medskip

We then consider a special case of $6$-dimensional pseudo-K\"ahler extensions of the form $\frg\rtimes\R^2$, and provide a classification (not exactly up to isometry, but using a stronger notion introduced in Definition~\ref{def:equivalence}). It turns out that of the eleven $4$-dimensional Lie algebras which admit a pseudo-K\"ahler structure classified in \cite{Ova06}, only three admit a pseudo-K\"ahler extension. Taking into account the metrics, this gives rise to four families of pseudo-K\"ahler Lie algebras of dimension $6$ (in one of which the parameter can be eliminated by rescaling).\medskip

We also study hypersymplectic structures on semidirect products of pseudo-K\"ahler Lie algebras as considered above. Examples of this type have been constructed in \cite{CPO11} with $\frg=\frh=\R^{2n}$ for any $n\in\N$; they all satisfy a special condition which we call \emph{Kodaira type}, namely they are $2$-step nilpotent with $J$-invariant center of dimension equal to half the dimension of the Lie algebra. We obtain $2$-step nilpotent hypersymplectic Lie algebras in every dimension $4n$ which are neither of Kodaira type nor equipped with an abelian complex structure. However, all the $2$-step nilpotent examples that we obtain are flat. Together with the existing examples in the literature, this leads us to conjecture that every $2$-step nilpotent hypersymplectic Lie algebra is flat.\medskip

Moreover, we are able to produce $k$-step nilpotent hypersymplectic Lie algebras, both flat and non-flat, for any $k\geq3$. To our knowledge, these are the first examples of $k$-step nilpotent hypersymplectic Lie algebras for $k\geq5$. Examples with $k=3$ and $k=4$ have been constructed in \cite{AD06} and \cite{BGL21}, respectively.\medskip

We note that the metrics constructed in this paper are of potential interest in the study of Einstein metrics. Indeed, all the hypersymplectic examples are Ricci-flat (indeed, neutral Calabi-Yau), and so are most of the pseudo-K\"ahler metrics, though not all. In the same spirit as \cite{Conti_Rossi_SegnanDalmasso_2023}, one can then apply a construction of \cite{BerardBergery} and obtain a pseudo-K\"ahler-Einstein bundle in two dimensions higher, on which one can consider the Sasaki-Einstein cone.

\subsection*{Acknowledgements}

The authors thank the referees for their valuable comments. D.\ C.\ would like to acknowledge the project PRIN 2022MWPMAB ``Interactions between Geometric Structures and Function Theories'', GNSAGA of INdAM and  the MIUR Excellence Department Project awarded to the Department of Mathematics, University of Pisa, CUP I57G22000700001. A.\ G.\ is supported by the German Science Foundation (DFG) under Germany's Excellence Strategy  --  EXC 2121 ``Quantum Universe'' -- 390833306.

\section{Construction of pseudo-K\"ahler structures}

In this section we introduce some  fundamental objects that will appear throughout the paper; in particular, pseudo-K\"ahler structures, semidirect products, and $\mathrm{U}(p,q)$-structures on a Lie algebra. We then introduce conditions for a semidirect product of two pseudo-K\"ahler Lie algebras to admit an induced pseudo-K\"ahler structure.\medskip

Given a manifold $M$ of dimension $n$ and a group $K\subset\GL(n,\R)$, a $K$-structure is a principal subbundle of the bundle of frames with fibre $K$ (see \cite{Kobayashi95}); notice that we have replaced with $K$ the more conventional symbol $G$, in order to reserve the latter for the ambient manifold. We will be interested in the particular case where $K=\mathrm{U}(p,q)$ is the group that preserves a complex structure on $\R^n=\R^{2(p+q)}$ and a scalar product $g$ of signature $(2p,2q)$ such that $g(J\cdot,J\cdot)=g$. A $\mathrm{U}(p,q)$-structure may be identified with a pair $(g,J)$, where $g$ is a pseudo-Riemannian metric on $M$ of signature $(2p,2q)$, and $J$ an almost complex structure such that $g(J\cdot,J\cdot)=g$. We point out that the terminology ``almost Hermitian'' is also used in the literature, but mostly reserved to the case $q=0$. A $\mathrm{U}(p,q)$-structure is pseudo-K\"ahler if $J$ is a complex structure and $\omega=g(J\cdot,\cdot)$ is closed, or equivalently if $J$ is parallel relative to the Levi-Civita connection of $g$.\medskip

Analogous definitions can be given on a Lie algebra $\g$. Thus, a pseudo-K\"ahler Lie algebra is a triple $(\g,g,J)$, where $\g$ is a real Lie algebra, $g$ a non-degenerate scalar product on $\g$, $J\colon\g\to\g$ an almost complex structure such that $g(J\cdot,J\cdot)=g$, and one of the following equivalent conditions hold:
\begin{compactenum}
\item $J$ is parallel relative to the Levi-Civita connection $\nabla$ of $g$; or
\item $J$ is a complex structure, i.e.\ the Nijenhuis tensor $N_J$ is zero, and the $2$-form $\omega=g(J\cdot,\cdot)$ is closed.
\end{compactenum}

It is clear that a pseudo-K\"ahler structure on a Lie algebra defines a left-invariant pseudo-K\"ahler structure on a Lie group with that Lie algebra. However, we shall perform all computations at the Lie algebra level, with no need to consider the group.\medskip

We aim at constructing pseudo-K\"ahler structures on a semidirect product. This means that we have pseudo-K\"ahler Lie algebras $(\g,g,J_g)$ and $(\lie h,h,J_h)$, and in addition a homomorphism $\varphi\colon\frh\to\Der(\g)$. We can then define the semidirect product
\[\tilde\g:=\g\rtimes_\varphi\frh, \quad [X+A,Y+B]_{\tilde\g}:=[X,Y]_\g+\varphi(A)Y-\varphi(B)X+[A,B]_\frh.\]
Here and in the sequel, we adopt the convention that $X,Y,Z$ denote elements of $\g$, and $A,B,C$ denote elements of $\frh$.\medskip

As a vector space, $\tilde \g$ is isomorphic to $\g\oplus\frh$, so it has an induced $\mathrm{U}(p,q)$-structure \begin{equation*}
    \tilde g(X+A,Y+B):=g(X,Y)+h(A,B), \quad \tilde J(X+A):=J_g(X)+J_h(A).
\end{equation*}

Notice that the subscripts in $J_g$ and $J_h$ have been chosen to suggest the fact that they are complex structures on the Lie algebra denoted by the corresponding (gothic) letter.

\begin{remark}\label{remark:2-step}
    If $\frg$ and $\frh$ are abelian, then $\tilde\frg$ is at most $2$-step solvable. If in addition $\varphi(A)\varphi(B)=0$ for all $A,B\in\frh$, then $\tilde\frg$ is $2$-step nilpotent. Indeed, the derived Lie algebra $[\tilde\frg,\tilde\frg]$ is spanned by the vectors $[X+A,Y+B]_{\tilde\g}=\varphi(A)Y-\varphi(B)X\in\frg$. Since $\frg$ is abelian, then $[\tilde\frg,\tilde\frg]$ is also abelian, which means that $\tilde\frg$ is $2$-step solvable. Using that $\varphi(A)\varphi(B)=0$ for all $A,B\in\frh$, we get \begin{align*}
        [X+A,[Y+B,Z+C]_{\tilde\g}]_{\tilde\g}&=[X+A,\varphi(B)Z-\varphi(C)Y]_{\tilde\g}\\
        &=\varphi(A)(\varphi(B)Z-\varphi(C)Y)=0,
    \end{align*} which means that $\tilde\frg$ is $2$-step nilpotent.
\end{remark}

We aim at constructing pseudo-K\"ahler structures on semidirect products. To that end, we need to introduce more notation and a lemma.\medskip

Having fixed $\g$ and $g$, for any $f\colon\g\to\g$, we will denote by $f^*$ the adjoint operator, i.e.\ $g(f\cdot,\cdot)=g(\cdot,f^*\cdot)$, and write $f=f^s+f^a$, where $f^s$ and $f^a$ denote the $g$-symmetric and $g$-antisymmetric parts of $f$, i.e.\ $f^s=\frac12 (f+f^*)$, $f^a=\frac12 (f-f^*)$. With this notation, any $\mathrm{U}(p,q)$-structure satisfies $J^*=-J$.

\begin{lemma}\label{lemma:tilde_nabla_XY}
    Let $X,Y,Z\in\frg$ and $A,B,C\in\frh$. Then
    \begin{gather*}
    \tilde{g}(\tilde{\nabla}_XY,Z+C)=g(\nabla^g_XY,Z)+g(\varphi(C)^sX,Y);\\
        \tilde{\nabla}_XB=-\varphi(B)^sX;\quad
    \tilde\nabla_A B=\nabla^h_A B; \quad     \tilde{\nabla}_AY=\varphi(A)^aY,
    \end{gather*}
where $\tilde\nabla$, $\nabla^g$, $\nabla^h$ denote the Levi-Civita connections respectively on $(\tilde \g,\tilde g)$, $(\g, g)$ and $(\lie h,h)$.
\end{lemma}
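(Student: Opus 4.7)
The plan is to apply the Koszul formula to $\tilde g$ and exploit the fact that on a Lie algebra viewed as left-invariant vector fields on the corresponding group, $\tilde g$ pairings of invariant fields are constant, so the three derivative terms drop out and one is left with
\[
2\tilde g(\tilde\nabla_U V,W)=\tilde g([U,V]_{\tilde\g},W)-\tilde g([U,W]_{\tilde\g},V)-\tilde g([V,W]_{\tilde\g},U).
\]
For each of the four identities I would fix a test element $W$, expand the brackets using $[X+A,Y+B]_{\tilde\g}=[X,Y]_\g+\varphi(A)Y-\varphi(B)X+[A,B]_\frh$, and use that $\tilde g$ is block-diagonal with respect to the decomposition $\tilde\g=\g\oplus\frh$, so every cross term is killed.

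For the first identity, with $U=X$, $V=Y$, $W=Z+C$, all brackets of pairs drawn from $\{X,Y,Z\}$ land in $\frg$, so the $\g$-summand reproduces $2g(\nabla^g_XY,Z)$ by Koszul applied on $(\g,g)$; the only remaining contributions come from pairing $[X,Z+C]_{\tilde\g}$ with $Y$ and $[Y,Z+C]_{\tilde\g}$ with $X$, each contributing a $-\varphi(C)$-term. Combining $g(\varphi(C)X,Y)+g(\varphi(C)Y,X)=2g(\varphi(C)^sX,Y)$ then yields the stated formula. The same splitting idea handles the other cases: for $\tilde\nabla_XB$ I test separately against $Z\in\g$ (which produces exactly $-2g(\varphi(B)^sX,Z)$ by the same symmetric-part computation) and against $C\in\frh$ (which gives $0$ since all relevant brackets lie in $\g$ and pair trivially with $C$); for $\tilde\nabla_AB$ the test against $Y\in\g$ vanishes and the test against $C\in\frh$ reduces immediately to Koszul on $(\frh,h)$, giving $\nabla^h_AB$; for $\tilde\nabla_AY$ the test against $C\in\frh$ vanishes and the test against $Z\in\g$ produces $g(\varphi(A)Y,Z)-g(\varphi(A)Z,Y)=2g(\varphi(A)^aY,Z)$ via the antisymmetric-part identity.

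There is no real obstacle here: the argument is a mechanical four-case bookkeeping exercise with Koszul. The only points requiring a bit of care are (i) recognising which bracket components live in $\g$ versus $\frh$ so that one can discard the orthogonal cross-terms of $\tilde g$, and (ii) consistently converting the two terms $g(\varphi(\cdot)X,Y)$ and $g(\varphi(\cdot)Y,X)$ into their symmetric or antisymmetric combinations according to whether the sign produced by Koszul is $+$ or $-$. Provided one is careful with signs, the four formulas fall out essentially by inspection.
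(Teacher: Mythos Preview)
Your proposal is correct and follows essentially the same approach as the paper: both apply the Koszul formula on the Lie algebra, use the block-diagonal form of $\tilde g$ to discard cross-terms, and reduce each case to either the Koszul formula on $(\g,g)$ or $(\frh,h)$ plus a $\varphi$-contribution that is then rewritten via the symmetric or antisymmetric part. The only cosmetic difference is that the paper tests some identities against $Z+C$ simultaneously whereas you split into $Z$ and $C$ separately; the computations are otherwise identical.
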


\begin{proof}
Koszul's formula for the Levi-Civita connection gives
\begin{align*}
2\tilde{g}(\tilde{\nabla}_XY,Z+C)&=\tilde g([X,Y]_{\tilde\g},Z+C)-\tilde g(Y,[X,Z+C]_{\tilde\g})-\tilde g(X,[Y,Z+C]_{\tilde\g})\\
&= g([X,Y]_\g,Z)- g(Y,[X,Z]_\g)- g(X,[Y,Z]_\g)\\
&\quad+\tilde g([X,Y]_\g,C)-\tilde g(Y,[X,C]_{\tilde\g})-\tilde g(X,[Y,C]_{\tilde\g})\\
&= 2g(\nabla^g_X Y,Z)+g(\varphi(C)X,Y)+g(\varphi(C)Y,X),
\end{align*}
giving the first equation. The second follows similarly from
\begin{align*}
2\tilde{g}(\tilde{\nabla}_XB,Z+C)&= g([X,B]_{\tilde\g},Z)-\tilde g(B,[X,C]_{\tilde\g})-\tilde g(X,[B,Z]_{\tilde\g})\\
&=-g(\varphi(B)X,Z)-g(X,\varphi(B)Z).
\end{align*}
For the last two, we compute
\begin{gather*}
2\tilde{g}(\tilde{\nabla}_AB,Z+C)
=h([A,B]_{\frh},C)-h(B,[A,C]_{\frh})-h(A,[B,C]_{\frh})=2h(\nabla_A B,C),\\
2\tilde{g}(\tilde{\nabla}_AY,Z+C)
=g([A,Y]_{\tilde \frg},Z)-g(Y,[A,Z]_{\tilde \frg})-\tilde g(A,[Y,Z+C]_{\tilde\frg})=2g(\varphi(A)^a,Z).\qedhere
\end{gather*}
\end{proof}

We can now prove:
\begin{theorem}\label{thm:construction}
    Let $(\frg,g,J_g)$ and $(\frh,h,J_h)$ be pseudo-K\"ahler Lie algebras and let $\varphi:\frh\to\Der(\frg)$ be a representation. Then $(\tilde\frg,\tilde g,\tilde J)$ is a pseudo-K\"ahler Lie algebra if and only if \begin{itemize}
        \itemsep 0em
        \item $J_g\circ\varphi(A)^s=\varphi(J_hA)^s$,
        \item $J_g\circ\varphi(A)^a=\varphi(A)^a\circ J_g$,
    \end{itemize} for all $A\in\frh$.
\end{theorem}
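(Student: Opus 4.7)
The plan is to work with the characterization that $(\tilde\frg,\tilde g,\tilde J)$ is pseudo-K\"ahler if and only if $\tilde J$ is parallel with respect to the Levi-Civita connection $\tilde\nabla$, i.e.\ $\tilde\nabla_U(\tilde J V)=\tilde J(\tilde\nabla_U V)$ for all $U,V\in\tilde\g$. Since this condition is $\R$-bilinear in $(U,V)$, I will split the verification into the four cases determined by whether $U,V$ belong to $\g$ or $\frh$, and in each case compute $\tilde\nabla$ using Lemma~\ref{lemma:tilde_nabla_XY}. The two hypotheses on $\varphi$ will appear naturally as the only non-automatic obstructions, giving both directions of the equivalence from a single calculation.

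The two easy cases come first. For $U=A$, $V=B$ both in $\frh$, Lemma~\ref{lemma:tilde_nabla_XY} gives $\tilde\nabla_A B=\nabla^h_A B$, and since $(\frh,h,J_h)$ is already pseudo-K\"ahler, the identity $\tilde\nabla_A(\tilde J B)=\tilde J(\tilde\nabla_A B)$ is automatic. For $U=A\in\frh$ and $V=Y\in\g$, the lemma gives $\tilde\nabla_A Y=\varphi(A)^a Y\in\g$, so the condition reduces to $\varphi(A)^a\circ J_g=J_g\circ\varphi(A)^a$, which is exactly the second hypothesis. For $U=X\in\g$ and $V=B\in\frh$, the lemma gives $\tilde\nabla_X B=-\varphi(B)^s X\in\g$, and the condition becomes $\varphi(J_h B)^s=J_g\circ\varphi(B)^s$, which is the first hypothesis.

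The subtle case is $U=X$, $V=Y$ both in $\g$, because $\tilde\nabla_X Y$ has both a $\g$-component and a (non-trivial) $\frh$-component. Writing $\tilde\nabla_X Y=\nabla^g_X Y+\pi_\frh(\tilde\nabla_X Y)$, the $\g$-component of $\tilde\nabla_X(\tilde JY)=\tilde J(\tilde\nabla_X Y)$ is automatic from $\nabla^g J_g=0$. The $\frh$-component requires comparing $\pi_\frh(\tilde\nabla_X(J_g Y))$ with $J_h(\pi_\frh(\tilde\nabla_X Y))$. Using the lemma's formula $h(\pi_\frh(\tilde\nabla_X Y),C)=g(\varphi(C)^s X,Y)$ and the antisymmetry relations $J_h^*=-J_h$, $J_g^*=-J_g$, this rewrites as
\[
g(\varphi(C)^s X,J_g Y)=-g(\varphi(J_h C)^s X,Y)\quad\text{for all }X,Y\in\g,\ C\in\frh,
\]
which is equivalent to $J_g\circ\varphi(C)^s=\varphi(J_h C)^s$, again the first hypothesis.

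Putting these four cases together yields both implications: if the two conditions hold, every case is satisfied, so $\tilde J$ is parallel; conversely, parallelism of $\tilde J$ specialized to the cases $(X,B)$ and $(A,Y)$ already forces the two conditions. I expect the only mild obstacle to be the bookkeeping in the $(X,Y)$ case, specifically the correct use of $J_h^*=-J_h$ to turn the $\frh$-inner product against $C$ into an evaluation at $J_h C$; once this is noted, the equivalence is immediate.
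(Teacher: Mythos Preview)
Your proposal is correct and follows essentially the same approach as the paper: both argue via $\tilde\nabla\tilde J=0$, split into the four cases $(X,Y),(X,B),(A,Y),(A,B)$, and use Lemma~\ref{lemma:tilde_nabla_XY} together with $\nabla^g J_g=0$ and $\nabla^h J_h=0$ to reduce each case to one of the two conditions on $\varphi$. The only cosmetic difference is that the paper computes $(\tilde\nabla_U\tilde J)V$ directly and pairs it against $Z+C$, whereas you compare $\tilde\nabla_U(\tilde JV)$ with $\tilde J(\tilde\nabla_UV)$ componentwise; the underlying calculations and the resulting equivalences are identical.
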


\begin{proof}
The pseudo-K\"ahler condition is equivalent to $\tilde\nabla \tilde J=0$. Using Lemma~\ref{lemma:tilde_nabla_XY} and  $\tilde g(\tilde J\cdot,\cdot)=-\tilde g(\cdot,\tilde J\cdot)$, we find \begin{align*}
        \tilde g((\tilde\nabla_X\tilde J)Y,Z+C)&=\tilde g(\tilde\nabla_X\tilde J Y,Z+C)-\tilde g(\tilde J\tilde\nabla_XY,Z+C)\\
        &=\tilde g(\tilde\nabla_XJ_gY,Z+C)+\tilde g(\tilde\nabla_XY,J_gZ+J_hC)\\
        &=g(\nabla^g_XJ_gY,Z)+g(\varphi(C)^sX,J_gY)\\
        &\quad+g(\nabla^g_XY,J_gZ)+g(\varphi(J_hC)^sX,Y).
    \end{align*}

    Notice that since $\nabla^gJ_g=0$,
    $$g(\nabla^g_XJ_gY,Z)+g(\nabla^g_XY,J_gZ)=g(\nabla^g_XJ_gY,Z)-g(J_g\nabla^g_XY,Z)=0.$$

    Hence, we obtain $$\tilde g((\tilde\nabla_X\tilde J)Y,Z+C)=g((\varphi(J_hC)^s-J_g\varphi(C)^s)X,Y).$$
The other component of $\tilde\nabla_X\tilde J$ is determined by
\[(\tilde\nabla_X\tilde J)B=\tilde\nabla_X\tilde JB-\tilde J(\tilde\nabla_XB)=\tilde\nabla_XJ_hB-\tilde J(-\varphi(B)^sX)=-\varphi(J_hB)^sX+J_g\varphi(B)^sX.\]
On the other hand, $\tilde\nabla_A\tilde J$ is determined by
     \begin{align*}
        (\tilde\nabla_A\tilde J)Y&=\tilde\nabla_A\tilde JY-\tilde J(\tilde\nabla_AY)=\tilde\nabla_AJ_gY-\tilde J(\varphi(A)^aY)\\
        &=\varphi(A)^aJ_gY-J_g\varphi(A)^aY,\\
        (\tilde\nabla_A\tilde J)B&=\tilde\nabla_A\tilde J B-\tilde J\tilde\nabla_AB=\nabla^h_AJ_hB-J_h\nabla^h_AB=0,
    \end{align*}
where we have used $\nabla^hJ_h=0$.
\end{proof}

\begin{definition}
\label{def:pseudokahlerextension}
A pseudo-K\"ahler Lie algebra $(\tilde\frg,\tilde g,\tilde J)$ constructed as in Theorem~\ref{thm:construction} will be called a \emph{pseudo-K\"ahler extension} of $(\frg,g,J_g)$ by $(\frh,h,J_h)$.
\end{definition}

We will occasionally refer to pseudo-K\"ahler extensions simply as extensions in order to avoid repetitions.

\begin{example}
\label{example:withthenotation}
Let us consider an example in real dimension $4$. Take two copies of the abelian $2$-dimensional Lie algebra, with a positive-definite and a negative-definite pseudo-K\"ahler structure, i.e.\
\begin{align*}
\frg&=\Span{e_1,e_2},& g&=e^1\otimes e^1+e^2\otimes e^2,&J_ge_1&=e_2,\\
\frh&=\Span{a_1,a_2},& h&=-a^1\otimes a^1-a^2\otimes a^2,&J_ha_1&=a_2.
\end{align*}

The notation $\{e^1,e^2\}$ represents the basis of $\g^*$ dual to $\{e_1,e_2\}$; similarly for $a^1,a^2$. This convention will be used again in Section~\ref{sec:classification}.\medskip

Any representation $\varphi:\frh\to\Der(\frg)$ satisfying the conditions of Theorem~\ref{thm:construction} will have $\varphi(a_1)^s=\varphi(a_2)^s=0$. The space of endomorphisms commuting with $J_g$ and with symmetric part equal to zero is spanned by $J_g$ itself, so up to a unitary change of basis we can assume $\ad(a_1)=\lambda J_g$ and $\ad(a_2)=0$. The resulting $4$-dimensional Lie algebra takes the form
\[\tilde\g=\Span{e_1,e_2,e_3,e_4}, \quad e_3=a_1, e_4=a_2,\]
with
\[\d e^1=-\lambda e^2\wedge e^3,\,\d e^2=\lambda e^1\wedge e^{3},\,\d e^3=0=\d e^4.\]
Throughout the paper, we will write more succinctly
\[\tilde \g=(-\lambda e^{23},\lambda e^{13},0,0),\]
with notation adapted from \cite{Salamon:ComplexStructures}. The pseudo-K\"ahler structure is given by
\[\tilde Je_1=e_2, \tilde Je_3=e_4,\quad \tilde g=e^1\otimes e^1+e^2\otimes e^2-e^3\otimes e^3-e^4\otimes e^4.\]
The Lie algebra $\tilde\g$ is abelian if $\lambda=0$, and otherwise isomorphic to the Lie algebra denoted by $\mathfrak{rr}'_{3,0}$ in \cite{Ova06} with a flat pseudo-K\"ahler structure.
\end{example}

\begin{remark}
The pseudo-K\"ahler Lie algebras constructed in Theorem~\ref{thm:construction} have a non-trivial $\tilde J$-invariant ideal of the same real dimension as $\frg$. This shows in particular that not every pseudo-K\"ahler Lie algebra can be obtained from this construction.
\end{remark}

\begin{remark}
    The above construction also works in the case of \emph{para-K\"ahler} Lie algebras. These are triples $(\frg,g,E)$ where $E$ is an integrable para-complex structure, $g(E\cdot,E\cdot)=-g$ and $\omega:=g(E\cdot,\cdot)$ is closed.
\end{remark}

Recall that a complex structure $J$ is called \emph{abelian} if $[JX,JY]=[X,Y]$ for all $X,Y\in\frg$. Then we have the following result.

\begin{lemma}\label{lemma:abelian_cx}
    In the situation of Theorem~\ref{thm:construction}, the complex structure $\tilde J$ is abelian if and only if both $J_g$ and $J_h$ are abelian and $\varphi(J_hA)^aJ_g=\varphi(A)^a$ for all $A\in\frh$.
\end{lemma}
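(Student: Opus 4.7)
The plan is to compute $[\tilde{J}(X+A),\tilde{J}(Y+B)]_{\tilde{\g}}$ directly using the semidirect product bracket formula and compare it termwise with $[X+A,Y+B]_{\tilde{\g}}$. Expanding using $\tilde J(X+A) = J_g X + J_h A$ and bilinearity, the abelian condition $[\tilde J\cdot,\tilde J\cdot]=[\cdot,\cdot]$ becomes
\[
[J_g X, J_g Y]_\g + \varphi(J_h A)J_g Y - \varphi(J_h B)J_g X + [J_h A, J_h B]_\frh = [X,Y]_\g + \varphi(A)Y - \varphi(B)X + [A,B]_\frh.
\]
Projecting onto $\frh$ yields immediately $[J_h A, J_h B]_\frh=[A,B]_\frh$, i.e.\ $J_h$ is abelian. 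Setting $A=B=0$ in the $\g$-component gives $[J_g X, J_g Y]_\g=[X,Y]_\g$, i.e.\ $J_g$ is abelian. What remains, after these two identities are used, is the purely representation-theoretic condition
\[
\varphi(J_h A)J_g Y - \varphi(J_h B)J_g X = \varphi(A)Y - \varphi(B)X \quad \text{for all } X,Y,A,B,
\]
which by taking $B=0$, $X=0$ collapses to $\varphi(J_h A)J_g = \varphi(A)$ for all $A\in\frh$.

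The next (and key) step is to show that, \emph{under the pseudo-K\"ahler hypothesis of Theorem~\ref{thm:construction}}, the identity $\varphi(J_h A)J_g = \varphi(A)$ is equivalent to its antisymmetric component $\varphi(J_h A)^a J_g = \varphi(A)^a$. The plan is to split $\varphi(J_h A)J_g = \varphi(J_h A)^s J_g + \varphi(J_h A)^a J_g$ and observe that the symmetric condition is automatic. Indeed, by Theorem~\ref{thm:construction} we have $\varphi(J_h A)^s = J_g\varphi(A)^s$; since the left-hand side is $g$-symmetric, taking adjoints on the right-hand side gives $-\varphi(A)^s J_g = J_g\varphi(A)^s$, i.e.\ $\varphi(A)^s$ anti-commutes with $J_g$. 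Therefore $\varphi(J_h A)^s J_g = J_g \varphi(A)^s J_g = -J_g^2 \varphi(A)^s = \varphi(A)^s$, which is precisely the symmetric part of the required identity. What survives is then exactly $\varphi(J_h A)^a J_g = \varphi(A)^a$.

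The main subtlety I anticipate is this splitting argument: one has to notice that the first pseudo-K\"ahler condition of Theorem~\ref{thm:construction} forces $\varphi(A)^s$ to anti-commute with $J_g$, which makes the symmetric half of $\varphi(J_h A)J_g = \varphi(A)$ tautological. The converse direction then requires no extra work, since one simply reverses the decomposition: assuming $J_g, J_h$ abelian and $\varphi(J_h A)^a J_g = \varphi(A)^a$, combining with the automatic symmetric identity recovers $\varphi(J_h A)J_g = \varphi(A)$, which together with the two abelian conditions reassembles to $[\tilde J\cdot,\tilde J\cdot]_{\tilde\g} = [\cdot,\cdot]_{\tilde\g}$.
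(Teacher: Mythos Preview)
Your proof is correct and follows essentially the same approach as the paper: both arguments expand the bracket on $\tilde\g$ to reduce the abelian condition to the three sub-conditions $J_g$ abelian, $J_h$ abelian, and $\varphi(J_hA)J_g=\varphi(A)$, and then show that the symmetric half $\varphi(J_hA)^sJ_g=\varphi(A)^s$ of the last condition is automatic from the hypotheses of Theorem~\ref{thm:construction}. The paper derives this automatic identity directly via $\varphi(A)^s=-J_g\varphi(J_hA)^s=(-J_g\varphi(J_hA)^s)^*=\varphi(J_hA)^sJ_g$, whereas you first establish that $\varphi(A)^s$ anticommutes with $J_g$ and then deduce the same equality; the content is identical.
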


\begin{proof}
    Let $X,Y\in\frg$ and $A,B\in\frh$. Then \begin{itemize}
        \itemsep 0em
        \item $[\tilde J X,\tilde J Y]_{\tilde\g}=[X,Y]_{\g}$ if and only if $J_g$ is abelian,
        \item $[\tilde J A,\tilde J B]_{\tilde\g}=[A,B]_{\frh}$ if and only if $J_h$ is abelian,
        \item $[\tilde J A,\tilde J Y]_{\tilde\g}=[A,Y]_{\tilde\g}$ if and only if $\varphi(J_hA)J_g=\varphi(A)$ for all $A\in\frh$.
    \end{itemize}

    Decomposing the last equation into its $g$-symmetric and $g$-antisymmetric part, and using that
    \[\varphi(A)^s=-J_g\varphi(J_hA)^s
    =(-J_g\varphi(J_hA)^s)^*
    =\varphi(J_hA)^sJ_g,\] we get $\varphi(J_hA)^aJ_g=\varphi(A)^a$.
\end{proof}

In general, the representation $\varphi$ may have a non-trivial kernel. It turns out that if the kernel is $J_h$-invariant, it can be factored out before performing the semidirect product construction, giving rise to a semidirect product $\g\rtimes_{\varphi'}(\frh/\ker\varphi)$ where the induced map $\varphi'$ is injective. More generally, we have the following:

\begin{proposition}
\label{prop:factoroutkernel}
In the hypotheses of Theorem~\ref{thm:construction}, let $\lie k$ be a non-degenerate $J_h$-invariant ideal in $\ker\varphi$. Then both $\lie k$ and $\lie h/\lie k$ are pseudo-K\"ahler, the induced  map $\varphi'\colon \lie h/\lie k\to\Der(\lie g)$ still satisfies the hypotheses of Theorem~\ref{thm:construction}, and we have an exact sequence of pseudo-K\"ahler Lie algebras
\[0\to \lie k \to \g\rtimes_{\varphi}\lie h\to \g\rtimes_{\varphi'} \frac{\frh}{\lie k}\to 0.\]
\end{proposition}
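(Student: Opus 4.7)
The plan is to verify the three assertions using the $h$-orthogonal splitting $\frh=\lie k\oplus\lie k^\perp$, which exists since $h|_{\lie k}$ is non-degenerate. Because $\lie k$ is $J_h$-invariant and $J_h$ is $h$-antisymmetric, the complement $\lie k^\perp$ is also $J_h$-invariant; I will use $\lie k^\perp$ as a concrete vector-space model for $\frh/\lie k$.

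For $\lie k$ itself, the restricted metric is non-degenerate by hypothesis, $J_h|_{\lie k}$ is integrable, and the restricted K\"ahler form is the pullback of $\omega_h$ along the inclusion of Lie algebras, hence closed. For the quotient $\frh/\lie k$, the complex structure $\bar J_h$ descends by $J_h$-invariance of $\lie k$ and is integrable, and $\bar h$ corresponds to $h|_{\lie k^\perp}$. The nontrivial verification is closedness of the induced K\"ahler form $\bar\omega$: for $A,B,C\in\lie k^\perp$, write $[A,B]_\frh=k+\ell$ with $k\in\lie k$ and $\ell\in\lie k^\perp$; since $J_hk\in\lie k$ is $h$-orthogonal to $C$, one has $\omega_h([A,B]_\frh,C)=h(J_h\ell,C)=\bar\omega(\overline{[A,B]_\frh},\bar C)$. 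Summing cyclically in $A,B,C$ gives $d\bar\omega(\bar A,\bar B,\bar C)=d\omega_h(A,B,C)=0$.

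The induced map $\varphi'\colon\frh/\lie k\to\Der(\frg)$ is well-defined because $\lie k\subset\ker\varphi$. Together with the relation $\bar J_h\bar A=\overline{J_hA}$, the two algebraic conditions of Theorem~\ref{thm:construction} for $\varphi'$ follow immediately from those for $\varphi$ applied to an arbitrary lift, since the symmetric and antisymmetric parts of $\varphi(A)$ depend only on the class of $A$.

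Finally, $\{0\}\oplus\lie k\subset\tilde\frg$ is an ideal: for $K\in\lie k$ and $X+A\in\tilde\frg$ one computes $[X+A,K]_{\tilde\frg}=-\varphi(K)X+[A,K]_\frh=[A,K]_\frh\in\lie k$. The quotient is canonically isomorphic as a pseudo-K\"ahler Lie algebra to $\frg\rtimes_{\varphi'}(\frh/\lie k)$, since the bracket and the data $(\tilde g,\tilde J)$ descend to exactly the semidirect-product data built from $\varphi'$; the inclusion of $\lie k$ and this projection are isometric and $\tilde J$-equivariant by construction. The step requiring most care is the closedness of $\bar\omega$, and it is precisely there that non-degeneracy of $\lie k$ enters; everything else is routine bookkeeping.
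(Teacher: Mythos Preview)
Your proof is correct and complete. You and the paper both rely on the $h$-orthogonal splitting $\frh=\lie k\oplus\lie k^\perp$ and the identification $\frh/\lie k\cong\lie k^\perp$, but you verify the pseudo-K\"ahler condition on $\lie k$ and $\frh/\lie k$ via the equivalent characterization ``$N_J=0$ and $d\omega=0$'', whereas the paper argues through the Levi-Civita connection: it observes that $h(\nabla^k_AB,C)=h(\nabla^h_AB,C)$ for $A,B,C\in\lie k$ and likewise $p(\nabla^\pi_{\pi(A)}\pi(B),\pi(C))=h(\nabla^h_AB,C)$ for $A,B,C\in\lie k^\perp$, whence $\nabla^kJ_h=0$ and $\nabla^\pi J_{\frh/\lie k}=0$ follow from $\nabla^hJ_h=0$ together with the $J_h$-invariance of the splitting. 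The two routes are of comparable length; yours has the minor advantage of making explicit the step where non-degeneracy of $\lie k$ is used (namely, to kill the $J_hk$ term in the $d\bar\omega$ computation), and you also spell out why $\{0\}\oplus\lie k$ is an ideal in $\tilde\frg$, a point the paper leaves implicit. One small quibble: non-degeneracy of $\lie k$ is already needed to obtain the direct-sum splitting $\frh=\lie k\oplus\lie k^\perp$ itself, not only in the closedness step.
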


\begin{proof}
Denote by $\nabla^k$ the Levi-Civita connection on $\lie k$. Then for $A,B,C\in\lie k$ we have
\[h( \nabla^k_A B,C) =h( \nabla^h_A B,C).\]
therefore, $\nabla^kJ_h=0$.\medskip

Similarly, the projection map $\pi\colon\lie h\to\lie h/\lie k$ restricts to an isomorphism of vector spaces $\lie k^\perp\cong \lie h/\lie k$; denoting by $p$ the metric induced on $\lie h/\lie k$, we have
\[p(\nabla^{\pi}_{\pi(A)} \pi(B),\pi(C)) =h(\nabla^h_A B,C), \quad A,B,C\in \lie k^\perp,\]
where $\nabla^\pi$ indicates the covariant derivative on $\lie h/\lie k$. Therefore, $\nabla^\pi J_{\frh/\frk}=0$.\medskip

Now the fact that $\varphi'$ satisfies the hypotheses of Theorem~\ref{thm:construction} is straightforward.
\end{proof}

\begin{remark}
At the Lie group level, a semidirect product $\lie g\rtimes_\varphi\lie h$ determines a semidirect product $G\rtimes_\Phi H$, where $G$ and $H$ are the simply connected Lie groups with Lie algebras $\g$, $\lie h$, and $\Phi\colon G\to\Aut(H)$ is obtained by lifting $\varphi$ to a homomorphism of Lie groups $\tilde\varphi\colon G\to\Aut(\lie h)$ and then composing with the natural inclusion $\Aut(\lie h)\to\Aut(H)$. We may then interpret the long exact sequence as a principal bundle $$\begin{tikzcd}
K \arrow[r] & G\rtimes_\Phi H \arrow[d] \\
            & G\rtimes_{\Phi'}(H/K)
\end{tikzcd}$$
\end{remark}

A special class of semidirect products appears in the study of Einstein Riemannian solvmanifolds under the name of \emph{standard} solvmanifolds (see \cite{Heber:noncompact}). This condition was generalized to the pseudo-Riemannian case in \cite{ContiRossi:IndefiniteNilsolitons}: a standard decomposition of a Lie algebra $\tilde \g$ endowed with a metric $\tilde g$ is an orthogonal decomposition $\tilde\g=\frg\rtimes\frh$, where $\frg$ is a nilpotent ideal and $\frh$ an abelian subalgebra.\medskip

We can use Theorem~\ref{thm:construction} to  construct standard pseudo-K\"ahler extensions of a fixed nilpotent Lie algebra (though nilpotency is not essential in what follows); motivated by Proposition~\ref{prop:factoroutkernel}, we illustrate this in the case where $\varphi$ is injective.
\begin{corollary}\label{cor:abelianextension}
If $(\lie g,g,J)$ is a pseudo-K\"ahler Lie algebra and $\lie h\subset\Der(\lie g)$ an abelian subalgebra such that:
\begin{compactenum}
\item $f^*\in\lie h$ for all $f\in\lie h$, so that
\[\lie h=\lie h_0\oplus \lie h_1,\]
where $\lie h_0$ consists of $g$-antisymmetric derivations and $\lie h_1$ consists of $g$-symmetric derivations;
\item $\lie h_0$ has even dimension;
\item for every $A\in\lie h_1$, $J\circ A\in\lie h_1$;
\item $J\circ A=A\circ J$ for all $A$ in $\lie h_0$.
\end{compactenum}
Then $\lie g\rtimes_\iota\lie h$ has a pseudo-K\"ahler structure, where $\iota\colon\lie h\to \Der(\g)$ is the inclusion.
\end{corollary}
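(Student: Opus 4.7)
The plan is to apply Theorem~\ref{thm:construction} directly, with $\varphi=\iota$. Since $\lie h$ is abelian, the integrability of any almost complex structure on $\lie h$ and the closedness of the associated Kähler form are automatic, so $\lie h$ becomes a pseudo-Kähler Lie algebra as soon as one equips it with an orthogonal almost complex structure. The only real work is to choose $(h, J_h)$ so that the two algebraic conditions of Theorem~\ref{thm:construction} hold, and for this I would exploit the splitting $\lie h=\lie h_0\oplus\lie h_1$.

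First, I would define $J_h$ block-diagonally with respect to $\lie h=\lie h_0\oplus\lie h_1$. On $\lie h_1$ I would set $J_h(A):=J\circ A$; condition (3) says that this lands back in $\lie h_1$, and $J_h^2 A = J^2\circ A = -A$, so this is an honest complex structure on $\lie h_1$. On $\lie h_0$, which is even-dimensional by condition (2), I would pick any complex structure at all. For the metric $h$, I would take $\lie h_0$ and $\lie h_1$ mutually orthogonal and equip each summand with any non-degenerate scalar product compatible with $J_h$ (such a metric always exists on a finite-dimensional complex vector space).

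Next I would verify the two conditions of Theorem~\ref{thm:construction} for $\varphi=\iota$ separately on $\lie h_0$ and $\lie h_1$. For $A\in\lie h_0$ one has $\iota(A)^s=0$ and $\iota(A)^a=A$; by construction $J_h A\in\lie h_0$, so $\iota(J_h A)^s=0=J\circ\iota(A)^s$, while the antisymmetric condition $J\circ A=A\circ J$ is exactly hypothesis (4). For $A\in\lie h_1$ one has $\iota(A)^s=A$ and $\iota(A)^a=0$, so the antisymmetric condition is trivial; the symmetric condition $J\circ A=\iota(J_h A)^s$ reduces, since $J_h A\in\lie h_1$, to $J\circ A = J_h A$, which is precisely the definition of $J_h$ on $\lie h_1$. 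Both requirements then extend to arbitrary $A\in\lie h$ by linearity, and Theorem~\ref{thm:construction} yields the desired pseudo-Kähler structure on $\lie g\rtimes_\iota\lie h$.

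There is essentially no obstacle here, because the corollary is really a repackaging of Theorem~\ref{thm:construction} in the special case of an abelian subalgebra of derivations: conditions (3) and (4) have been designed precisely so that $\lie h_1$ carries a canonical complex structure induced by $J$ and so that $\lie h_0$ sees $J$ as commuting. The only minor point to handle with care is the bookkeeping that $\iota(A)^s$ and $\iota(A)^a$ correspond to the projections onto $\lie h_1$ and $\lie h_0$ respectively, which is what makes the two hypotheses of Theorem~\ref{thm:construction} split cleanly along the decomposition.
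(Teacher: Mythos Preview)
Your proposal is correct and follows essentially the same approach as the paper: define $J_h$ on $\lie h_1$ by $J_h(A)=J\circ A$, extend by an arbitrary complex structure on the even-dimensional $\lie h_0$, take any compatible metric (automatic pseudo-K\"ahler since $\lie h$ is abelian), and verify the two conditions of Theorem~\ref{thm:construction}. The paper's proof is terser and simply asserts that those conditions hold, whereas you spell out the case split $A\in\lie h_0$ versus $A\in\lie h_1$, but the argument is the same.
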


\begin{proof}
Define an almost complex structure $J_h$ on $\lie h_1$ by $J_h(A)=J\circ A$, and extend it to $\lie h$ by choosing an arbitrary complex structure on $\lie h_0$. Since $\frh$ is abelian, any compatible metric defines a pseudo-K\"ahler structure.\medskip

Relative to the inclusion $\iota\colon \lie h\to\Der(\lie g)$, the conditions of Theorem~\ref{thm:construction} hold.
\end{proof}

\begin{remark}
In the situation of Corollary~\ref{cor:abelianextension}, if we further assume that $\Tr X$ and $\Tr XY$ vanish for all $X,Y\in\lie h$, we see that the metric on $\lie g\rtimes\lie h$ is Ricci-flat by \cite[Proposition 4.1]{ContiRossi:IndefiniteNilsolitons}.
\end{remark}

A natural question is when two pseudo-K\"ahler extensions of a Lie algebra $\g$ obtained by the method of Theorem~\ref{thm:construction} should be regarded as different. We will make use of the following criterion:

\begin{proposition}[\cite{AzencottWilson2,Conti_Rossi_SegnanDalmasso_2023}]
\label{prop:pseudoAzencottWilson}
Let $K$ be a subgroup of $\SO(r,s)$ with Lie algebra $\lie k$ and $\tilde{\g}$ a Lie algebra of the form $\tilde{\g}=\g\rtimes \lie{h}$ endowed with a $K$-structure. Let $\chi\colon\lie h\to\Der(\g)$ be a Lie algebra homomorphism such that, extending $\chi(A)$ to $\tilde{\g}$ by declaring it to be zero on $\lie h$,
\begin{equation}
\label{eqn:adXstar}
\chi(A)-\tilde{\ad}(A)\in\lie k, \quad [\chi (A),\tilde{\ad}(B)]=0,\ A,B\in\lie h.
\end{equation}

Let $\tilde{\g}^*$ be the Lie algebra $\g\rtimes_\chi\lie{h}$. If $\tilde{G}$ and $\tilde{G}^*$ denote the connected, simply connected Lie groups with Lie algebras $\tilde{\g}$ and $\tilde{\g}^*$, with the corresponding left-invariant $K$-structures, there is an isometry from $\tilde{G}$ to $\tilde{G}^*$, whose differential at $e$ is the identity of $\g\oplus\lie{h}$ as a vector space, mapping the $K$-structure on $\tilde{G}$ into the $K$-structure on $\tilde{G}^*$.
\end{proposition}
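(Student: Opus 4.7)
My plan is to follow the Azencott--Wilson strategy and construct an explicit diffeomorphism $F\colon\tilde G\to\tilde G^*$ whose differential at every point, read through the obvious identification of tangent spaces, differs from the identity only by an element of $K$, so that it automatically intertwines the two left-invariant $K$-structures. Since $\g$ is an ideal and $\lie h$ a complementary subalgebra in both $\tilde\g$ and $\tilde\g^*$, both groups are diffeomorphic to $G\times H$ via the product decomposition, where $G$ and $H$ are the simply connected Lie groups of $\g$ and $\lie h$. I would take $F$ to be the identity in these coordinates, so that $dF_e=\mathrm{id}$ on $\g\oplus\lie h$ is built in.

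The key algebraic input is the commutativity hypothesis $[\chi(A),\tilde{\ad}(B)]=0$, which on $\g$ reads $[\chi(A),\varphi(B)]=0$ with $\varphi:=\tilde{\ad}|_{\lie h}$. Thus $\chi$ and $\varphi$ are commuting Lie algebra homomorphisms $\lie h\to\Der(\g)$ and assemble into a representation of $\lie h\oplus\lie h$ on $\g$, which lifts, since $H$ is simply connected, to a homomorphism $H\times H\to\Aut(G)$; the Lie group lifts $\Phi,\Phi_\chi\colon H\to\Aut(G)$ of $\varphi$ and $\chi$ thus commute, and for $h=\exp A$ one has $\Phi_\chi(h)_*\circ\Phi(h)^{-1}_*=\exp(\chi(A)-\varphi(A))$ in $\Aut(\g)$. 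At $(g,h)$ the two left-invariant frames agree along $\lie h$ and differ along $\g$ precisely by this automorphism; transported back to $\g\oplus\lie h$ by left translation, the discrepancy is the block-diagonal operator with $\exp(\chi(A)-\varphi(A))$ on $\g$ and the identity on $\lie h$. This is the exponential of the $\g$-block of $\chi(A)-\tilde{\ad}(A)$, which the first hypothesis places in $\lie k$; hence $dF_{(g,h)}$ carries the left-invariant $K$-frame at $(g,h)$ in $\tilde G$ to a $K$-equivalent frame in $\tilde G^*$.

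The main obstacle is reconciling the identity map on the $H$-factor with the residual $-\ad^{\lie h}(A)$ that sits on the $\lie h$-block of $\chi(A)-\tilde{\ad}(A)$ after extending $\chi$ by zero. Since $\lie k\subset\lie{so}(r,s)$, the hypothesis forces this block to be skew with respect to the metric on $\lie h$, hence that metric to be $\ad$-invariant; to close the argument I need to check that the $\lie h$-block of $\chi(A)-\tilde{\ad}(A)$ extended by zero on $\g$ also lies in $\lie k$, so that the two blocks of $\chi(A)-\tilde{\ad}(A)$ decouple and the $\g$-block alone controls the frame comparison above. Once this compatibility is secured at every point, $F$ is a smooth diffeomorphism with $dF_e=\mathrm{id}$ that intertwines the left-invariant $K$-structures of $\tilde G$ and $\tilde G^*$, hence is an isometry, completing the proof.
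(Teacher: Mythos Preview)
First, note that the paper does not supply its own proof of this proposition; it is quoted from the references \cite{AzencottWilson2,Conti_Rossi_SegnanDalmasso_2023}, so there is no in-paper argument to compare against directly.

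Your sketch follows the Azencott--Wilson strategy and the frame computation is essentially correct: with $F=\mathrm{id}_{G\times H}$, the differential of $F$ read in the two left-invariant trivializations is block-diagonal, acting as $\Phi_\chi(h)_*^{-1}\Phi(h)_*$ on $\g$ and as the identity on $\lie h$. You then correctly isolate the obstacle: for $F$ to intertwine the $K$-structures one needs the infinitesimal operator
\[
\begin{pmatrix}\varphi(A)-\chi(A)&0\\0&0\end{pmatrix}
\]
to lie in $\lie k$, whereas the hypothesis only gives
\[
\begin{pmatrix}\chi(A)-\varphi(A)&0\\0&-\ad^{\lie h}(A)\end{pmatrix}\in\lie k.
\]
You do not actually close this gap; the sentence ``I need to check that the $\lie h$-block \dots\ also lies in $\lie k$'' names precisely what is missing, and for an arbitrary $K\subset\SO(r,s)$ this does not follow from the stated hypotheses. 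So as written the proof is incomplete.

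That said, in every instance where the paper applies the proposition the obstacle vanishes: either $\lie h$ is abelian (so $\ad^{\lie h}=0$ and the two displayed operators coincide up to sign), or the $K$-structure is the block-diagonal $\mathrm{U}(p,q)$-structure of Theorem~\ref{thm:construction}, in which case $\lie k$ itself respects the splitting $\g\oplus\lie h$ and the hypothesis $\chi(A)-\tilde\ad(A)\in\lie k$ is equivalent to the two diagonal blocks lying separately in $\lie u(\g)$ and $\lie u(\lie h)$. Thus your argument does cover what the paper needs. For the statement in full generality you should consult the cited references: there the isometry is obtained not by declaring $F=\mathrm{id}_{G\times H}$ but by embedding $\tilde\g^*$ into the Lie algebra of $K$-structure-preserving transformations of $\tilde G$ (sending $X\in\g$ to itself and $A\in\lie h$ to $A+(\chi(A)-\tilde\ad(A))$, the second summand landing in the isotropy), and that embedding is the ingredient your outline lacks.
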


\begin{remark}
Azencott and Wilson proved in \cite[Lemma 4.2]{AzencottWilson2} that in the context of Riemannian solvmanifolds with negative sectional curvature and with notation as in Proposition~\ref{prop:pseudoAzencottWilson}, the group $\tilde G^*$ is closed in the group of isometries of $\tilde G$; this led them to define the notion of \emph{modification} for solvmanifolds with negative (sectional) curvature. Given a Lie group $S$ with Lie algebra $\lie s$ acting simply transitively by isometries on a Riemannian solvmanifold with negative curvature, $\lie s'$ is a  modification of $\lie s$ if it is a subalgebra of the normalizer of $\lie s$ in the Lie algebra of the group of isometries $I$ and the connected Lie subgroup with Lie algebra $\lie s'$ is closed in  $I$ and acts simply transitively. In the context of this paper, closedness in $I$ is less relevant; the use of Proposition~\ref{prop:pseudoAzencottWilson} is to establish an algebraic criterion which allows us to reduce redundancy in classifying pseudo-K\"ahler extensions. For this reason, we will not use the notion of modification in this paper, but a more ad-hoc definition.
\end{remark}

Beside identifying pseudo-K\"ahler extensions related by symmetries as in Proposition~\ref{prop:pseudoAzencottWilson}, we also identify extensions related by isomorphisms of $\g$ and $\frh$. More precisely:

\begin{definition}
\label{def:equivalence}
Given pseudo-K\"ahler Lie algebras $(\g,g,J_g)$, $(\g',g',J'_g)$, $(\frh,h,J_h)$ and $(\frh',h',J_h')$, and given $\varphi\colon\frh\to\Der(\g)$, $\varphi'\colon\frh'\to\Der(\g')$ satisfying the conditions of Theorem~\ref{thm:construction}, we will say that the pseudo-K\"ahler extensions $\g\rtimes_\varphi\frh$ and $\g'\rtimes_{\varphi'}\frh'$ are \emph{AW-related} (with reference to Azencott and Wilson) if there are Lie algebra isomorphisms
\[f_g\colon\g\to\g',\quad f_h\colon\frh\to\frh',\]
respecting the pseudo-K\"ahler structures, i.e.\ $f_g^*g'=g$, $f_g\circ J_g=J_g'\circ f_g$, $f_h^*h'=h$, $f_h\circ J_h=J_h'\circ f_h$; and such that for all $A\in\frh$, $A'\in\frh'$
\[\big(f_g\varphi(A)f_g^{-1}-\varphi'(f_h(A))\big)^s=0, \quad [f_g\varphi(A) f_g^{-1},\varphi'(A')]=0.\]
We will say a pseudo-K\"ahler extension is \emph{trivial} if it is AW-related to one with $\varphi=0$, i.e.\ a direct product.
\end{definition}

The definition of AW-related extensions implies that $f_g\varphi(A)f_g^{-1}-\varphi'(f_h(A))$ is $g'$-antisymmetric for all $A\in\frh$, and therefore, by the hypotheses on $\varphi,\varphi'$, an element of the Lie algebra $\lie k\cong\lie u(p,q)$ of $g'$-antisymmetric endomorphisms of $\g'$ that commute with $J'_g$. It follows then from Proposition~\ref{prop:pseudoAzencottWilson} that, at the Lie group level, two AW-related extensions $(\tilde G,\tilde g,\tilde J)$, $(\tilde G',\tilde g',\tilde J')$ are related by an isometry of pseudo-Riemannian manifolds that respects the complex structures.

\begin{remark}
    A pseudo-K\"ahler extension of $(\frg,g,J_g)$ by $(\frh,h,J_h)$ is trivial if and only if $\varphi(A)$ is $g$-antisymmetric for all $A\in\frh$.
\end{remark}

\begin{remark}
AW-relatedness as defined in Definition~\ref{def:equivalence} is not an equivalence relation. For instance, consider the pseudo-K\"ahler structure on $\R^4=\escal{e_1,e_2,e_3,e_4}$ given by $$J_ge_1=e_2,J_ge_3=e_4\quad\text{and}\quad g=e^1\otimes e^1+e^2\otimes e^2-e^3\otimes e^3-e^4\otimes e^4,$$ and the one given in Example~\ref{example:withthenotation} on
$\R^2=\escal{a_1,a_2}$. We can define two pseudo-K\"ahler extensions $\R^4\rtimes_{\varphi_1}\R^2$, $\R^4\rtimes_{\varphi_2}\R^2$ with
\[\varphi_1(a_1)=\left(
\begin{array}{cccc}
 0 & -1 & 0 & 1 \\
 1 & 0 & -1 & 0 \\
 0 & -1 & 0 & 1 \\
 1 & 0 & -1 & 0 \\
\end{array}
\right),  \quad \varphi_2(a_1)=\left(
\begin{array}{cccc}
 0 & 1 & 0 & 0 \\
 -1 & 0 & 0 & 0 \\
 0 & 0 & 0 & -1 \\
 0 & 0 & 1 & 0 \\
\end{array}
\right), \quad \varphi_1(a_2)=0=\varphi_2(a_2).\]

These extensions are not AW-related: if they were, there would be an automorphism $f_g\colon\R^4\to\R^4$ such that $f_g\varphi_2(a_1)f_g^{-1}$ commutes with $\varphi_1(a_2)$. Since both $\varphi_1(a_1)$ and $f_g\varphi_2(a_1)f_g^{-1}$ lie in $\su(1,1)$, and the centralizer of $\varphi_1(a_1)$ in $\su(1,1)$ is $1$-dimensional, the two elements can only commute if they are proportional, which is absurd because one is invertible and the other one is nilpotent. Thus, the two extensions are not AW-related; however, each of them is trivial, i.e.\ AW-related to a direct product.
\end{remark}

\begin{remark}
Given pseudo-K\"ahler extensions $\g\rtimes_\varphi\lie h$, $\g\rtimes_{\varphi'}\lie h$, the corresponding simply connected Lie groups may be isometric even if the extensions are not AW-related. For instance we will see in Example~\ref{ex:isometric_not_AW-related} that one can construct a non-trivial pseudo-K\"ahler extension $\tilde \g=\g\rtimes_\varphi\lie h$ which is flat and $2$-step nilpotent. The direct product is also flat and $2$-step nilpotent. By \cite{Guediri1994}, the $2$-step nilpotent condition implies that the two corresponding Lie groups are geodesically complete, and  being flat, they are globally isometric. However, the extensions are not AW-related because $\tilde \g=\g\rtimes_\varphi\lie h$ is non-trivial. Moreover, if one considers the equivalence relation generated by AW-relatedness, every pseudo-K\"ahler extension in the same equivalence class as the direct product $\g\times\frh$ is trivial; hence, $\tilde\g$ and $\g\times\frh$ are in different equivalence classes.
\end{remark}

\section{Examples of dimension 6 and 8}\label{sec:examples}

In this section we describe several explicit examples of pseudo-K\"ahler Lie algebras constructed using Theorem~\ref{thm:construction}. We are interested in particular in dimensions $6$ and $8$, although some of these examples can be also generalized to arbitrary dimensions. We will distinguish the cases in which the Lie algebra $\frh$ is abelian and those where it is not; we will see that with this method one can produce non-flat examples starting with flat Lie algebras.

\subsection{Case of abelian \texorpdfstring{$\mathfrak{h}$}{h}}

In the following examples, we take $\frh$ to be abelian; in the first three, $\frg$ is also abelian.

\begin{example}
    Let $\frh=\R^2=\escal{a_1,a_2}$ be the abelian K\"ahler Lie algebra with the Euclidean metric and complex structure $J_ha_1=a_2$, and let $\frg=\R^4=\escal{e_1,e_2,e_3,e_4}$ be a pseudo-K\"ahler abelian Lie algebra with complex structure and metric given by $$J_ge_1=e_2,J_ge_3=e_4\quad\text{and}\quad g=e^1\otimes e^1+e^2\otimes e^2-e^3\otimes e^3-e^4\otimes e^4.$$

    We define the representation $\varphi:\frh\to\Der(\frg)=\Mat_4(\R)$ by $$\varphi(a_1)=\left(\begin{array}{rrrr}
        1 & 1 & 1 & 1 \\
        1 & -1 & 1 & -1 \\
        -1 & -1 & -1 & -1 \\
        -1 & 1 & -1 & 1
    \end{array}\right)\quad\text{and}\quad\varphi(a_2)=\left(\begin{array}{rrrr}
        -1 & 1 & -1 & 1 \\
        1 & 1 & 1 & 1 \\
        1 & -1 & 1 & -1 \\
        -1 & -1 & -1 & -1
    \end{array}\right).$$

    The map $\varphi$ defined in this way satisfies the conditions in Theorem~\ref{thm:construction} and $\varphi(a_1)$ and $\varphi(a_2)$ commute, hence $(\tilde\frg,\tilde g,\tilde J)$ is a $6$-dimensional pseudo-K\"ahler Lie algebra. Furthermore, the matrices $\varphi(a_1)$ and $\varphi(a_2)$ satisfy $\varphi(a_1)^2=\varphi(a_2)^2=\varphi(a_1)\varphi(a_2)=0$. Then, by Remark~\ref{remark:2-step}, $\tilde\frg$ is $2$-step nilpotent. Moreover, a computation shows that the metric $\tilde g$ is non-flat.
\end{example}

\begin{remark}
    Notice that every pseudo-K\"ahler metric on a nilpotent Lie algebra is Ricci-flat by \cite[Lemma~6.4]{FinoPartonSalamon}.
\end{remark}

\begin{example}
    Let $\frh=\frg=\R^4$ both with complex structure and metric given by $$J_ge_1=e_3,J_ge_2=e_4\quad\text{and}\quad g=e^1\otimes e^1-e^2\otimes e^2+e^3\otimes e^3-e^4\otimes e^4.$$

    We define the representation $\varphi:\frh\to\Der(\frg)$ by $$\varphi(a_1)=\varphi(a_2)=\left(\begin{array}{rrrr}
        1 & 2 & -1 & 0 \\
        0 & 1 & 0 & 1 \\
        1 & 0 & -1 & 0 \\
        0 & -1 & 2 & -1
    \end{array}\right),\quad\varphi(a_3)=\varphi(a_4)=\left(\begin{array}{rrrr}
        0 & 1 & 2 & 1 \\
        1 & 0 & -1 & 0 \\
        0 & 1 & 0 & 1 \\
        -1 & 2 & 1 & 0
    \end{array}\right).$$

    The map $\varphi$ defined like this satisfies the conditions of Theorem~\ref{thm:construction}, so we have a pseudo-K\"ahler structure on $\tilde\frg$. Moreover, since the derived algebra of $\tilde\frg$ is given by $[\tilde\frg,\tilde\frg]=\escal{e_1,e_2,e_3,e_4}$, $\tilde\frg$ is $2$-step solvable. The metric $\tilde g$ in this example is also non-flat.
\end{example}

\begin{example}
    Let $\frh=\R^2$ be the abelian K\"ahler Lie algebra with metric $h=-\mathbbm{1}_2$ and complex structure $J_ha_1=a_2$. Let $\frg=\R^6$ be the abelian pseudo-K\"ahler Lie algebra with metric $$g=e^1\otimes e^1+e^2\otimes e^2+e^3\otimes e^3+e^4\otimes e^4-e^5\otimes e^5-e^6\otimes e^6$$ and complex structure $J_ge_{2j-1}=e_{2j}$ for $j=1,2,3$. Consider the representation $\varphi:\frh\to\Der(\frg)$ given by $$\varphi(a_1)=\left(\begin{array}{rrrrrr}
        1 & 0 & 0 & 0 & 0 & 1 \\
        0 & -1 & 0 & 0 & 1 & 0 \\
        0 & 0 & 0 & 0 & 0 & 0 \\
        0 & 0 & 0 & 0 & 0 & 0 \\
        0 & -1 & 0 & 0 & 1 & 0 \\
        -1 & 0 & 0 & 0 & 0 & -1
    \end{array}\right),\quad\varphi(a_2)=\left(\begin{array}{rrrrrr}
        0 & 1 & 0 & 0 & -1 & 0 \\
        1 & 0 & 0 & 0 & 0 & 1 \\
        0 & 0 & 0 & 0 & 0 & 0 \\
        0 & 0 & 0 & 0 & 0 & 0 \\
        1 & 0 & 0 & 0 & 0 & 1 \\
        0 & -1 & 0 & 0 & 1 & 0
    \end{array}\right).$$

    The map $\varphi$ satisfies the conditions of Theorem~\ref{thm:construction}. Moreover, $\varphi(a_1)^2=\varphi(a_2)^2=\varphi(a_1)\varphi(a_2)=0$. Then, by Remark~\ref{remark:2-step}, $\tilde\frg$ is $2$-step nilpotent. This metric is also non-flat.
\end{example}

Next we consider some cases where $\frg$ is non-abelian, and hence the space of derivations is more restricted.

\begin{example}\label{ex:8-dim-3-step}
    Let $\frh=\R^2=\escal{a_1,a_2}$ be the abelian K\"ahler Lie algebra with the Euclidean metric and complex structure $J_ha_1=a_2$. Let $(\frg,g,J_g)$ be the $6$-dimensional pseudo-K\"ahler Lie algebra taken from \cite[Section~3.1]{Smo13}, where $\frg$ is a $3$-step nilpotent Lie algebra with non-zero brackets $$[e_1,e_2]=e_4,\quad[e_2,e_3]=e_6,\quad[e_2,e_4]=e_5.$$

    The complex structure and the metric are given by $$J_ge_1=e_2, J_ge_3=e_4, J_ge_5=e_6\quad\text{and}\quad g=-e^1\odot e^5-e^2\odot e^6-e^3\otimes e^3-e^4\otimes e^4,$$ where $e^i\odot e^j:=e^i\otimes e^j+e^j\otimes e^i.$\medskip

    We define the representation $\varphi:\frh\to\Der(\frg)$ by $$\varphi(a_1)e_1=\varphi(a_2)e_2=xe_5+ye_6,\quad\varphi(a_1)e_2=-\varphi(a_2)e_1=ye_5-xe_6,$$ where $x,y\in\R$, and $\varphi(a_1)e_j=\varphi(a_2)e_j=0$ for $j=3,4,5,6$. The map $\varphi$ defined in this way satisfies the conditions in Theorem~\ref{thm:construction} and $\varphi(a_1)$ and $\varphi(a_2)$ commute, hence $(\tilde\frg,\tilde g,\tilde J)$ is an $8$-dimensional pseudo-K\"ahler Lie algebra. Furthermore, the matrices $\varphi(a_1)$ and $\varphi(a_2)$ satisfy $\varphi(a_1)^2=\varphi(a_2)^2=\varphi(a_1)\varphi(a_2)=0$. Since $[\tilde\frg,\tilde\frg]=\escal{e_4,e_5,e_6}=[\frg,\frg]$, $\tilde\frg$ is $3$-step nilpotent. This metric is non-flat if $x\neq0$ or $y\neq0$.
\end{example}

\begin{example}\label{ex:8-dim-2-step}
    Let $\frh=\R^2=\escal{a_1,a_2}$ be the abelian K\"ahler Lie algebra with the Euclidean metric and complex structure $J_ha_1=a_2$. Let $(\frg,g,J_g)$ be the $6$-dimensional pseudo-K\"ahler Lie algebra taken from \cite[Section~6.2]{Smo13}, where $\frg$ is a $2$-step nilpotent Lie algebra with non-zero brackets $$[e_1,e_3]=-e_5,\quad[e_2,e_3]=e_6.$$

    The complex structure and the metric are given by $$J_ge_1=-e_2,J_ge_3=e_4,J_ge_5=e_6\quad\text{and}\quad g=e^1\odot e^5-e^2\odot e^6+e^3\otimes e^3+e^4\otimes e^4.$$

    We define the representation $\varphi:\frh\to\Der(\frg)$ by $$\varphi(a_1)e_1=-\varphi(a_2)e_2=xe_5-ye_6,\quad\varphi(a_1)e_2=\varphi(a_2)e_1=ye_5+xe_6,$$ where $x,y\in\R$, and $\varphi(a_1)e_j=\varphi(a_2)e_j=0$ for $j=3,4,5,6$. The map $\varphi$ defined in this way satisfies the conditions in Theorem~\ref{thm:construction} and $\varphi(a_1)$ and $\varphi(a_2)$ commute, hence $(\tilde\frg,\tilde g,\tilde J)$ is an $8$-dimensional pseudo-K\"ahler Lie algebra. Furthermore, the matrices $\varphi(a_1)$ and $\varphi(a_2)$ satisfy $\varphi(a_1)^2=\varphi(a_2)^2=\varphi(a_1)\varphi(a_2)=0$. Since $[\tilde\frg,\tilde\frg]=\escal{e_5,e_6}=[\frg,\frg]$, $\tilde\frg$ is $2$-step nilpotent. The metric $\tilde g$ is non-flat if and only if $x^2+y^2\neq\frac{1}{2}$.\medskip

    The $6$-dimensional pseudo-K\"ahler metric $g$ on $\frg$ defined above is non-flat, so it is interesting to notice that we can choose some $x$ and $y$ on the representation $\varphi$ such that, starting with a non-flat metric, the metric $\tilde g$ on $\tilde\frg$ is flat.
\end{example}

Note that Example~\ref{ex:8-dim-3-step} and Example~\ref{ex:8-dim-2-step} satisfy $\varphi(\frh)\frg\subseteq[\frg,\frg]$ and $[\tilde\frg,\tilde\frg]=[\frg,\frg]$. In the following example this is not the case.

\begin{example}
    We consider the same setting as in Example~\ref{ex:8-dim-2-step}, but we define the representation $\varphi$ as follows $$\varphi(a_1)=\left(\begin{array}{rrrrrr}
        0 & 0 & 0 & 0 & 0 & 0 \\
        0 & 0 & 0 & 0 & 0 & 0 \\
        0 & 0 & 0 & 0 & 0 & 0 \\
        x_{1} & x_{2} & 0 & 0 & 0 & 0 \\
        x_{3} & x_{4} & x_{2} & 0 & 0 & 0 \\
        x_{4} & x_{3} & x_{1} & 0 & 0 & 0
    \end{array}\right),\quad\varphi(a_2)=\left(\begin{array}{rrrrrr}
        0 & 0 & 0 & 0 & 0 & 0 \\
        0 & 0 & 0 & 0 & 0 & 0 \\
        0 & 0 & 0 & 0 & 0 & 0 \\
        x_{2} & -x_{1} & 0 & 0 & 0 & 0 \\
        0 & -2 \, x_{3} & -x_{1} & 0 & 0 & 0 \\
        0 & 0 & x_{2} & 0 & 0 & 0
    \end{array}\right),$$ with $x_1,x_2,x_3,x_4\in\R$. Note that in this case $[\frg,\frg]\subsetneq[\tilde\frg,\tilde\frg]=\escal{e_4,e_5,e_6}$. Nevertheless, $\tilde\frg$ is still $2$-step nilpotent.
\end{example}

\subsection{Case of non-abelian \texorpdfstring{$\mathfrak{h}$}{h}}\label{subsection:non-abelian_h}

First of all, note that $\tilde\nabla_AB=\nabla^h_AB$ for all $A,B\in\frh$ by Lemma~\ref{lemma:tilde_nabla_XY}. This implies that $\tilde R(A,B)C=R^h(A,B)C$, where $R^h$ is the curvature of the metric $h$. Hence, if the metric $h$ is non-flat, so is $\tilde g$.

\begin{example}
    Consider $\frh=\mathfrak{r}'_2$ the $4$-dimensional $2$-step solvable Lie algebra with non-zero Lie brackets $$[a_1,a_3]=a_3,\quad[a_1,a_4]=a_4,\quad[a_2,a_3]=a_4,\quad[a_2,a_4]=-a_3.$$

    This is the real Lie algebra underlying on the complex Lie algebra $\mathfrak{aff}(\C)$. It admits a non-flat pseudo-K\"ahler structure with complex structure $J_ha_1=-a_2,J_ha_3=a_4$ (see \cite[Theorem~4.6]{Ova06}). Consider $\frg=\mathfrak{rh}_3$, with the pseudo-K\"ahler structure
    $$J_ge_1=e_2,J_ge_3=e_4\quad\text{and}\quad g=e^1\odot e^3+e^2\odot e^4-e^1\otimes e^1-e^2\otimes e^2,$$
also from \cite{Ova06}. Define a representation $\varphi:\frh\to\Der(\frg)$ by \begin{align*}
        \varphi(a_1)&=\left(\begin{array}{rrrr}
            0 & 0 & 0 & 0 \\
            0 & 0 & 0 & 0 \\
            -\frac{1}{2} \, y_{1} - \frac{1}{2} \, y_{2} & x_{1} & 0 & 0 \\
            x_{2} & \frac{1}{2} \, y_{1} + \frac{1}{2} \, y_{2} & 0 & 0
        \end{array}\right),\\
        \varphi(a_2)&=\left(\begin{array}{rrrr}
            0 & 0 & 0 & 0 \\
            0 & 0 & 0 & 0 \\
            \frac{1}{2} \, x_{1} + \frac{1}{2} \, x_{2} & y_{1} & 0 & 0 \\
            y_{2} & -\frac{1}{2} \, x_{1} - \frac{1}{2} \, x_{2} & 0 & 0
        \end{array}\right)
    \end{align*} and $\varphi(a_3)=\varphi(a_4)=0$, with $x_1,x_2,y_1,y_2\in\R$. This map satisfies the conditions of Theorem~\ref{thm:construction}. The derived algebra is given by $[\tilde\frg,\tilde\frg]=\escal{e_3,e_4,a_3,a_4}$, so $\tilde\frg$ is $2$-step solvable. Moreover, since $h$ is non-flat, so is $\tilde g$.
\end{example}

\begin{example}
    Let $\frh=\mathfrak{rr}_{3,0}$ be the $4$-dimensional Lie algebra with non-zero Lie bracket $[a_1,a_2]=a_2$ and complex structure $J_ha_1=a_2$, $J_ha_3=a_4$ (see \cite{Ova06}). Let $\frg=\R^2$ be the abelian K\"ahler Lie algebra with the Euclidean metric and complex structure $J_ge_1=e_2$. We define the representation $\varphi:\frh\to\Der(\frg)=\Mat_2(\R)$ by $$\varphi(a_1)=\left(\begin{array}{rr}
        0 & \frac{1}{2} \\
        \frac{1}{2} & 0
    \end{array}\right),\quad\varphi(a_2)=\left(\begin{array}{rr}
        -\frac{1}{2} & \frac{1}{2} \\
        -\frac{1}{2} & \frac{1}{2}
    \end{array}\right),\quad\varphi(a_3)=\varphi(a_4)=0.$$

    The Lie algebra $\frh$ is $2$-step solvable and non-unimodular. The derived series of the Lie algebra $\tilde\frg$ is given by $$\tilde\frg^{(1)}=[\tilde\frg,\tilde\frg]=\escal{e_1,e_2,a_2},\quad\tilde\frg^{(2)}=\escal{e_1+e_2},\quad\tilde\frg^{(3)}=0.$$

    Hence the Lie algebra $\tilde\frg$ is $3$-step solvable and non-unimodular. Since $h$ is non-flat, so is $\tilde g$. Furthermore, one can check that $\tilde g$ is not Ricci-flat.
\end{example}

\begin{example}
    Let $\frh=\mathfrak{rr}_{3,0}$ be as above and let $\frg=\R^4$ be the abelian K\"ahler Lie algebra with the Euclidean metric and complex structure $J_ge_1=e_2,J_ge_3=e_4$. We define the representation $\varphi:\frh\to\Der(\frg)=\Mat_4(\R)$ by $$\varphi(a_1)=\left(\begin{array}{rrrr}
        0 & \frac{1}{2} & 0 & 0 \\
        \frac{1}{2} & 0 & 0 & 0 \\
        0 & 0 & 0 & \frac{1}{2} \\
        0 & 0 & \frac{1}{2} & 0
    \end{array}\right),\quad \varphi(a_2)=\left(\begin{array}{rrrr}
        -\frac{1}{2} & \frac{1}{2} & 0 & 0 \\
        -\frac{1}{2} & \frac{1}{2} & 0 & 0 \\
        0 & 0 & -\frac{1}{2} & \frac{1}{2} \\
        0 & 0 & -\frac{1}{2} & \frac{1}{2}
    \end{array}\right),$$ $$\varphi(a_3)=\left(\begin{array}{rrrr}
        0 & 0 & x & 0 \\
        0 & 0 & 0 & x \\
        -x & 0 & 0 & 0 \\
        0 & -x & 0 & 0
    \end{array}\right),\quad \varphi(a_4)=\left(\begin{array}{rrrr}
        0 & 0 & y & 0 \\
        0 & 0 & 0 & y \\
        -y & 0 & 0 & 0 \\
        0 & -y & 0 & 0
    \end{array}\right),$$ with $x,y\in\R$. The Lie algebra $\frh$ is $2$-step solvable and non-unimodular. Hence we obtain that $\tilde\frg$ is $3$-step solvable and non-unimodular. Moreover, it has $1$-dimensional center given by $\frz(\tilde\frg)=\escal{ya_3-xa_4}$, thus the center is not $\tilde J$-invariant. As in the above example, the metric $\tilde g$ is non-flat.
\end{example}

\begin{example}\label{ex:isometric_not_AW-related}
    Let us consider $\frg=\mathfrak{rh}_3$ with the pseudo-Kähler structure $(g,J_g)$ given by $$g=e^1\odot e^3+e^2\odot e^4\quad\text{and}\quad Je_1=e_2,Je_3=e_4.$$ We also take $\frh=\mathfrak{rh}_3$ with $h=g$ and $J_h=J_g$. Define the representation $\varphi:\frh\to\Der(\frg)$ by $$\varphi(a_1)=\left(\begin{array}{rrrr}
        0 & 0 & 0 & 0 \\
        0 & 0 & 0 & 0 \\
        1 & 0 & 0 & 0 \\
        0 & -1 & 0 & 0
    \end{array}\right),\quad\varphi(a_2)=\left(\begin{array}{rrrr}
        0 & 0 & 0 & 0 \\
        0 & 0 & 0 & 0 \\
        0 & 1 & 0 & 0 \\
        1 & 0 & 0 & 0
    \end{array}\right),\quad\varphi(a_3)=\varphi(a_4)=0.$$

    We have that $[\varphi(a_1),\varphi(a_2)]=0$, $\varphi(a_1)^a=\varphi(a_2)^a=0$ and the Lie algebra $\tilde\g$ is 2-step nilpotent. One can check that the metric $g$ is flat and the metric $\tilde g$ is also flat.
\end{example}

\section{Classification on 6-dimensional semidirect products \texorpdfstring{$\mathfrak{g}\rtimes\mathbb{R}^2$}{g+R2}}\label{sec:classification}

In this section we consider the special case where $\frg$ has dimension $4$ and $\lie h$ is $2$-dimensional and abelian. We exploit the classification of pseudo-K\"ahler Lie algebras of dimension $4$ of \cite{Ova06} and classify all the cases where the hypotheses of Theorem~\ref{thm:construction} are satisfied up to AW-relatedness, as defined in Definition~\ref{def:equivalence}.\medskip

Beside AW-relatedness, we have an obvious symmetry, which is an overall change of sign of the metric on the extension. In light of this symmetry, it makes sense to fix the signature on the $2$-dimensional factor to be positive definite. Thus, throughout the section, we will fix $\frh=\R^2$, with a basis $\{a_1,a_2\}$ such that \begin{equation}\label{eqn:kahleronR2}
    J_ha_1=a_2, \quad h=a^1\otimes a^1+a^2\otimes a^2.
\end{equation}

Since $J_h$ is fixed, we will drop the subscript $g$ in $J_g$ and denote the complex structure on $\g$ by $J$.

\begin{lemma}
\label{lemma:AB1B2}
Let $(\frg,g,J)$ be a pseudo-K\"ahler Lie algebra. Then its pseudo-K\"ahler extensions by an abelian $2$-dimensional Lie algebra $(\frh, h,J_h)$ up to AW-relatedness are in one-to-one-correspondence with triples $(A,B_1,B_2)$ of linear maps $\g\to\g$ such that
\[A=A^*, AJ=-JA, B_i=-B_i^*, B_iJ=JB_i, \]
\[A+B_1, JA+B_2\in\Der(\g)\]
and
\begin{equation}
\label{eqn:symmetricpartofabeliancondition}
[A,B_2]=J[A,B_1],\\
\end{equation}
\begin{equation}
\label{eqn:skewsymmetricpartofabeliancondition}
[B_1,B_2]=2JA^2.
\end{equation}
\end{lemma}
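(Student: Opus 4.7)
The plan is to decompose $\varphi\colon\frh\to\Der(\frg)$ into the data of the two operators $\varphi_i := \varphi(a_i) = \varphi_i^s+\varphi_i^a$ (for $i=1,2$), set $A := \varphi_1^s$ and $B_i := \varphi_i^a$, and then translate each hypothesis into algebraic conditions on the triple $(A,B_1,B_2)$.

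First I would apply the two conditions of Theorem~\ref{thm:construction} at $a_1$ and at $a_2$, using $J_ha_1 = a_2$. The symmetric-part condition at $a_1$ gives $\varphi_2^s = JA$, while at $a_2$ it reduces to the automatic identity $J^2A=-A$. Since $\varphi_2^s$ is $g$-symmetric whereas $(JA)^* = -AJ$, the equation $\varphi_2^s = JA$ forces $AJ=-JA$. The antisymmetric-part conditions at $a_1$ and $a_2$ immediately give $JB_i = B_iJ$. Hence the extension determines a triple with the stated symmetry and $J$-commutation properties, with $\varphi_2 = JA+B_2$; the derivation hypothesis on $\varphi_1$ and $\varphi_2$ then becomes $A+B_1,\,JA+B_2\in\Der(\frg)$.

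Next I would translate the abelian condition $[\varphi_1,\varphi_2]=0$, which is equivalent to $\varphi$ being a Lie algebra homomorphism since $\frh$ is abelian. Expanding and using $AJ=-JA$ together with $JB_1=B_1J$, one checks that $[A,JA]=-2JA^2$ and $[B_1,JA]=-J[A,B_1]$, so the commutator reduces to
\[
-2JA^2 + [A,B_2] - J[A,B_1] + [B_1,B_2] = 0.
\]
The key observation is that this equation splits canonically under the $g$-adjoint: since $A$ anticommutes with $J$, $A^2$ is self-adjoint and commutes with $J$, so $JA^2$ is antisymmetric; $[B_1,B_2]$ is antisymmetric as a bracket of antisymmetric operators; $[A,B_2]$ is symmetric as a bracket of a symmetric and an antisymmetric operator; and a short calculation using $AJ=-JA$ and $JB_1=B_1J$ yields $J[A,B_1] = -[A,B_1]J$, which makes $J[A,B_1]$ self-adjoint. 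Setting the symmetric and antisymmetric components of the bracket equation to zero separately then produces exactly \eqref{eqn:symmetricpartofabeliancondition} and \eqref{eqn:skewsymmetricpartofabeliancondition}.

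The converse then consists in starting from any triple $(A,B_1,B_2)$ satisfying the listed properties, defining $\varphi_1:=A+B_1$, $\varphi_2:=JA+B_2$, and running the above computations in reverse to obtain a representation $\varphi$ satisfying the hypotheses of Theorem~\ref{thm:construction}. Since the basis $\{a_1,a_2\}$ of $\frh$ and its pseudo-K\"ahler structure have been fixed at the outset of the section, the assignment $\varphi\mapsto(A,B_1,B_2)$ is manifestly a bijection, with no further quotienting by AW-relatedness required. The only real obstacle is the careful bookkeeping of signs and parities in the commutator identity so that each summand is correctly allocated to the symmetric or antisymmetric piece.
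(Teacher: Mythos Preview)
Your argument is correct and follows essentially the same route as the paper: decompose $\varphi(a_1)=A+B_1$, $\varphi(a_2)=JA+B_2$ using Theorem~\ref{thm:construction}, derive $AJ=-JA$ from the symmetry of $JA$, and then split the vanishing commutator $[\varphi(a_1),\varphi(a_2)]$ into its $g$-symmetric and $g$-antisymmetric parts to obtain \eqref{eqn:symmetricpartofabeliancondition} and \eqref{eqn:skewsymmetricpartofabeliancondition}. Your parity bookkeeping is in fact slightly more explicit than the paper's, but the underlying computation is identical.
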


\begin{proof}
Suppose $\varphi$ defines a pseudo-K\"ahler extension; by the conditions of Theorem~\ref{thm:construction}, we can write
\[\varphi(a_1)=A+B_1, \quad \varphi(a_2)=JA+B_2,\]
where $A$ is $g$-symmetric and $B_1$, $B_2$ are $g$-antisymmetric and commute with $J$, and the notation $JA$ represents composition, i.e.\ $(JA)(X)$ stands for $J(A(X))$. In addition, we have that $JA$ is $g$-symmetric, and therefore $JA=A^*J^*=-AJ$.\medskip

Since $\frh$ is abelian, imposing that $\varphi$ is a homomorphism implies that $\varphi(a_1)$ and $\varphi(a_2)$ commute, i.e.\
\[0=[A+B_1,JA+B_2]=[A,JA]+[A,B_2]-[JA,B_1]+[B_1,B_2];\]
taking the $g$-antisymmetric part, we find
\[0=[A,B_2]-[JA,B_1]=[A,B_2]-JAB_1+B_1JA=[A,B_2]-JAB_1+JB_1A
=[A,B_2]-J[A,B_1].\]
Taking the $g$-symmetric part and recalling that $JA=-AJ$, we find
\[0=[A,JA]+[B_1,B_2]=AJA-JAA+[B_1,B_2]=-2JA^2+[B_1,B_2].\qedhere\]
\end{proof}

We will study the abelian case first. We will distinguish two cases, according to whether $A$ is semisimple; we will also assume that $A$ is nonzero, since otherwise the extension is trivial.

\begin{lemma}
\label{lemma:semisimple}
Let $(g,J)$ be a pseudo-K\"ahler structure on $\R^4$. Let $A$, $B_1$, $B_2$ be as in Lemma~\ref{lemma:AB1B2}. Assume that $A$ is nonzero and semisimple. Then there is a basis $e_1,\dotsc, e_4$ of $\R^4$ such that
\begin{gather*}
g=e^1\otimes e^1-e^2\otimes e^2+e^3\otimes e^3-e^4\otimes e^4, \quad Je_1=e_3, Je_2=e_4,\\
A=\begin{pmatrix}
a& a & 0 & 0 \\
-a & a & 0 & 0 \\
0 & 0 & -a & -a \\
0 & 0 & a & -a
\end{pmatrix}, \quad
B_1=\begin{pmatrix}
0 & k_1 & - k_2 & 0 \\
k_1& 0  & 0 & k_2\\
k_2 & 0 & 0 & k_1\\
0 & - k_2 & k_1 & 0
\end{pmatrix}, \quad
B_2=\begin{pmatrix}
0 & k_2 &  k_1 & 0 \\
k_2& 0  & 0 & -k_1\\
-k_1 & 0 & 0 & k_2\\
0 &  k_1 & k_2 & 0
\end{pmatrix},
\end{gather*}
where $k_1,k_2,a$ are real numbers such that $k_1^2+k_2^2=2a^2$.
\end{lemma}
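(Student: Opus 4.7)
The plan is to first pin down the spectrum of $A$ using the relations of Lemma~\ref{lemma:AB1B2}, use this to put $(g,J,A)$ in normal form, and finally solve for $B_1,B_2$. I view $V = \R^4$ as a complex $2$-space via $J$, so that $A$ becomes $\C$-antilinear and $B_1,B_2 \in \lie{u}(p,q)$ are anti-Hermitian. Since the commutator of two anti-Hermitian operators lies in $\lie{su}(p,q)$ and $J$ acts on $V$ (with its $J$-complex structure) as multiplication by $i$, the relation $[B_1,B_2]=2JA^2$ gives $\Tr_\C(A^2)=0$, so $A^2$ has $\C$-eigenvalues $\pm\alpha$ with $\alpha\ne 0$ (nonzero because $A$ is semisimple and nonzero).

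The structural heart of the argument is showing $\alpha$ is purely imaginary. Any $\C$-antilinear operator on a $1$-dimensional complex space takes the form $v\mapsto \lambda\bar v$, so its square is the non-negative real scalar $|\lambda|^2\operatorname{id}$. If $\alpha$ were real then $A$ would preserve each of the two $1$-dimensional $\C$-eigenspaces of $A^2$ (since $A(A^2 v)=\bar\alpha\,Av = \alpha\,Av$), and applying the preceding observation to each would force both $\alpha$ and $-\alpha$ to be non-negative: impossible. Reality of $A$ then forces $\bar\alpha=-\alpha$, i.e.\ $\alpha = i\beta$ with $\beta > 0$ (up to swapping $\pm\alpha$). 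Setting $a := \sqrt{\beta/2}$, the eigenvalues of $A$ are the four distinct non-real numbers $\pm a(1\pm i)$, so $V$ decomposes over $\R$ as $V_+\oplus V_-$ into two $2$-dimensional $A$-invariant subspaces; the relations $AJ=-JA$ and $A=A^*$ respectively give $JV_+=V_-$ and $V_+\perp V_-$. On $V_+$, the operator $J' := (A|_{V_+} - a\operatorname{id})/a$ is a $g$-symmetric complex structure satisfying $g(J'\cdot,J'\cdot)=-g$, which forces $g|_{V_+}$ and hence $g$ to have neutral signature.

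It then remains to pick $e_1\in V_+$ with $g(e_1,e_1)=1$ and $g(e_1,J'e_1)=0$: a short calculation in a $g$-orthonormal basis of $V_+$ shows such an $e_1$ always exists. Setting $e_2 := -J'e_1$, $e_3 := Je_1$, $e_4 := Je_2$ realizes the stated matrices for $g$, $J$ and $A$ (the $A$-block on $V_-$ being determined by $AJ=-JA$). With this normal form in hand, I write $B_i = \bigl(\begin{smallmatrix} B_i^{(11)} & B_i^{(12)} \\ -B_i^{(12)} & B_i^{(11)}\end{smallmatrix}\bigr)$ with $B_i^{(11)}$ $G$-antisymmetric and $B_i^{(12)}$ $G$-symmetric (where $G := \operatorname{diag}(1,-1)$); the equation $[A,B_2]=J[A,B_1]$ decomposes into two block identities which, upon direct expansion, force $B_i^{(12)}$ to be a scalar multiple of $G$ and express $B_i^{(11)}$ in terms of $B_{3-i}^{(12)}$, leaving two free parameters $k_1,k_2$ reproducing the stated matrices. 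The remaining equation $[B_1,B_2]=2JA^2$ reduces to the single scalar identity $k_1^2+k_2^2=2a^2$. The main conceptual obstacle is the antilinearity-based argument ruling out real $\alpha$; the rest is essentially bookkeeping once the $V_\pm$ decomposition is in place.
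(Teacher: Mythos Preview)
Your proof is correct and takes a genuinely different route from the paper's. The paper first obtains the $A$-invariant splitting $\R^4=V_+\oplus V_-$ with $JV_+=V_-$ by a case analysis on the eigenvalues of $A$ (real, purely imaginary, or generic complex), then writes everything in block form, and only afterwards extracts the constraint $\Tr D^2=0$ via auxiliary operators $K_i=H_i\mp U_{3-i}$ and the identity $[K_1,K_1^*]+[K_2,K_2^*]=-4D^2$. You instead front-load the spectral information: viewing $V$ as $\C^2$ via $J$, the relation $[B_1,B_2]=2JA^2\in\lie{su}(p,q)$ immediately gives $\Tr_\C A^2=0$, and your antilinearity argument (any $\C$-antilinear map on a complex line squares to a nonnegative real) cleanly forces the $\C$-eigenvalues of $A^2$ to be purely imaginary. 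This pins down the real spectrum of $A$ as $\{\pm a(1\pm i)\}$ without any case split, after which the splitting $V_+\oplus V_-$ is automatic and the block analysis for $B_1,B_2$ proceeds just as in the paper. Your approach is more conceptual and shorter; the paper's has the virtue that the decomposition \eqref{eqn:Ainvariantspaces} is established under weaker hypotheses (before the trace constraint is used). One phrasing quibble: the step ``reality of $A$ then forces $\bar\alpha=-\alpha$'' is really the observation that $A$, being $\C$-antilinear and invertible, carries the $\alpha$-eigenspace of $A^2$ into the $\bar\alpha$-eigenspace, so $\bar\alpha\in\{\pm\alpha\}$; combined with $\alpha\notin\R$ this gives the conclusion.
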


\begin{proof}
We first show that there is an $A$-invariant, $g$-orthogonal decomposition
\begin{equation}
\label{eqn:Ainvariantspaces}
\R^4=V_+\oplus V_-,  \quad J(V_+)=V_-, \quad \dim V_\pm=2.
\end{equation}
Indeed, denote by $V_\lambda$ the eigenspaces of $A$ over $\R$, and by $W_\mu$ its eigenspaces over $\C$.
Since $A$ is symmetric, we have an orthogonal decomposition
\[\R^4=\bigoplus_{\lambda} V_\lambda \oplus \bigoplus_{\mu} [\![W_\mu]\!],\]
where $\lambda$ ranges among real eigenvalues and $\mu$ among nonreal eigenvalues, the notation $[\![W_\mu]\!]$ representing the space of real vectors in $W_\mu\oplus W_{\overline\mu}$. Since $J$ anticommutes with $A$, it maps each $V_\lambda$ to $V_{-\lambda}$ and $[\![W_\mu]\!]$ to $[\![W_{-\mu}]\!]$.\medskip

If all eigenvalues of $A$ are real, we can obtain \eqref{eqn:Ainvariantspaces} by fixing an eigenvector $v$ and an eigenvector $w$ not contained in $\Span{v,Jv}$, then setting $V_+=\Span{v,w}$.\medskip

If $A$ has a purely imaginary eigenvalue $\mu\neq0$, then its restriction to $[\![W_\mu]\!]$ is $g$-symmetric, but not diagonalizable; by the spectral theorem, this implies that the restriction of $g$ to $[\![W_{\mu}]\!]$ is indefinite. Since $[\![W_{\mu}]\!]$ is preserved by $J$, it has an indefinite $\mathrm{U}(p,q)$-structure defined by $g$ and $J$, which for dimensional reasons is only possible if $p=q=1$. Thus, the purely imaginary eigenvalue $\mu$ has multiplicity two. Since the conditions of Lemma~\ref{lemma:semisimple} imply that $JA$ is $g$-symmetric, we can define a scalar product
\[(v,w)\mapsto\langle v,w\rangle= g(v, JAw),\]
which cannot be definite, for otherwise the $\langle,\rangle$-symmetric operator $J$ would have real eigenvalues by the spectral theorem. Thus, there exists a nonzero $v$ which is lightlike relative to $\langle,\rangle$, i.e.\ $v$ is $g$-orthogonal to $JAv$. Then we obtain the splitting \eqref{eqn:Ainvariantspaces} by setting
\[V_+=\Span{v,Av}, \quad V_-=\Span{Jv,JAv};\]
the spaces $V_\pm$ are $2$-dimensional because $A$ has no real eigenvalues, and they intersect trivially because $A$ and $J$ anticommute.\medskip

On the other hand, if  an eigenvalue $\mu$ is neither real nor imaginary, then by a dimension count $\mu$ has multiplicity one and $\R^4=[\![W_{-\mu}]\!]\oplus [\![W_{\mu}]\!]$.\medskip

Using \eqref{eqn:Ainvariantspaces} and the fact that $A$ anticommutes with $J$, we can therefore assume that $A$ takes the block form
\[A=\begin{pmatrix} D & 0 \\ 0 & -D \end{pmatrix}\]
relative to a basis such that
\begin{equation}
\label{eqn:basisR4}
Je_1=e_3, Je_2=e_4,\quad  g=e^1\otimes e^1+e^3\otimes e^3 \pm (e^2\otimes
e^2+e^4\otimes e^4).
\end{equation}
Similarly, in the basis satisfying \eqref{eqn:basisR4}, write
\[B_i=\begin{pmatrix} H_i & -U_i \\ U_i & H_i\end{pmatrix}, \quad H_i^*=-H_i, U_i^*=U_i,\]
where  $*$ denotes the metric transpose taken relative to the  metric
\begin{equation}
\label{eqn:metriconR2}
\begin{pmatrix} 1 & 0 \\ 0 & \pm 1\end{pmatrix}.
\end{equation}
Then
\[[A,B_i]=\begin{pmatrix}[D,H_i] & -DU_i-U_iD\\
-DU_i-U_iD  & -[D,H_i]
\end{pmatrix}.
\]
Now \eqref{eqn:symmetricpartofabeliancondition} implies
\[
0=[A,B_2]-J[A,B_1]
=\begin{pmatrix}[D,H_2] & -DU_2-U_2D\\
-DU_2-U_2D  & -[D,H_2]
\end{pmatrix}
-\begin{pmatrix}
DU_1+U_1D  & [D,H_1]\\
[D,H_1] & -DU_1-U_1D\\
\end{pmatrix}.
\]
Therefore
\begin{equation}
\label{eqn:DKsymmetric}
[D,H_2]=DU_1+U_1D , \quad [D,H_1]=-DU_2-U_2D.
\end{equation}
If we set
\[K_1=H_1-U_2, K_2=H_2+U_1,\]
using \eqref{eqn:DKsymmetric} and $K_1^*=-H_1-U_2$, $K_2^*=-H_2+U_1$ we see that
\[K_2D=-DK_2^*, K_1D=-DK_1^*,\]
i.e.\ the $K_iD$ are skew-symmetric.\medskip

Now \eqref{eqn:skewsymmetricpartofabeliancondition} implies that
\[
0=-2JA^2+[B_1,B_2]
=\begin{pmatrix}
0 & 2D^2\\
-2D^2 & 0 \\
\end{pmatrix} +
\begin{pmatrix} [H_1,H_2]-[U_1,U_2] &-[H_1,U_2]+[H_2,U_1] \\
-[H_2,U_1]+[H_1,U_2] & -[U_1,U_2]+[H_1,H_2]\end{pmatrix}.
\]
Thus
\[[H_1,H_2]=[U_1,U_2], \quad [H_1,U_2]-[H_2,U_1]=2D^2,\]
which implies
\begin{equation}
\label{eqn:DandKstar}
[K_1,K_1^*]+[K_2,K_2^*]=-4D^2.
\end{equation}

Thus, $D^2$ has trace zero; as $D$ is semisimple, say with eigenvalues $\lambda_1,\lambda_2$ over $\C$, we see that $\lambda_1^2+\lambda_2^2$ is zero. Since $D$ is a real matrix, this implies that $\lambda_1$ and $\lambda_2$ are complex conjugate, so the only possibility is that $\lambda_2^2=-\lambda_1^2\in i\R$. Thus, $D$ represents a homothety composed with a rotation of angle $\frac\pi2$, i.e.\
\[D=\begin{pmatrix}a& a \\-a & a \end{pmatrix}, \quad a\in\R.\]
In particular, a minus sign appears in the metric \eqref{eqn:metriconR2}, as $D$ is symmetric relative to it. Antisymmetric matrices relative to \eqref{eqn:metriconR2} are multiples of
\[\begin{pmatrix}0&1 \\  1 &0\end{pmatrix};\]
multiplying by $D^{-1}$, we see that $K_iD$ being antisymmetric relative to \eqref{eqn:metriconR2} implies
\[K_i=\begin{pmatrix}k_i& k_i \\  k_i & -k_i\end{pmatrix}, k_i\in\R,
\]
i.e.\
\[U_1=\begin{pmatrix}k_2 & 0 \\ 0 & -k_2\end{pmatrix}, \quad
U_2=\begin{pmatrix}-k_1 & 0 \\ 0 & k_1\end{pmatrix}, \quad H_1=\begin{pmatrix}0 & k_1 \\ k_1&0\end{pmatrix}, \quad H_2=\begin{pmatrix}0 & k_2 \\ k_2&0\end{pmatrix}.\]

By \eqref{eqn:DandKstar}, we see that $k_1^2+k_2^2=2a^2$. Observe that $H_1,H_2$ are linearly dependent, as are $U_1,U_2$, so all conditions are satisfied.
\end{proof}

\begin{proposition}\label{prop:definite_yields_trivial_extension}
Any pseudo-K\"ahler extension of a $4$-dimensional definite K\"ahler Lie algebra $(\lie g,g,J_g)$ by an abelian two-dimensional pseudo-K\"ahler Lie algebra $(\lie h,h,J_h)$ is trivial.
\end{proposition}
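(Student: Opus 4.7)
My plan is to use Lemma \ref{lemma:AB1B2} to encode the extension as a triple $(A, B_1, B_2)$ of endomorphisms of $\g$ and then to show that the $g$-symmetric part $A$ vanishes whenever $g$ is definite. Once this is established, both $\varphi(a_1) = A + B_1$ and $\varphi(a_2) = JA + B_2$ are $g$-antisymmetric, which by the remark immediately following Definition \ref{def:equivalence} is exactly the condition for the extension to be trivial.

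To see that $A = 0$, I would exploit the relation $[B_1, B_2] = 2JA^2$ from Lemma \ref{lemma:AB1B2}. Composing on the left with $J$ and using $J^2 = -\Id$, this becomes $J[B_1, B_2] = -2A^2$. The left-hand side is trace-free: since both $B_1$ and $B_2$ commute with $J$, cyclicity of the trace yields $\Tr(JB_1B_2) = \Tr(B_1 JB_2) = \Tr(JB_2 B_1)$, so $\Tr(J[B_1,B_2]) = 0$. Consequently $\Tr(A^2) = 0$.

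Now $g$ is definite and $A = A^*$, so the spectral theorem supplies an orthonormal basis that diagonalizes $A$ with real eigenvalues $\lambda_1, \dots, \lambda_4$; the identity $\Tr(A^2) = \sum_i \lambda_i^2 = 0$ forces every $\lambda_i$ to vanish, hence $A = 0$ and the extension is trivial. The only step that might qualify as an obstacle is recognizing that one should precompose with $J$ before taking the trace: the naive trace of $[B_1, B_2] = 2JA^2$ only gives $\Tr(JA^2) = 0$, which is automatic because $JA^2$ is $g$-antisymmetric and therefore contributes no information, whereas multiplying by $J$ first converts the right-hand side into the genuine quadratic invariant $-2\Tr(A^2)$, on which definiteness of $g$ gets traction.
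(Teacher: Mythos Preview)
Your argument is correct and genuinely different from the paper's. The paper first passes to the underlying abelian Lie algebra $(\R^4,g,J_g)$ via a linear isomorphism, so that the derivation condition becomes vacuous, and then invokes the structural Lemma~\ref{lemma:semisimple}: in the definite case $A$ is diagonalizable by the spectral theorem, hence semisimple, and Lemma~\ref{lemma:semisimple} forces the metric to have neutral signature, a contradiction. Your route instead stays with the original $\g$ and extracts only the trace identity $\Tr(A^2)=0$ directly from $[B_1,B_2]=2JA^2$ by left-composing with $J$; definiteness then kills $A$ immediately. This is more elementary: it bypasses both the reduction to the abelian Lie algebra and the full case analysis of Lemma~\ref{lemma:semisimple}, isolating exactly the one scalar invariant that matters. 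The paper's approach, on the other hand, gets the result essentially for free once Lemma~\ref{lemma:semisimple} has been established for the subsequent classification, so in context it is the economical choice. It is worth noting that the trace identity you discovered is essentially the same observation that appears inside the proof of Lemma~\ref{lemma:semisimple} (where $\Tr D^2=0$ is deduced from \eqref{eqn:DandKstar}); you have recognized that this single step suffices when $g$ is definite.
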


\begin{proof}
Fix a linear isomorphism $\g\cong\R^4$; then $g$ and $J_g$ define a K\"ahler structure on the abelian Lie algebra $(\R^4,g,J_g)$, where $g$ and $J_g$ are induced by the isomorphism. A pseudo-K\"ahler extension of $(\lie g,g,J_g)$ by $(\lie h,h,J_h)$ determines a pseudo-K\"ahler extension of $(\R^4,g,J_g)$. Define $A,B_1,B_2$ as in Lemma~\ref{lemma:AB1B2}. If $A$ is not zero, then it is diagonalizable by the spectral theorem; applying Lemma~\ref{lemma:semisimple} to $(\R^4,g,J_g)$ we obtain a contradiction, because the metric $g$ is definite. Thus, $A$ is necessarily zero, i.e.\ the extension is trivial.
\end{proof}

\begin{lemma}
\label{lemma:nonsemisimple}
Let $(g,J)$ be a pseudo-K\"ahler structure on $\R^4$. Let $A$, $B_1$, $B_2$ be as in Lemma~\ref{lemma:AB1B2}. Assume that $A$ is not semisimple. Then there is a basis $e_1,\dotsc, e_4$ of $\R^4$ such that
\begin{gather*}
  g=e^1\otimes e^1+e^2\otimes e^2-e^3\otimes e^3-e^4\otimes e^4,
\quad Je_1=e_2, \quad Je_3=e_4,\\
A=\begin{pmatrix}a &0 &-a &0 \\ 0 &-a &0 &a \\ a &0 &-a&0 \\ 0 &-a & 0& a\end{pmatrix}, \quad
B_1=\begin{pmatrix}
0&\nu_2-\mu_1 & \mu_2&\mu_1 \\
-\nu_2+\mu_1 & 0 & -\mu_1 & \mu_2 \\
\mu_2 & -\mu_1 & 0 & \nu_2+\mu_1\\
\mu_1 & \mu_2 &-\nu_2-\mu_1 & 0
\end{pmatrix},\\
B_2=\begin{pmatrix}
0&-\mu_2-\nu_1 & \nu_2&\nu_1 \\
\mu_2+\nu_1 & 0 & -\nu_1 & \nu_2 \\
\nu_2 & -\nu_1 & 0 & -\mu_2+\nu_1\\
\nu_1 & \nu_2 &\mu_2-\nu_1 & 0
\end{pmatrix},
\end{gather*}
where $a\neq0$ and $(\nu_1,\nu_2),(\mu_1,\mu_2)\in\R^2$ are linearly dependent.
\end{lemma}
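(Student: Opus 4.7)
The crux is to show that $A^2=0$; once this is established the remainder is normalization and routine linear algebra, all carried out under the $\mathrm{U}(1,1)$-freedom afforded by AW-relatedness.

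First I argue that $A$ is nilpotent. Since $AJ=-JA$, if $v\in\frg\otimes\C$ is an eigenvector of $A$ with eigenvalue $\mu$ then $Jv$ is an eigenvector with eigenvalue $-\mu$, so the eigenvalues come in $\pm$-pairs $\{\mu,-\mu,\nu,-\nu\}$. Next, the identity $[B_1,B_2]=2JA^2$ in Lemma~\ref{lemma:AB1B2} places $JA^2$ in the derived subalgebra $[\mathfrak{u}(p,q),\mathfrak{u}(p,q)]=\su(p,q)$, hence $\Tr_\C(JA^2)=0$. Because $A^2$ commutes with $J$ and $J$ acts on $(\frg,J)\cong\C^2$ as multiplication by $i$, this forces $\Tr_\C(A^2)=0$, equivalently $\mu^2+\nu^2=0$, so $\nu=\pm i\mu$. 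If $\mu\neq0$ the four eigenvalues $\{\mu,-\mu,i\mu,-i\mu\}$ are pairwise distinct and $A$ would be semisimple; thus $\mu=0$ and $A$ is nilpotent.

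Second, I upgrade nilpotency to $A^2=0$ via an antilinear representation. The conditions $A=A^*$ and $JA=-AJ$ identify $A$ with an antilinear endomorphism of $\C^2\cong(\frg,J)$ given by $Av=M\bar v$ with $M=\begin{pmatrix}a_1&-b\\b&a_2\end{pmatrix}$ (the shape of $M$ being dictated by Hermitian self-adjointness), and then $A^2$ corresponds to $M\bar M$ as a $\C$-linear operator. Nilpotency forces $\Tr_\C(M\bar M)=|a_1|^2+|a_2|^2-2|b|^2=0$ and $\det(M\bar M)=|a_1a_2+b^2|^2=0$; from the second, $|a_1|\,|a_2|=|b|^2$, and combined with the first this gives $(|a_1|-|a_2|)^2=0$, hence $|a_1|=|a_2|=|b|$. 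A direct entry-wise substitution then yields $M\bar M=0$, so $A^2=0$. Consequently $K:=\ker A=\im A$ is a $2$-dimensional $J$-invariant subspace, totally isotropic since $g(Az,y)=g(z,Ay)=0$ for all $Az\in K$ and $y\in K$; in particular $g$ has signature $(2,2)$, consistent with Proposition~\ref{prop:definite_yields_trivial_extension}.

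Next I normalize $A$. The AW-relatedness freedom lets us conjugate $A$ by $f_g\in\mathrm{U}(1,1)$ and replace $A$ by $(\cos\theta+J\sin\theta)A$ via a rotation $f_h$ of $\frh$. Since $\mathrm{U}(1,1)$ acts transitively on null $J$-invariant $2$-planes, we may take $K=\Span{e_1+e_3,e_2+e_4}$ in a basis where $g=e^1\otimes e^1+e^2\otimes e^2-e^3\otimes e^3-e^4\otimes e^4$ and $Je_1=e_2$, $Je_3=e_4$. Using $A(e_1+e_3)=A(e_2+e_4)=0$ together with $AJ=-JA$, the map $A$ is determined by $A(e_1-e_3)=\lambda(e_1+e_3)+\tau(e_2+e_4)\in K$; the $\theta$-rotation sends $\lambda+i\tau\mapsto e^{i\theta}(\lambda+i\tau)$, so a suitable $\theta$ achieves $\tau=0$ and $\lambda=2a$ for some $a\neq0$, recovering the stated matrix for $A$.

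Finally, to identify $B_1$ and $B_2$ I parameterize general elements of $\mathfrak{u}(1,1)$ (four real parameters each) and impose the two conditions from Lemma~\ref{lemma:AB1B2}; because $A^2=0$, these become $[A,B_2]=J[A,B_1]$ and $[B_1,B_2]=0$. The first equation is linear and, after straightforward bookkeeping, reduces the eight parameters to the four-parameter family $(\mu_1,\mu_2,\nu_1,\nu_2)$ displayed in the lemma. The commutativity condition then simplifies to $\mu_1\nu_2-\mu_2\nu_1=0$, which is exactly the linear dependence of $(\mu_1,\mu_2)$ and $(\nu_1,\nu_2)$. I expect the main obstacle to be organizing this last calculation cleanly, as the conceptual content lies entirely in the first two steps.
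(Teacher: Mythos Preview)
Your proof is correct and reaches the same normal form, but the route to $A^2=0$ is genuinely different from the paper's. The paper argues by cases on the non-semisimple eigenvalue $\lambda$: it rules out $\lambda$ purely imaginary by observing that $A$ and $J$ would induce two anticommuting complex structures on the $2$-dimensional space $[\![W_\lambda]\!]$, and rules out $\lambda$ real nonzero by writing $A,B_1,B_2$ explicitly in a Jordan basis and checking that \eqref{eqn:skewsymmetricpartofabeliancondition} has no solution; only then does it conclude $\lambda=0$ and $A^2=0$. Your argument is more uniform: the observation $[B_1,B_2]\in[\mathfrak{u}(1,1),\mathfrak{u}(1,1)]=\su(1,1)$ gives $\Tr_\C A^2=0$ and hence nilpotency in one stroke, and the antilinear picture $A\leftrightarrow(v\mapsto M\bar v)$ turns $A^2=M\bar M$ into a short computation with $\Tr$ and $\det$. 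This avoids all case analysis and the explicit no-solution check for the real-eigenvalue case, which is the most laborious part of the paper's proof.

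One point of framing to tighten: the lemma asserts only the existence of a basis of $\R^4$, so strictly speaking you may use $f_g\in\mathrm{U}(1,1)$ but not the $f_h$-rotation of $\frh$. This is harmless because conjugation by $\exp(\theta J)\in\mathrm{U}(1,1)$ already sends $A$ to $\cos(2\theta)A+\sin(2\theta)JA$ (since $AJ=-JA$) while fixing each $B_i$ (since $B_iJ=JB_i$), so the same normalization of $A$ is achieved purely by change of basis; you should phrase it that way. The final identification of $B_1,B_2$ is the same computation as the paper's, which organizes it via the decomposition $\mathfrak{u}(1,1)=\R\cdot(e^{12}+e^{34})\oplus\su(1,1)$: commutativity forces the $\su(1,1)$-components of $B_1,B_2$ to be proportional, and then \eqref{eqn:symmetricpartofabeliancondition} is a linear system yielding the displayed form. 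Your dimension count (eight parameters reduced to four by \eqref{eqn:symmetricpartofabeliancondition}, then one dependence relation from $[B_1,B_2]=0$) matches this.
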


\begin{proof}
Since $A$ is not semisimple, there is an eigenvalue $\lambda$ whose generalized eigenspace is not spanned by eigenvectors; thus, it has dimension at least two. Because $A$ anticommutes with $J$, $-\lambda$ and $\overline\lambda$ also have generalized eigenspaces not spanned by eigenvectors, and for dimensional reasons $\lambda$ is either real or purely imaginary.\medskip

We claim that $\lambda$ is necessarily zero. Indeed, suppose that $\lambda=i$; then $A$ leaves invariant a $2$-dimensional space $[\![W_i]\!]$, on which it acts as a complex structure. Since $J$ anticommutes with $A$, it maps $[\![W_i]\!]$ to another $A$-invariant space, which is necessarily $[\![W_i]\!]$ itself. This would give two anticommuting complex structures on the $2$-dimensional space $[\![W_i]\!]$, which is impossible. This argument also shows that $\lambda$ cannot be a nonzero multiple of $i$.\medskip

Suppose now that $\lambda$ is a nonzero real number. Then $-\lambda$ is also an eigenvalue, and we obtain a decomposition into generalized eigenspaces
\[\R^4=V_\lambda\oplus V_{-\lambda};\]
the decomposition is orthogonal because $A$ is $g$-symmetric, and in particular the metric restricted to $V_\lambda$ is non-degenerate. If we consider the $g$-symmetric nilpotent endomorphism $A-\lambda I$ restricted to $V_\lambda$, we see that its kernel must be orthogonal to the image, which is again the kernel; this implies that the kernel is isotropic. In a suitably normalized Jordan basis, the restrictions of $A$ and $g$  take the form
\[A|_{V_{\lambda}}=\begin{pmatrix} \lambda & 1  \\ 0 & \lambda  \end{pmatrix},\quad g|_{V_\lambda}=\pm \begin{pmatrix} 0 & 1 \\ 1 & 0 \end{pmatrix}.\]
Using the fact that $J$ anticommutes with $A$, we obtain
\[A=\begin{pmatrix} \lambda & 1 & 0 & 0 \\ 0 & \lambda & 0 & 0 \\ 0 & 0 & -\lambda& -1 \\ 0 & 0 & 0 & -\lambda\end{pmatrix}, \quad g=\pm(e^1\odot e^2+e^3\odot e^4), \quad Je_1=e_3, Je_2=e_4.\]
Using the fact that the $B_i$ are $g$-antisymmetric and commute with $J$, one can write
\[B_i=\begin{pmatrix}a_i & 0 & b_i & c_i\\ 0 & -a_i& d_i & b_i \\ -b_i & -c_i & a_i & 0 \\ -d_i & -b_i & 0 & -a_i\end{pmatrix}.\]
One then sees that \eqref{eqn:skewsymmetricpartofabeliancondition} has no solution.\medskip

Therefore, the only possibility is that  $A$ only has  zero as an eigenvalue. We are assuming $A\neq0$;  since $A$ anticommutes with $J$, the kernel is $2$-dimensional and coincides with the image. Thus, $\ker A$ is isotropic and $J$-invariant. If we fix an orthonormal basis $\{e_i\}$ of $\R^4$ with
\[Je_1=e_2, \quad Je_3=e_4, \quad g=e^1\otimes e^1+e^2\otimes e^2-e^3\otimes e^3-e^4\otimes e^4,\]
we can replace $A$ with any matrix in the same orbit for the action of the group $\mathrm{U}(1,1)$ that preserves $g$ and $J$. Any lightlike vector takes the form
\[v+w, \quad v\in\Span{e_1,e_2}, w\in\Span{e_3,e_4},\]
where $v,w$ have the same  Euclidean norm. Up to $\mathrm{U}(1)\times\mathrm{U}(1)\subset \mathrm{U}(1,1)$, we can assume $v+w$ is a multiple of $e_1+e_3$. This shows that we can assume that the isotropic, $J$-invariant subspace $\ker A$ is spanned by $e_1+e_3, e_2+e_4$. Now consider the map
\[\Span{e_1,e_2}\xrightarrow{A}\Span{e_1+e_3,e_2+e_4}.\]
The matrix of this map relative to the natural bases is symmetric (as $A$ is $g$-symmetric), so it can be diagonalized. This means that we can use the action of the diagonal $\mathrm{U}(1)$ in $\mathrm{U}(1)\times\mathrm{U}(1)$ to assume that $A(e_1)$ is a multiple of $e_1+e_3$; up to scaling, this fully determines $A$ by $J$-antiinvariance.\medskip

To simplify the proof, we will rescale $A$ to obtain
\[A=\begin{pmatrix}1 &0 &-1 &0 \\ 0 &-1 &0 &1 \\ 1 &0 &-1 &0 \\ 0 &-1 & 0& 1\end{pmatrix}.\]
The generic element of $\lie u(1,1)$ takes the form
\[\begin{pmatrix}
0&-h-\mu_1 & \mu_2&\mu_3 \\
h+\mu_1 & 0 & -\mu_3 & \mu_2 \\
\mu_2 & -\mu_3 & 0 & -h+\mu_1\\
\mu_3 & \mu_2 & h-\mu_1 & 0
\end{pmatrix}.\]
Identifying a $g$-antisymmetric endomorphism $f$ with the $2$-form $g(f(\cdot),\cdot)$, the center is spanned by the matrix corresponding to the $2$-form $e^{12}+e^{34}$, and the complement is $\su(1,1)\cong\Sl(2,\R)$, which has rank one.\medskip

Since $A^2=0$, \eqref{eqn:skewsymmetricpartofabeliancondition} requires that $B_1$ and $B_2$ commute. Thus, their components in $\su(1,1)$ are linearly dependent. We obtain
\[B_1=h(e^{12}+e^{34})+x\beta, \quad B_2=k(e^{12}+e^{34})+y\beta,\]
where $\beta$ is some nonzero element in $\su(1,1)$, $h,k,x,y\in\R$ and we are identifying the skew-symmetric matrices $B_1,B_2$ with $2$-forms. Then \eqref{eqn:symmetricpartofabeliancondition} gives
\[[A, k(e^{12}+ e^{34})+y\beta]=J[A,h(e^{12}+e^{34})+x\beta].\]
Solving this equation in $h,k,x,y,\beta$ shows that $B_1$ and $B_2$ take the form in the statement.
\end{proof}

We are now in a position to classify non-trivial pseudo-K\"ahler extensions $\R^4\rtimes_\varphi\R^2$, where $\R^2$ has the positive-definite K\"ahler structure \eqref{eqn:kahleronR2}. Having established in Proposition~\ref{prop:definite_yields_trivial_extension} that definite metrics on $\R^4$ only yield trivial extensions, we will consider on $\R^4$ the neutral pseudo-K\"ahler structure given by
\begin{equation}
\label{eqn:pseudokahlerR4}
g=e^1\otimes e^1-e^2\otimes e^2+e^3\otimes e^3-e^4\otimes e^4, \quad Je_1=e_3, Je_2=e_4.
\end{equation}

\begin{proposition}
\label{prop:extendabelian}
Every non-trivial pseudo-K\"ahler extension of the form $\R^4\rtimes_\varphi\R^2$ is AW-related to one obtained by taking the pseudo-K\"ahler structure \eqref{eqn:pseudokahlerR4} on $\R^4$ and either
\begin{gather*}
\varphi(a_1)=
a\begin{pmatrix}
1& 2& -1& 0 \\
0& 1 & 0 & 1 \\
1& 0 & -1 & 0\\
0 & -1 & 2 & -1
\end{pmatrix},
\quad \varphi(a_2)=
a\begin{pmatrix}
0& 1& 2& 1 \\
1& 0 & -1& 0 \\
0& 1 & 0 & 1\\
-1 & 2 & 1 & 0
\end{pmatrix},
\end{gather*}
where $a>0$, or
\begin{gather*}
\varphi(a_1)=
\begin{pmatrix} a & -a+c & -b & b \\
a+c & -a & -b & b \\
b&-b& -a& a+c\\
b&-b& -a+c & a \end{pmatrix},
\quad
\varphi(a_2)=
\begin{pmatrix}
    0&0&a-c&-a\\
    0&0&a&-a-c\\
    a+c&-a&0&0\\
    a&-a+c&0&0
\end{pmatrix},
\end{gather*} where $a>0$ and either $b=0=c$ or $b,c\in\R$ with $c\neq0$.
\end{proposition}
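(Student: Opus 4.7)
By Lemma~\ref{lemma:AB1B2}, a pseudo-K\"ahler extension $\R^4\rtimes_\varphi\R^2$ is encoded by a triple $(A,B_1,B_2)$ with $\varphi(a_1)=A+B_1$, $\varphi(a_2)=JA+B_2$, satisfying the stated algebraic constraints. Since the extension is trivial if and only if each $\varphi(a_i)$ is $g$-antisymmetric, which in view of the decomposition into $g$-symmetric and $g$-antisymmetric parts amounts to $A=0$, we may assume $A\neq 0$. Proposition~\ref{prop:definite_yields_trivial_extension} then rules out the definite signature on $\R^4$, so the pseudo-K\"ahler structure on $\R^4$ can be taken to be the neutral one given in \eqref{eqn:pseudokahlerR4}.

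The argument splits into the two cases supplied by Lemmas~\ref{lemma:semisimple} and \ref{lemma:nonsemisimple}, according to whether $A$ is semisimple. Each lemma produces a distinguished basis of $\R^4$ in which $(A,B_1,B_2)$ assume prescribed shapes, parametrized by $(a,k_1,k_2)$ with $k_1^2+k_2^2=2a^2$ in Case~1 and by $(a,\mu_1,\mu_2,\nu_1,\nu_2)$ with $(\mu_1,\mu_2)$ parallel to $(\nu_1,\nu_2)$ in Case~2. In Case~2 the adapted basis differs from \eqref{eqn:pseudokahlerR4} by the transposition of $e_2$ and $e_3$, which must be undone before reading off $\varphi(a_1)$ and $\varphi(a_2)$ in the convention of the statement; a direct substitution then produces matrices of the same general shape as the two families displayed in the proposition.

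The remainder of the proof consists of exploiting the residual AW-freedom to match the explicit normal forms. The relevant symmetries are the $\mathrm{U}(1)$-rotation of $(a_1,a_2)\subset\R^2$, the involution $a_i\mapsto -a_i$, and conjugation by the stabilizer of $A$ inside the structure group $\mathrm{U}(1,1)$ of $(\R^4,g,J)$; suitably combined, they induce a coupled action on the residual parameters. In Case~1 this combined action is transitive on the circle $k_1^2+k_2^2=2a^2$, so one can fix $k_1=k_2=a$, with the sign of $a$ absorbed by $a_i\mapsto -a_i$; this yields the first family. In Case~2, the linear dependence of $(\mu_1,\mu_2)$ and $(\nu_1,\nu_2)$ lets us align both vectors by a single rotation and reduce to $\nu_1=\nu_2=0$; setting $b:=\mu_1$, $c:=\mu_2$ and again arranging $a>0$ gives the second family.

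I expect the principal obstacle to be the dichotomy ``$b=0=c$ or $c\neq 0$'' in the second family. Establishing it amounts to showing that the slice $\{c=0,\,b\neq 0\}$ is AW-related to a configuration with $c\neq 0$: the strategy is to exhibit an explicit element of the normalizer of $\Span{A}$ in $\mathrm{U}(1,1)$ that conjugates a pure $\mu_1$-perturbation into one with nontrivial $\mu_2$-component, compensated by a rotation of $\R^2$ that preserves the vanishing of $(\nu_1,\nu_2)$. Once this is verified, the precise matrix presentation in the statement follows from a direct but lengthy computation, and checking that no further reductions are possible when $c\neq 0$ completes the classification.
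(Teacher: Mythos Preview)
Your overall plan is correct and matches the paper: use Lemma~\ref{lemma:AB1B2} to reduce to triples $(A,B_1,B_2)$; split according to whether $A$ is semisimple via Lemmas~\ref{lemma:semisimple} and~\ref{lemma:nonsemisimple}; exploit the circle symmetry obtained by combining $\exp(\theta J)$ on $\R^4$ with a rotation of $\R^2$ to rotate $(B_1,B_2)$ while fixing $A$; in Case~1 normalize $(k_1,k_2)$ on the circle $k_1^2+k_2^2=2a^2$ and arrange $a>0$; in Case~2 set $\nu_1=\nu_2=0$ and change basis to \eqref{eqn:pseudokahlerR4}.

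The gap is in your handling of the dichotomy in Case~2. You propose to show that the slice $\{c=0,\,b\neq 0\}$ is AW-related to a point with $c'\neq 0$, via conjugation by an element of the normalizer of $\Span{A}$ in $\mathrm{U}(1,1)$. This cannot work. In the normal form one checks directly that $c=0$ is equivalent to $[A,B_1]=0$; conjugation by any $f_g\in\mathrm{U}(1,1)$ with $f_gAf_g^{-1}\in\Span{A}$ preserves this commutation, and the rotation of $\R^2$ does not help either, since $B_2=0$ when $c=0$. More is true: decomposing the commutation constraint in Definition~\ref{def:equivalence} into its $g$-symmetric and $g$-antisymmetric parts shows that any AW-related triple $(A,B_1',B_2')$ must also satisfy $[A,B_1']=0$, so $c'=0$ is forced.

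The correct reduction goes the \emph{other} way, and uses a part of AW-relatedness that you never invoke. When $c=0$, each $B_i$ is a derivation commuting with both $\varphi(a_1)=A+B_1$ and $\varphi(a_2)=JA+B_2$; hence, taking $f_g=f_h=\id$ in Definition~\ref{def:equivalence}, the commutation clause permits replacing $(B_1,B_2)$ by $(0,0)$, i.e.\ one sets $b=0$. Your plan only exploits the freedom coming from the isomorphisms $f_g,f_h$, but the definition also grants the freedom to modify the antisymmetric parts subject to the commutation condition, and that is precisely what absorbs $b$ when $c=0$. The same computation shows that when $c\neq 0$ this commutation fails, so no further reduction of $(b,c)$ is available; this is what justifies leaving $b,c$ as free parameters in the second family.
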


\begin{proof}
Write $\varphi(a_1)=A+B_1$, $\varphi(a_2)=JA+B_2$ as in Lemma~\ref{lemma:AB1B2}. Consider the one-parameter group of automorphisms $\{\exp \theta J\}$, which preserves the pseudo-K\"ahler structure of $\R^4$. We see that
\begin{equation}
\label{eqn:AdJA}
\Ad(\exp\theta J)A=\cos 2\theta A + \sin 2\theta JA.
\end{equation}
In particular this shows that for every element $v\in \R^2$ of norm one, $\varphi(v)^s$ is conjugated to $A$. Thus, $A$ is semisimple if and only if $\varphi(v)^s$ is semisimple for all $v\in\R^2$. This condition also holds if one takes an AW-related extension, which amounts to acting by automorphisms and modifying the skew-symmetric part of $\varphi$.\medskip

Suppose first that $A$ is semisimple, so that $A,B_1,B_2$ take the form of Lemma~\ref{lemma:semisimple}. Since $B_1$ and $B_2$ are invariant under $\{\exp \theta J\}$ and in light of \eqref{eqn:AdJA}, the automorphisms $f_h=\exp (-\theta/2 J)$ and $f_g=\exp\theta J_h$ yield an AW-related extension with
\[\varphi(a_1)=A+(\cos\theta B_1+\sin\theta B_2), \quad \varphi(a_2)=JA+(-\sin\theta B_1+\cos\theta B_2).\]
In terms of the parameters $(a,k_1,k_2)$, automorphisms give a circle symmetry in the $(k_1,k_2)$-plane. Since $2a^2=k^2_1+k^2_2$, we can assume $k_1=k_2=a$. In addition, we may reverse the sign of $a$ by reflecting $\R^4$ around the plane $\Span{e_2,e_4}$.  Different choices of $a>0$ yield extensions which are not AW-related, because $\det A=(2a^2)^2$ is an invariant.\medskip

Now consider the case in which $A$ is not semisimple, and let $A,B_1,B_2$ be as in Lemma~\ref{lemma:nonsemisimple}. Arguing as above, we see that we can rotate in the $(B_1,B_2)$-plane, which means that we can fix any angle $\theta$ and obtain new parameters
\[\begin{pmatrix}
    \mu'_1 & \nu'_1\\
    \mu'_2  & \nu'_2
\end{pmatrix}
=
\begin{pmatrix}
    \mu_1 & \nu_1\\
    \mu_2  & \nu_2
\end{pmatrix}
\begin{pmatrix}
    \cos\theta & -\sin\theta\\
    \sin\theta & \cos\theta
\end{pmatrix}.
\]
Since $\big(\begin{smallmatrix}
    \mu_1 & \nu_1\\
    \mu_2  & \nu_2
\end{smallmatrix}\big)$ is not invertible, we may assume that $\nu_1=0=\nu_2$. Changing the basis so that the metric takes the form \eqref{eqn:pseudokahlerR4}, we obtain
\[
A=\begin{pmatrix} a & -a & 0 & 0 \\
a & -a & 0 & 0 \\
0&0& -a& a\\
0&0& -a & a \end{pmatrix}, \quad B_1=\begin{pmatrix}
    0 & c & -b &    b\\
    c & 0  & -b & b \\
    b & -b & 0 & c \\
    b & -b & c & 0
\end{pmatrix}, \quad
B_2=\begin{pmatrix}
    0 &  0 & -c & 0\\
0&0&0& -c \\
    c & 0 & 0 & 0\\
    0&c & 0 & 0
\end{pmatrix}.
\]
Now suppose that there are two different AW-related extensions of this form, $A,B_1,B_2$, $A',B_1',B_2'$. We can assume $A=A'$. By Definition~\ref{def:equivalence}, $B_1'-B_1$ and $B_2'-B_2$ commute with both $A+B_1$ and $JA+B_2$. Separating the $g$-symmetric and $g$-antisymmetric part, this boils down to $B_1'-B_1$, $B_2'-B_2$ commuting with each of $A$, $B_1$ and $B_2$. This happens  if and only if $c=0=c'$ with $b,b'$ arbitrary; this shows that when $b$ can be set to zero if $c=0$, proving that $\varphi$ has the form in the statement up to isometry.
\end{proof}

Outside of the abelian case, it turns out that there are only two pseudo-K\"ahler Lie algebras admitting a non-trivial extension. They are the Lie algebra
$\frg=\mathfrak{rh}_3$, with nonzero Lie brackets
\[[e_1,e_2]=e_3,\]
and the pseudo-K\"ahler structure given by
\begin{equation}
\label{eqn:kahleronrh3}
Je_1=e_2, Je_3=e_4,  \quad g=e^1\odot e^3+e^2\odot e^4,
\end{equation}
and $\frg=\mathfrak{r}'_2$, whose nonzero Lie brackets are $$[e_1,e_3]=e_3,\quad[e_1,e_4]=e_4,\quad[e_2,e_3]=e_4,\quad[e_2,e_4]=-e_3,$$
and the pseudo-K\"ahler structure given by
\begin{equation}
\label{eqn:kahleronr2}
J=\left(\begin{array}{rrrr}
0 & 1 & 0 & 0 \\
-1 & 0 & 0 & 0 \\
0 & 0 & 0 & -1 \\
0 & 0 & 1 & 0
\end{array}\right)\quad\text{and}\quad g=\left(\begin{array}{rrrr}
a_{12} & 0 & -a_{14} & a_{13} \\
0 & a_{12} & a_{13} & a_{14} \\
-a_{14} & a_{13} & 0 & 0 \\
a_{13} & a_{14} & 0 & 0
\end{array}\right), \quad a_{13}^2+a_{14}^2\neq0.
\end{equation}

\begin{remark}
In \cite{Ova06}, $\mathfrak{rh}_3$ appears with a two-parameter family of pseudo-K\"ahler structures. We are indebted to Federico A.\ Rossi for pointing out to us that they are related to each other by isometric isomorphisms. On the other hand, $\mathfrak{r}'_2$ admits a distinct pseudo-K\"ahler structure, which does not admit non-trivial extensions.
\end{remark}

\begin{theorem}
\label{thm:classification42}
Every non-trivial extension of a $4$-dimensional pseudo-K\"ahler Lie algebra by a $2$-dimensional abelian Lie algebra is AW-related to one of:
\begin{itemize}
\itemsep 0em
\item the extensions of $\R^4$ given in Proposition~\ref{prop:extendabelian};
\item the extension of $\mathfrak{rh}_3$ with the pseudo-K\"ahler structure \eqref{eqn:kahleronrh3} and
\[
\varphi(a_1)=
\begin{pmatrix}
0 & 0 & 0 & 0 \\
0 & 0 & 0 & 0 \\
a & 0 & 0 & 0 \\
0 & -a & 0 & 0
\end{pmatrix},
\quad \varphi(a_2)=
\begin{pmatrix}
0 & 0 & 0 & 0 \\
0 & 0 & 0 & 0 \\
0 & a & 0 & 0 \\
a & 0 & 0 & 0
\end{pmatrix}, \quad a>0;
\]
\item the extension of $\mathfrak{r}'_2$ with the pseudo-K\"ahler structure \eqref{eqn:kahleronr2} and
$$\varphi(a_1)=\left(\begin{array}{rrrr}
0 & 0 & 0 & 0 \\
0 & 0 & 0 & 0 \\
a & 0 & 0 & 0 \\
0 & a & 0 & 0
\end{array}\right),\quad\varphi(a_2)=\left(\begin{array}{rrrr}
0 & 0 & 0 & 0 \\
0 & 0 & 0 & 0 \\
0 & -a & 0 & 0 \\
a & 0 & 0 & 0
\end{array}\right), \quad a>0.$$
\end{itemize}
\end{theorem}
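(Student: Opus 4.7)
The plan is to exhaust Ovando's list \cite{Ova06} of the eleven $4$-dimensional pseudo-K\"ahler Lie algebras and, for each one, determine all maps $\varphi\colon\R^2\to\Der(\frg)$ satisfying the conditions of Theorem~\ref{thm:construction} together with $[\varphi(a_1),\varphi(a_2)]=0$ (which encodes abelianness of $\frh$), and then reduce the resulting families modulo AW-relatedness. The abelian case $\frg=\R^4$ is already settled: if $g$ is positive definite, Proposition~\ref{prop:definite_yields_trivial_extension} forces the extension to be trivial, and if $g$ has neutral signature, Proposition~\ref{prop:extendabelian} produces exactly the two families in the statement.

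For each of the ten non-abelian pseudo-K\"ahler structures $(\frg,g,J)$ in Ovando's list, my plan is to proceed in four steps: first, write down $\Der(\frg)$ explicitly in the basis used in \cite{Ova06}; second, inside $\Der(\frg)$ solve the linear conditions $J\circ\varphi(A)^s=\varphi(J_hA)^s$ and $[J,\varphi(A)^a]=0$ coming from Theorem~\ref{thm:construction}, which produces a finite-dimensional space of candidates parametrized by a small number of real variables; third, impose $[\varphi(a_1),\varphi(a_2)]=0$, which is a quadratic condition cutting out the feasible locus; fourth, use the automorphism group of $(\frg,g,J)$ together with the circle action \eqref{eqn:AdJA} on $\frh=\R^2$ and the freedom, guaranteed by Definition~\ref{def:equivalence}, to modify $\varphi$ by any family of mutually commuting $g$-antisymmetric $J$-commuting derivations, in order to bring the surviving parameters to a normal form.

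The expected outcome of steps (ii)--(iii) is that for each of the Lie algebras other than $\mathfrak{rh}_3$ and $\mathfrak{r}'_2$, the symmetric part $\varphi(A)^s$ is forced to vanish for all $A\in\frh$, so the extension is trivial. For $\mathfrak{rh}_3$ equipped with \eqref{eqn:kahleronrh3} and for $\mathfrak{r}'_2$ equipped with \eqref{eqn:kahleronr2}, one obtains a one-parameter family of nontrivial $\varphi$, which after the AW-normalization of step (iv) and a sign reversal collapses to the canonical form displayed in the statement with a single invariant $a>0$; the positivity of $a$ is genuine since, for example, the restriction of $\Trace\varphi(\cdot)\varphi(\cdot)^*$ to $\frh$ is $\mathrm{AW}$-invariant and scales quadratically in $a$.

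The main obstacle is the bookkeeping: Ovando's list contains several multi-parameter families of pseudo-K\"ahler structures on the same underlying Lie algebra, so one must in principle repeat steps (i)--(iv) for each member of each family and then check that different choices inside one isometry class produce AW-related extensions. A concrete subtlety is the one pointed out in the remark preceding Theorem~\ref{thm:classification42}: the two-parameter family of pseudo-K\"ahler structures on $\mathfrak{rh}_3$ in \cite{Ova06} is a single isometric-isomorphism class, so I would fix one representative, carry out the computation there, and transport the result back via the explicit isometric isomorphisms. A second delicate point is the Lie algebra $\mathfrak{r}'_2$, which carries two non-equivalent pseudo-K\"ahler structures (\cite{Ova06}); only \eqref{eqn:kahleronr2} admits nontrivial extensions, and one must verify by the above procedure that for the other structure step (iii) forces $\varphi^s=0$.
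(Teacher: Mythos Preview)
Your proposal is correct and follows essentially the same route as the paper's proof: exhaust Ovando's list, compute $\Der(\frg)$, impose the Theorem~\ref{thm:construction} conditions plus commutativity, and reduce by AW-relatedness, with the $\R^4$ case already dispatched by Propositions~\ref{prop:definite_yields_trivial_extension} and~\ref{prop:extendabelian}.

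One organizational difference worth noting: the paper channels all of your steps (ii)--(iii) through the parametrization of Lemma~\ref{lemma:AB1B2}, writing $\varphi(a_1)=A+B_1$, $\varphi(a_2)=JA+B_2$ with $A$ symmetric $J$-anticommuting and $B_i$ antisymmetric $J$-commuting, so that the abelianness condition splits into the two equations \eqref{eqn:symmetricpartofabeliancondition}--\eqref{eqn:skewsymmetricpartofabeliancondition}. For $\mathfrak{rh}_3$, this lets the paper argue structurally (block-upper-triangular derivations force $[B_1,B_2]$ to have zero top-left block, hence $A^2$ has zero top-left block, hence $A$ has zero top block) rather than solving the raw quadratic system you describe. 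The paper also treats the second pseudo-K\"ahler structure on $\mathfrak{r}'_2$ explicitly and dismisses the remaining eight Lie algebras with ``the other cases are similar''; your proposal is more explicit about the bookkeeping needed there. Both approaches reach the same normal forms, and your invariant $\Tr\varphi(\cdot)\varphi(\cdot)^*$ for distinguishing values of $a$ is a clean addition the paper does not spell out.
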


\begin{proof}
We illustrate the computation for $\lie{rh}_3$. In this case derivations take the block form
\begin{equation}
\label{eqn:blockform}
D=\begin{pmatrix} D_t & 0 \\D_{l} & D_{r}\end{pmatrix},
\end{equation}
where $D_t$, $D_{l}$ and $D_{r}$ are two by two matrices, with the subscript indicating the position of the corresponding block, and $D_r$ is upper triangular. Relative to the metric $g$ defined in \eqref{eqn:kahleronrh3}, the $g$-symmetric and $g$-antisymmetric part of a derivation also has the block form \eqref{eqn:blockform}. If $A,B_1,B_2$ are as in Lemma~\ref{lemma:AB1B2}, then the blocks $(B_1)_t$, $(B_2)_t$ are in the centralizer of the complex structure $e^1\otimes e_2-e^2\otimes e_1$, so that \[[B_1,B_2]_t=[(B_1)_t,(B_2)_t]=0.\] Then \eqref{eqn:skewsymmetricpartofabeliancondition} implies that $(A^2)_t=0$. Since the block $A_t$ anticommutes with the complex structure, it is symmetric and trace-free, which makes $A_t$ zero, as it is a symmetric matrix that squares to zero. Then $g$-symmetry of $A$ implies that the block $A_r$ also vanishes. Keeping in mind that $A$ anticommutes with $J$, we see that the block $A_l$ is symmetric and trace-free. Up to an automorphism that rotates the basis $\{a_1,a_2\}$ of \eqref{eqn:kahleronR2}, and consequently $A$ and $JA$, we can assume that $A$ takes the form
\[A=\begin{pmatrix}0 & 0 & 0 & 0 \\
0 & 0 & 0 & 0 \\
a & 0 & 0 & 0 \\
0 & -a & 0 & 0
    \end{pmatrix},\quad a>0.\]
Since $A_r$ and $(JA)_r$ vanish, the blocks $(B_i)_r$ are upper triangular.
Imposing in addition that the $B_i$ are $g$-antisymmetric and commute with $J$, we find
\[B_i=
\left(
\begin{array}{cccc}
 h_i & 0 & 0 & 0 \\
 0 & h_i & 0 & 0 \\
 0 & k_i & -h_i&0 \\
 -k_i & 0 & 0 & -h_i \\
\end{array}
\right),\quad h_i,k_i\in\R.\]
Then \eqref{eqn:symmetricpartofabeliancondition} implies that $h_1=0=h_2$; it follows that the $B_i$ are derivations that commute with $A+B_1$, $JA+B_2$, and the pseudo-K\"ahler extension is AW-related to one with $k_1=0=k_2$.\medskip

For $\mathfrak{r}'_2$, the space of derivations is
\begin{equation}
\label{eqn:derr2}
\begin{pmatrix}
 0 & 0 & 0 & 0 \\
 0 & 0 & 0 & 0 \\
 a & -b & c & d \\
 b & a  & -d & c
\end{pmatrix}.
\end{equation}
To determine extensions relative to the pseudo-K\"ahler structure \eqref{eqn:kahleronr2}, observe that irrespective of the metric imposing that $A$ anticommute with $J$ and $B_i$ commute with $J$ forces
\[A=\begin{pmatrix}
 0 & 0 & 0 & 0 \\
 0 & 0 & 0 & 0 \\
 a & -b & 0 & 0 \\
 b & a  & 0 & 0
\end{pmatrix}, \quad B_i=
\begin{pmatrix}
 0 & 0 & 0 & 0 \\
 0 & 0 & 0 & 0 \\
 0 & 0 & c_i & d_i \\
 0 & 0  & -d_i & c_i
\end{pmatrix}.\]
Again, we can assume $a\geq0$, $b=0$ up to isometry, and the extension is only non-trivial if $a\neq0$. Imposing that $B_i$ is skew-symmetric gives $B_i=0$.\medskip

On the other hand, $\mathfrak{r}'_2$ also has a pseudo-K\"ahler structure
$$J'=\left(\begin{array}{rrrr}
0 & 0 & -1 & 0 \\
0 & 0 & 0 & -1 \\
1 & 0 & 0 & 0 \\
0 & 1 & 0 & 0
\end{array}\right)\quad\text{and}\quad g'=\left(\begin{array}{rrrr}
-a_{13} & -a_{14} & 0 & 0 \\
-a_{14} & a_{13} & 0 & 0 \\
0 & 0 & -a_{13} & -a_{14} \\
0 & 0 & -a_{14} & a_{13}
\end{array}\right),\quad  a_{13}^2+a_{14}^2\neq0.$$
Suppose $A,B_1,B_2$ satisfy Lemma~\ref{lemma:AB1B2}. The component of the derivation \eqref{eqn:derr2} that commutes with $J$ is $g$-antisymmetric if and only if $c=d=0$, so we must have
\[
A=
\begin{pmatrix}
0 & 0 & a & -b \\
 0 & 0 & b & a \\
 a & -b & 0 & 0 \\
 b & a & 0 & 0 \\
 \end{pmatrix},
 \quad B_1=
\begin{pmatrix}
 0 & 0 & -a & b \\
 0 & 0 & -b & -a \\
 a & -b & 0 & 0 \\
 b & a & 0 & 0 \\
\end{pmatrix},\]
and $B_2$ has the same form as $B_1$; however, $JA+B_2$ cannot be a derivation unless $a$ and $b$ are zero. This shows that every extension of this pseudo-K\"ahler structure is trivial.\medskip

The other cases are similar.
\end{proof}

We would like to thank Luis Ugarte for the discussion that led to the following remark.

\begin{remark}
    Denote by $\tilde{\frg}_{a,b,c}=\R^4\rtimes_\varphi\R^2$ the second family of pseudo-K\"ahler extensions appearing in Proposition~\ref{prop:extendabelian}, depending on parameters $a,b,c$; $\tilde{\frg}_{a,b,c}$ is nilpotent if and only if $c=0$. In fact, $\tilde{\frg}_{a,b,0}$ is $2$-step nilpotent by Remark~\ref{remark:2-step}. Pseudo-Kähler nilpotent Lie algebras of dimension $6$ were classified in \cite[Theorem~3.1]{CFU04}. In their notation, $\tilde{\frg}_{0,b,0}\cong\frh_6$ for any $b\in\R\setminus\{0\}$ and $$\tilde{\frg}_{a,b,0}\cong\begin{cases}
        \frh_5&\mbox{if }b^2<1,\\
        \frh_4&\mbox{if }b^2=1,\\
        \frh_2&\mbox{if }b^2>1,\\
    \end{cases}$$ for any $a>0$. The extension $\tilde{\frg}_a=\mathfrak{rh}_3\rtimes_{\varphi_a}\R^2$ is always $2$-step nilpotent with abelian complex structure. In the notation of \cite{CFU04}, $\tilde{\frg}_0\cong\frh_8$ and $\tilde{\frg}_a\cong\frh_5$ when $a\neq0$.
\end{remark}

We conclude this section by listing explicitly the $6$-dimensional pseudo-K\"ahler Lie algebras obtained in Theorem~\ref{thm:classification42}.
\begin{itemize}
\itemsep 0em
\item Extension of
$\frg$ abelian, $\varphi(a_i)$ semisimple: normalizing to $a=1$ (which has the effect of rescaling the metric), with notation as in Example~\ref{example:withthenotation}, we obtain the Lie algebra
\begin{multline*}
\big(e^{15}+2 e^{25}+e^{26}-e^{35}+2 e^{36}+e^{46},e^{16}+e^{25}-e^{36}+e^{45},\\
e^{15}+e^{26}-e^{35}+e^{46},-e^{16}-e^{25}+2 e^{26}+2 e^{35}+e^{36}-e^{45},0,0\big);
\end{multline*}
the metric and K\"ahler form are given by
\[\tilde g=e^1\otimes e^1-e^2\otimes e^2+e^3\otimes e^3-e^4\otimes e^4+e^5\otimes e^5+e^6\otimes e^6, \quad\tilde\omega=e^{12}-e^{34}+e^{56}.\]
This metric is Ricci-flat but not flat.
%./ratatoskr/ratatoskr curvature --lie-algebra "15+2*25-35+26+2*36+46,25+45+16-36,15-35+26+46,-25+2*35-45-16+2*26+36,0,0" --diagonal-metric 1,-1,1,-1,1,1

\item Extension of
$\frg$ abelian, $\varphi(a_i)$ not semisimple: rescaling the metric so that $a=1$, we obtain
\begin{multline*}
\big(e^{15}- e^{25}+c e^{25}-b e^{35}+ e^{36}-c e^{36}+b e^{45}- e^{46},-(1+c) (-e^{15}+e^{46})+ (-e^{25}+e^{36})+b (-e^{35}+e^{45}),\\
(1+c) (e^{16}+e^{45})- (e^{26}+e^{35})-b (-e^{15}+e^{25}),-(1-c) (e^{26}+e^{35})+ (e^{16}+e^{45})-b (-e^{15}+e^{25}),0,0\big)
\end{multline*}
with metric and K\"ahler form
\[\tilde g=e^1\otimes e^1-e^2\otimes e^2+e^3\otimes e^3-e^4\otimes e^4+e^5\otimes e^5+e^6\otimes e^6, \quad\tilde\omega=e^{13}-e^{24}+e^{56},\]
also Ricci-flat but not flat.
 %ratatoskr/ratatoskr curvature --lie-algebra "[a]*15+[c-a]*25-[b]*35+[b]*45+[a-c]*36-[a]*46,[a+c]*15-[a]*25-[b]*35+[b]*45+[a]*36+[(-a-c)]*46,[b]*15-[b]*25-[a]*35+[(a+c)]*45+[(a+c)]*16-[a]*26,[b]*15-[b]*25+[(c-a)]*35+[a]*45+[a]*16+[(-a+c)]*26,0,0" --diagonal-metric 1,-1,1,-1,1,1 --latex
\item Extension of $\lie{rh}_3$:
\[(0,0,a (e^{15}+e^{26})-e^{12},-a (-e^{16}+e^{25}),0,0)\]
with metric and K\"ahler form
\[\tilde g=e^1\odot e^3+e^2\odot e^4+e^5\otimes e^5+e^6\otimes e^6, \quad\tilde\omega=e^{14}-e^{23}+e^{56},\]
also Ricci-flat but not flat, unless $a=0$.
% ratatoskr/ratatoskr curvature --lie-algebra "0,0,[a]*15-12+[a]*26,[a]*16-[a]*25,0,0" --metric-by-flat 3,4,1,2,5,6
\item Extension of
$\lie{r}'_2$:
\[\big(0,0,-a (-e^{15}+e^{26})-e^{13}+e^{24},a (e^{16}+e^{25})-e^{14}-e^{23},0,0\big)\]
with metric and K\"ahler form \begin{align*}
    \tilde g&=x(e^1\otimes e^1+e^2\otimes e^2)+y(e^2\odot e^4-e^1\odot e^3)+z(e^1\odot e^4+e^2\odot e^3)+e^5\otimes e^5+e^6\otimes e^6,\\
    \tilde\omega&=-xe^{12}-ze^{13}-ye^{14}-ye^{23}+ze^{24}+e^{56},
\end{align*} where $(y,z)\neq(0,0)$. The metric is Ricci-flat for all $x,y,z$, and flat precisely when $x=a^2 (y^2 + z^2)$.
%ratatoskr/ratatoskr curvature --lie-algebra "0,0,[a]*15-[a]*26-13+24,[a]*25+[a]*16-14-23,0,0" --metric-by-flat [x]*1-[y]*3+[z]*4,[x]*2+[z]*3+[y]*4,-[y]*1+[z]*2,[z]*1+[y]*2,5,6
\end{itemize}

\section{Hypersymplectic structures}

In this section we construct hypersymplectic structures on the semidirect product $\tilde\frg=\frg\rtimes_\varphi\frh$ equipped with the pseudo-K\"ahler structure given by Theorem~\ref{thm:construction}. We use this method to produce new examples of hypersymplectic Lie algebras.\medskip

First of all let us set the following definitions.

\begin{definition}
    We say that $(\frg,g,J,E)$ is an \emph{almost hypersymplectic Lie algebra} if $(g,J)$ is a pseudo-K\"ahler structure and $E\in\End(\frg)$ is an almost para-complex structure (i.e.\ $E^2=\mathbbm{1}$) such that $JE=-EJ$ and $g(E\cdot,E\cdot)=-g$. If in addition $E$ is parallel with respect to the Levi-Civita connection of $g$, then $(\frg,g,J,E)$ is called a \emph{hypersymplectic Lie algebra}.
\end{definition}

Recall that the existence of an almost hypersymplectic structure on a Lie algebra implies that its dimension is a multiple of $4$ and the metric has neutral signature (see \cite[p.~2043]{And06}).

\begin{example}\label{ex:flat_HS_R4}
    The basic example of a hypersymplectic Lie algebra is $\R^4$ equipped with the flat metric $$g=e^1\odot e^4-e^2\odot e^3$$ and complex and para-complex structures given by \begin{align*}
        J&=e^1\otimes e_3+e^2\otimes e_4-e^3\otimes e_1-e^4\otimes e_2,\\
        E&=e^1\otimes e_1+e^2\otimes e_2-e^3\otimes e_3-e^4\otimes e_4.
    \end{align*}

    The space of endomorphisms that commute with both $J$ and $E$ as above is $$\left\{\begin{pmatrix}
        a & b & 0 & 0\\
        c & d & 0 & 0\\
        0 & 0 & a & b\\
        0 & 0 & c & d
    \end{pmatrix}\mid a,b,c,d\in\R\right\}.$$

    If we require that these endomorphisms are moreover $g$-antisymmetric, then we need the condition $a+d=0$. So we find the space $$\left\{\begin{pmatrix}
        A&0\\
        0&A
    \end{pmatrix}\mid A\in\lie{sl}(2,\R)\right\}.$$

    This gives us the inclusion $\lie{sl}(2,\R)\cong\lie{sp}(2,\R)\hookrightarrow\lie{gl}(4,\R)$. More generally, by considering the flat hypersymplectic structure on $\R^{4n}$, we get $\lie{sp}(2n,\R)\hookrightarrow\lie{gl}(4n,\R)$.
\end{example}

Recall from \cite[p.~255]{FPPS04} that a \emph{Kodaira manifold} is a compact complex manifold $(M,J)$, where $M$ is a nilmanifold whose universal cover is 2-step nilpotent and has half-dimensional center, and $J$ is a left-invariant complex structure preserving the center of the corresponding Lie algebra. Inspired by this notion of Kodaira manifold we consider the following class of hypersymplectic Lie algebras.

\begin{definition}\label{def:Kodaira_type}
    Let $(\frg,g,J,E)$ be a hypersymplectic Lie algebra. We say that it is of \emph{Kodaira type} if $\frg$ is $2$-step nilpotent and its center is half-dimensional and $J$-invariant.
\end{definition}

The only non-abelian $4$-dimensional nilpotent hypersymplectic Lie algebra is of Kodaira type (see \cite[Theorem~23]{And06}). In \cite[Section~3.2]{FPPS04} some examples of hypersymplectic Lie algebras of Kodaira type are constructed in dimension $8$. In \cite[Section~5.1]{AD06} examples of hypersymplectic Lie algebras of Kodaira type are constructed in dimension $8n$ for $n\geq1$ and in \cite[Section~7.3]{CPO11} the authors construct examples in dimension $4n$ for $n\geq1$. It is pointed out in \cite[p.~261]{FPPS04} that the metric of a hypersymplectic Lie algebra of Kodaira type is flat. Let us state it for future reference.

\begin{proposition}[Fino-Pedersen-Poon-Sørensen]\label{prop:Kodaira_type_flat}
    Let $(\frg,g,J,E)$ be a hypersymplectic Lie algebra of Kodaira type. Then the metric $g$ is flat.
\end{proposition}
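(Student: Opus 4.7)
The plan is to reduce flatness to the $g$-isotropy of the centre $\frz$, and then obtain flatness by a direct computation from Koszul's formula on a $2$-step nilpotent Lie algebra whose centre is Lagrangian. (If $\frg$ is abelian the conclusion is trivial, so I may assume $[\frg,\frg]\neq 0$.) Assume for the moment that $\frz$ is $g$-isotropic. Since $\dim\frz=\tfrac12\dim\frg$, this upgrades to $\frz=\frz^\perp$, and Koszul's formula
\[
2g(\nabla_XY,W)=g([X,Y],W)+g([W,X],Y)-g([Y,W],X)
\]
immediately gives $\nabla_XY\in\frz$ for all $X,Y\in\frg$: testing against $W\in\frz$, the last two summands vanish because $\frz$ is central, while the first vanishes because $[X,Y]$ and $W$ both lie in the isotropic $\frz$. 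If instead $X$ or $Y$ already lies in $\frz$, then every surviving Koszul summand pairs two central vectors and hence vanishes, so $\nabla_XY=0$. Combining these facts, for any $X,Y,W\in\frg$, $\nabla_YW\in\frz$ gives $\nabla_X\nabla_YW=0$ and symmetrically $\nabla_Y\nabla_XW=0$, while $[X,Y]\in\frz$ forces $\nabla_{[X,Y]}W=0$; thus $R(X,Y)W=0$ and $g$ is flat.

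The crux is therefore the $g$-isotropy of $\frz$. The starting input is that for each of the three closed K\"ahler forms $\omega_J,\omega_E,\omega_{JE}$ and each $Z\in\frz$, centrality of $Z$ collapses closedness to $\omega([X,Y],Z)=0$ for all $X,Y\in\frg$. Using the $J$-invariance of $\frz$ together with the $g$-antisymmetry of $J$, $E$ and $JE$, one extracts the three orthogonality relations
\[
[\frg,\frg]\perp\frz,\qquad [\frg,\frg]\perp E\frz,\qquad [\frg,\frg]\perp JE\frz;
\]
equivalently, $[\frg,\frg]$ is $g$-orthogonal to the $(J,E,JE)$-stable subspace $\frz+E\frz+JE\frz$, which must be proper in $\frg$ since $[\frg,\frg]\neq 0$.

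The main obstacle is now to promote these three orthogonality conditions to $\frz\subseteq\frz^\perp$. The plan here is to use the parallelism of $E$, which is strictly stronger than closedness of $\omega_E$: the $\pm1$-eigendistributions $\frg^\pm$ of $E$ are then $g$-Lagrangian and interchanged by $J$. Writing $Z=Z^++Z^-$ along $\frg^+\oplus\frg^-$ for each $Z\in\frz$, $g$-isotropy of $\frz$ reduces to the vanishing of the bilinear pairing $(Z_1,Z_2)\mapsto g(Z_1^+,Z_2^-)+g(Z_1^-,Z_2^+)$. The $J$-invariance of $\frz$ identifies the projections $\pi^\pm(\frz)\subseteq\frg^\pm$ via $J$, and feeding the three orthogonality relations into the bracket structure of the $E$-graded decomposition $\frg=\frg^+\oplus\frg^-$ is expected to force this pairing to vanish, yielding $g|_{\frz\times\frz}=0$ and completing the reduction to the Koszul computation of the first paragraph.
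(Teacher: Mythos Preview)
The paper does not supply its own proof of this proposition; it is stated as a result of Fino--Pedersen--Poon--S{\o}rensen and attributed to \cite[p.~261]{FPPS04}. So there is no in-paper argument to compare against, and your proposal must stand on its own.

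Your first paragraph is correct: once one knows that $\frz$ is $g$-isotropic (hence $g$-Lagrangian, by the half-dimensionality), the Koszul argument you give does show $\nabla_XY\in\frz$ for all $X,Y$, and $\nabla_XY=0$ whenever either argument is central, whence $R\equiv 0$. Your second paragraph is also correct: closedness of $\omega_J,\omega_E,\omega_{JE}$ together with $J\frz=\frz$ yields $[\frg,\frg]\perp\frz$, $[\frg,\frg]\perp E\frz$, $[\frg,\frg]\perp JE\frz$.

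The problem is the third paragraph, which you yourself flag as the ``main obstacle'' and then do not resolve: saying the pairing ``is expected to'' vanish is not an argument. The difficulty is real. All three orthogonality relations you derived constrain only $[\frg,\frg]$, and in a Kodaira-type algebra $[\frg,\frg]$ may be strictly smaller than $\frz$ (already in dimension four, $\mathfrak{rh}_3$ has $\dim[\frg,\frg]=1$ while $\dim\frz=2$). From $[\frg,\frg]\subseteq\frz$ and $[\frg,\frg]\perp\frz$ you get $[\frg,\frg]\subseteq\frz\cap\frz^\perp$, which is compatible with $g|_{\frz}$ being merely degenerate rather than identically zero. Your sketch (decompose along $\frg^+\oplus\frg^-$ and use that $J$ interchanges the projections $\pi^\pm(\frz)$) does not by itself produce any new constraint on $g(Z_1^+,Z_2^-)$; one still needs some input that goes beyond the closedness of the three forms---for instance the full parallelism of $E$ (equivalently, the fact that $\frg^\pm$ are Lagrangian \emph{subalgebras} carrying flat torsion-free connections, as in Andrada's description \cite{And06}), or a structural argument showing that any non-isotropic part of $\frz$ splits off as an abelian hypersymplectic direct factor. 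Until that step is supplied, the proof is incomplete.
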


There is another recurring type of hypersymplectic structure for which it will be useful to have a definition. First let us recall that a complex structure $J$ is called \emph{abelian} if $$[JX,JY]=[X,Y].$$

Similarly, a para-complex structure $E$ will be called \emph{abelian} if $$[EX,EY]=-[X,Y].$$

It was shown in \cite[Proposition~6.1]{AD06} that if $(\frg,g,J,E)$ is hypersymplectic, then $J$ is abelian if and only if $E$ is abelian. Therefore, we give the following definition.

\begin{definition}
    Let $(\frg,g,J,E)$ be a hypersymplectic Lie algebra. We say that the hypersymplectic structure is \emph{abelian} if $J$ or $E$ is abelian. We will then refer to $(\frg,g,J,E)$ as a hypersymplectic Lie algebra of \emph{abelian type}.
\end{definition}

Although this nomenclature is not standard, we use it to distinguish a hypersymplectic Lie algebra of abelian type, which does not necessarily have an underlying abelian Lie algebra, from the abelian hypersymplectic Lie algebra, which is $\R^{4n}$ equipped with the canonical hypersymplectic structure as in Example~\ref{ex:flat_HS_R4}.

\begin{remark}
\label{remark:structureofabeliancomplex}
    If a real Lie algebra admits an abelian complex structure $J$, then it must be $2$-step solvable (see \cite[p.~235]{ABD11}, \cite{Pet88}). Furthermore, its center is even-dimensional, as it is $J$-invariant.
\end{remark}

In \cite[Theorem~2]{BS18} the authors classify the $8$-dimensional hypersymplectic Lie algebras of abelian type and in \cite[p.~15]{AD06} it is pointed out that all the possible hypersymplectic Lie algebras of abelian type are constructed as in \cite[Theorem~2.1]{AD06}. This observation, together with \cite[Theorem~4.2]{AD06}, gives us the following.

\begin{proposition}[Andrada-Dotti]\label{prop:2-step_abelian_flat}
    Let $(\frg,g,J,E)$ be a hypersymplectic Lie algebra of abelian type. If $\frg$ is $2$-step nilpotent, then the metric $g$ is flat.
\end{proposition}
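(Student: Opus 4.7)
The plan is to combine the Andrada-Dotti structural characterization of hypersymplectic Lie algebras of abelian type with their explicit curvature formula.

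First, I would exploit the $\pm1$ eigenspace decomposition $\frg=\frg_+\oplus\frg_-$ induced by the para-complex structure $E$. Since $g(E\cdot,E\cdot)=-g$, both $\frg_\pm$ are totally isotropic, hence Lagrangian of real dimension $2n$, while the anticommutation $JE=-EJ$ forces $J$ to interchange the two summands. The abelian condition $[EX,EY]=-[X,Y]$ specialized to $X,Y\in\frg_+$ yields $[X,Y]=-[X,Y]=0$, and symmetrically $\frg_-$ is abelian. Thus the entire Lie bracket is encoded by a pair of bilinear maps $\alpha\colon\frg_+\times\frg_-\to\frg_+$ and $\beta\colon\frg_+\times\frg_-\to\frg_-$, and $[\frg,\frg]=\alpha(\frg_+,\frg_-)+\beta(\frg_+,\frg_-)$.

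Next I would invoke the structure theorem of \cite[Theorem~2.1]{AD06} (applied as in the observation at \cite[p.~15]{AD06}), which realises every hypersymplectic Lie algebra of abelian type via a \emph{double Lie algebra} construction starting from a flat torsion-free symplectic connection $\nabla$ on $\frg_+\cong\R^{2n}$; in this language, $\alpha$ and $\beta$ become explicit expressions in $\nabla$ and its adjoint with respect to the pairing induced by $g$. Under this identification the 2-step nilpotency hypothesis $[\frg,[\frg,\frg]]=0$ translates into a concrete vanishing condition on the iterated action of $\nabla$, to the effect that the associated representation $\frg_-\to\End(\frg_+)$ has image consisting of endomorphisms whose relevant compositions annihilate everything. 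I would then invoke the curvature formula \cite[Theorem~4.2]{AD06}, which writes the Riemann tensor of the hypersymplectic metric as a sum of terms built from the curvature of $\nabla$ (zero by construction) and from iterated compositions of its associated representation; under the 2-step translation above, all remaining terms vanish, giving $R\equiv 0$.

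The main obstacle is the honest translation of 2-step nilpotency into the affine-symplectic data and the term-by-term verification that the Andrada-Dotti curvature formula collapses under that translation. If this indirect route proves unwieldy I would fall back on a direct computation: Koszul's formula, combined with the Lagrangian property $\frg_\pm^\perp=\frg_\pm$ and the fact that $[\frg,\frg]$ lies in the center of $\frg$, quickly pins down the Levi-Civita connection of $\tilde g$ in block form. The remaining symmetries afforded by $J$ (which swaps $\frg_\pm$) and $E$ (which preserves them) then reduce the curvature identity $R(X,Y)Z=0$ to a small number of cases, each controlled by the vanishing of iterated brackets that is precisely the 2-step hypothesis.
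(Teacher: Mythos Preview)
Your proposal is correct and matches the paper's own approach: the paper does not give an independent proof but simply attributes the result to Andrada--Dotti, pointing to the structural characterization \cite[Theorem~2.1]{AD06} together with the curvature formula \cite[Theorem~4.2]{AD06}, which is precisely the route you outline. Your write-up is in fact more detailed than the paper's, since you spell out the eigenspace decomposition, the translation of $2$-step nilpotency into the affine-symplectic data, and a direct fallback computation.
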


\begin{remark}
    This is not longer true if $\frg$ is $3$-step nilpotent (see \cite{AD06,BS18}).
\end{remark}

\begin{remark}
    In dimension 4, the only non-abelian nilpotent hypersymplectic Lie algebra is both of Kodaira type and of abelian type. However, in general hypersymplectic Lie algebras of Kodaira type are not a subclass of hypersymplectic Lie algebras of abelian type. Indeed, the $8$-dimensional hypersymplectic Lie algebra of Kodaira type constructed in \cite[Example~2]{FPPS04} is not of abelian type.
\end{remark}

\subsection{Construction and examples}

Now we can proceed with the construction of hypersymplectic structures. Given the semidirect product $\tilde\frg=\frg\rtimes_\varphi\frh$ with the pseudo-K\"ahler structure $(\tilde g,\tilde J)$ constructed in Theorem~\ref{thm:construction}, we are interested in finding an almost para-complex structure $\tilde E$ on $\tilde\frg$ which is parallel with respect to the Levi-Civita connection $\tilde\nabla$ of $\tilde g$ so that $(\tilde\frg,\tilde g,\tilde J,\tilde E)$ is a hypersymplectic Lie algebra. For this, let us consider an almost para-complex structure $$\tilde E=\begin{pmatrix}
    E_1&E_2\\
    E_3&E_4
\end{pmatrix}\in\End(\tilde\frg),$$ where $E_1\in\End(\frg)$, $E_2\in\Hom(\frh,\frg)$, $E_3\in\Hom(\frg,\frh)$ and $E_4\in\End(\frh)$. The next theorem gives us the conditions for such $\tilde E$ to be parallel.

\begin{theorem}\label{thm:general_HS}
    Let $(\frg,g,J_g)$ and $(\frh,h,J_h)$ be pseudo-K\"ahler Lie algebras and let $\varphi:\frh\to\Der(\frg)$ be a representation satisfying the conditions of Theorem~\ref{thm:construction}. Suppose that $(\tilde g,\tilde J,\tilde E)$ is an almost hypersymplectic structure on $\tilde\frg$. Then $(\tilde\frg,\tilde g,\tilde J,\tilde E)$ is a hypersymplectic Lie algebra if and only if \begin{itemize}
        \itemsep 0em
        \item $\nabla^hE_4=0$,
        \item $\varphi(A)^a\circ E_2=E_2\circ\nabla^h_A$,
        \item $\varphi(A)^a\circ E_1=E_1\circ\varphi(A)^a$,
        \item $\varphi(B)^sE_2C=\varphi(C)^sE_2B$,
        \item $g(\nabla^g_XY,E_2C)=g((E_1\circ\varphi(C)^s-\varphi(E_4C)^s)X,Y)$,
        \item $g((\nabla^g_XE_1)Y,Z)=g(\varphi(E_3Y)^sZ-\varphi(E_3Z)^sY,X)$.
    \end{itemize}
\end{theorem}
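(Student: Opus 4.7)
The plan is to exploit the fact that $\tilde J$ is already parallel by Theorem~\ref{thm:construction}, which reduces the hypersymplectic condition to $\tilde\nabla\tilde E=0$. To test this, I would compute $(\tilde\nabla_W\tilde E)V$ for all four combinations $W,V\in\frg$ or $\frh$, decompose each result into its $\frg$- and $\frh$-components, and match the resulting equations (eight in total) against the six listed conditions. Before starting, the almost hypersymplectic compatibility $\tilde E^*=-\tilde E$ (which follows from $\tilde E^2=\Id$ together with $\tilde g(\tilde E\cdot,\tilde E\cdot)=-\tilde g$) should be unpacked into the block relations $E_1^*=-E_1$, $E_4^*=-E_4$, $E_3=-E_2^*$; these will be used throughout.

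Next, write $\tilde\nabla_XY=\nabla^g_XY+\mu(X,Y)$, where $\mu(X,Y)\in\frh$ is the vector defined by $h(\mu(X,Y),C)=g(\varphi(C)^sX,Y)$ via Lemma~\ref{lemma:tilde_nabla_XY}; the other three covariant derivatives are given directly by the same lemma. Expanding $(\tilde\nabla_W\tilde E)V=\tilde\nabla_W(\tilde EV)-\tilde E(\tilde\nabla_WV)$ in the four cases should then produce the following. From $(\tilde\nabla_A\tilde E)B=0$, the $\frh$-part yields $\nabla^hE_4=0$ and the $\frg$-part yields $\varphi(A)^aE_2=E_2\nabla^h_A$. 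From $(\tilde\nabla_A\tilde E)Y=0$, the $\frg$-part yields $[\varphi(A)^a,E_1]=0$. From $(\tilde\nabla_X\tilde E)B=0$, the $\frg$-part (paired with $Y\in\frg$ and using metric compatibility $g(\nabla^g_X(E_2B),Y)=-g(E_2B,\nabla^g_XY)$) gives condition~5, while the $\frh$-part (paired with $C\in\frh$ and using $E_3=-E_2^*$) gives condition~4. From $(\tilde\nabla_X\tilde E)Y=0$, the $\frg$-part (paired with $Z\in\frg$ and expanded via the definition of $\mu$) gives condition~6.

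The main obstacle is verifying that the two leftover component equations give no new information. For the $\frh$-component of $(\tilde\nabla_A\tilde E)Y=0$ one obtains $\nabla^h_AE_3=E_3\varphi(A)^a$, which is precisely the $g$-adjoint of condition~2 (using $E_3=-E_2^*$ together with the $g$-skew-symmetry of $\varphi(A)^a$ and the $h$-skew-symmetry of $\nabla^h_A$). For the $\frh$-component of $(\tilde\nabla_X\tilde E)Y=0$, pairing with $C\in\frh$ and substituting condition~5 for the cross-term $g(\nabla^g_XY,E_2C)$ causes two summands to cancel, leaving $g(\varphi(C)^sX,E_1Y)+g(E_1\varphi(C)^sX,Y)=0$, which is immediate from $E_1^*=-E_1$. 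Apart from this bookkeeping, the proof is routine expansion via Lemma~\ref{lemma:tilde_nabla_XY}.
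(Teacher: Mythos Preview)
Your proposal is correct and follows essentially the same route as the paper's proof: both compute $(\tilde\nabla_W\tilde E)V$ in the four cases $W,V\in\frg\cup\frh$ via Lemma~\ref{lemma:tilde_nabla_XY}, split into $\frg$- and $\frh$-components, and verify that the two redundant equations reduce to the others using $E_3=-E_2^*$ and $E_1^*=-E_1$. The only cosmetic difference is that you introduce the map $\mu$ and defer the redundancy checks to the end, whereas the paper pairs everything against $Z+C$ from the start and notes the equivalences as they arise.
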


\begin{proof}
    We need to determine the conditions for $\tilde E$ being $\tilde\nabla$-parallel. We have to consider four cases depending on where the vectors $u,v\in\tilde\frg$ on $(\tilde\nabla_u\tilde E)v$ lie. We will make use of Lemma~\ref{lemma:tilde_nabla_XY} throughout the proof.\medskip

    \textbullet\textbf{Case $u,v\in\frh$:} Set $u=A,v=B$. We compute \begin{align*}
        (\tilde\nabla_A\tilde E)B&=\tilde\nabla_A(\tilde EB)-\tilde E(\tilde\nabla_AB)\\
        &=\tilde\nabla_A(E_2B)+\tilde\nabla_A(E_4B)-E_2(\tilde\nabla_AB)-E_4(\tilde\nabla_AB)\\
        &=\varphi(A)^aE_2B+\nabla^h_AE_4B-E_2(\nabla^h_AB)-E_4(\nabla^h_AB).
    \end{align*}

    This term is zero if and only if $\varphi(A)^a\circ E_2=E_2\circ\nabla^h_A$ for all $A\in\frh$ and $\nabla^hE_4=0$.\medskip

    \textbullet\textbf{Case $u\in\frh$ and $v\in\frg$:} Set $u=A,v=Y$. We compute \begin{align*}
        (\tilde\nabla_A\tilde E)Y&=\tilde\nabla_A(\tilde EY)-\tilde E(\tilde\nabla_AY)\\
        &=\tilde\nabla_A(E_1Y)+\tilde\nabla_A(E_3Y)-E_1(\tilde\nabla_AY)-E_3(\tilde\nabla_AY)\\
        &=\varphi(A)^aE_1Y+\nabla^h_AE_3Y-E_1(\varphi(A)^aY)-E_3(\varphi(A)^aY).
    \end{align*}

    This term is zero if and only if $\varphi(A)^a\circ E_1=E_1\circ\varphi(A)^a$ and $\nabla^h_A\circ E_3=E_3\circ\varphi(A)^a$ for all $A\in\frh$.\medskip

    The condition $\nabla^h_A\circ E_3=E_3\circ\varphi(A)^a$ is equivalent to $\varphi(A)^a\circ E_2=E_2\circ\nabla^h_A$. Indeed, $\tilde E^*=-\tilde E$ implies that $E_3=-h^{-1}E_2^\top g$. The claimed equivalence follows from this and the fact that $\varphi(A)^a$ is $g$-antisymmetric and $\nabla^h_A$ is $h$-antisymmetric.\medskip

    \textbullet\textbf{Case $u\in\frg$ and $v\in\frh$:} Set $u=X,v=B$. Let $Z\in\frg$ and $C\in\frh$. We compute $$\tilde g((\tilde\nabla_X\tilde E)B,Z+C)=\tilde g(\tilde\nabla_X(\tilde EB),Z+C)-\tilde g(\tilde E(\tilde\nabla_XB),Z+C),$$ where \begin{align*}
        \tilde g(\tilde\nabla_X(\tilde EB),Z+C)&=\tilde g(\tilde\nabla_XE_2B,Z+C)+\tilde g(\tilde\nabla_XE_4B,Z+C)\\
        &=g(\nabla^g_XE_2B,Z)+g(\varphi(C)^sX,E_2B)-g(\varphi(E_4B)^sX,Z),\\
        \tilde g(\tilde E(\tilde\nabla_XB),Z+C)&=-\tilde g(\tilde\nabla_XB,\tilde EZ+\tilde EC)\\
        &=\tilde g(\varphi(B)^sX,E_1Z+E_3Z+E_2C+E_4C)\\
        &=g(\varphi(B)^sX,E_1Z)+g(\varphi(B)^sX,E_2C).
    \end{align*}

    Hence we have \begin{align*}
        \tilde g((\tilde\nabla_X\tilde E)B,Z+C)&=g(\nabla^g_XE_2B,Z)+g(\varphi(C)^sX,E_2B)-g(\varphi(E_4B)^sX,Z)\\
        &\quad-g(\varphi(B)^sX,E_1Z)-g(\varphi(B)^sX,E_2C)\\
        &=g(\nabla^g_XE_2B,Z)+g((E_1\circ\varphi(B)^s-\varphi(E_4B)^s)X,Z)\\
        &\quad+g(\varphi(C)^sX,E_2B)-g(\varphi(B)^sX,E_2C).
    \end{align*}

    This term is zero if and only if \begin{align*}
        g(\nabla^g_XE_2B,Z)&=g((\varphi(E_4B)^s-E_1\circ\varphi(B)^s)X,Z),\\
        g(\varphi(C)^sX,E_2B)&=g(\varphi(B)^sX,E_2C).
    \end{align*} The second equation is equivalent to $\varphi(B)^sE_2C=\varphi(C)^sE_2B$ for all $B,C\in\frh$.\medskip

    \textbullet\textbf{Case $u,v\in\frg$:} Set $u=X,v=Y$. We compute $$\tilde g((\tilde\nabla_X\tilde E)Y,Z+C)=\tilde g(\tilde\nabla_X(\tilde EY),Z+C)-\tilde g(\tilde E(\tilde\nabla_XY),Z+C),$$ where \begin{align*}
        \tilde g(\tilde\nabla_X(\tilde EY),Z+C)&=\tilde g(\tilde\nabla_XE_1Y,Z+C)+\tilde g(\tilde\nabla_XE_3Y,Z+C)\\
        &=g(\nabla^g_XE_1Y,Z)+g(\varphi(C)^sX,E_1Y)-g(\varphi(E_3Y)^sX,Z),\\
        \tilde g(\tilde E(\tilde\nabla_XY),Z+C)&=-\tilde g(\tilde\nabla_XY,\tilde EZ+\tilde EC)\\
        &=-\tilde g(\tilde\nabla_XY,E_1Z+E_3Z+E_2C+E_4C)\\
        &=-g(\nabla_X^gY,E_1Z)-g(\nabla_X^gY,E_2C)\\
        &\quad-g(\varphi(E_3Z)^sX,Y)-g(\varphi(E_4C)^sX,Y).
    \end{align*}

    Hence we have \begin{align*}
        \tilde g((\tilde\nabla_X\tilde E)Y,Z+C)&=g(\nabla^g_XE_1Y,Z)+g(\varphi(C)^sX,E_1Y)-g(\varphi(E_3Y)^sX,Z)\\
        &\quad+g(\nabla_X^gY,E_1Z)+g(\nabla_X^gY,E_2C)\\
        &\quad+g(\varphi(E_3Z)^sX,Y)+g(\varphi(E_4C)^sX,Y)\\
        &=g(\nabla_X^gY,E_2C)+g((\varphi(E_4C)^s-E_1\circ\varphi(C)^s)X,Y)\\
        &\quad+g((\nabla^g_XE_1)Y,Z)-g(\varphi(E_3Y)^sX,Z)+g(\varphi(E_3Z)^sX,Y).
    \end{align*}

    This term is zero if and only if \begin{align*}
        g(\nabla^g_XY,E_2C)&=g((E_1\circ\varphi(C)^s-\varphi(E_4C)^s)X,Y),\\
        g((\nabla^g_XE_1)Y,Z)&=g(\varphi(E_3Y)^sX,Z)-g(\varphi(E_3Z)^sX,Y)\\
        &=g(\varphi(E_3Y)^sZ-\varphi(E_3Z)^sY,X).
    \end{align*}

    The first equation above is equivalent to the condition $$g(\nabla^g_XE_2B,Z)=g((\varphi(E_4B)^s-E_1\circ\varphi(B)^s)X,Z),$$ since $\nabla^g_X$ is $g$-antisymmetric.
\end{proof}

We consider a couple of particular cases in the following corollary.

\begin{corollary}\label{cor:HS}
    In the situation of Theorem~\ref{thm:general_HS}: \begin{enumerate}[\normalfont(1)]
        \itemsep 0em
        \item If $E_2=E_3=0$, then $(\frg,g,J_g,E_1)$ and $(\frh,h,J_h,E_4)$ are hypersymplectic Lie algebras and $\varphi(A)^s=0$ for all $A\in\frh$.
        \item If $E_1\neq0$, $E_4\neq0$ and $\frg$ is abelian, then $\varphi(A)^s=0$ for all $A\in\frh$.
    \end{enumerate}
\end{corollary}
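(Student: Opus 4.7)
The plan is to derive both statements from the fifth condition of Theorem~\ref{thm:general_HS},
\[
g(\nabla^g_X Y, E_2 C) = g\bigl((E_1 \varphi(C)^s - \varphi(E_4 C)^s) X, Y\bigr),
\]
observing that in part~(1) the hypothesis $E_2 = 0$ kills the left-hand side, while in part~(2) the abelianness of $\frg$ forces $\nabla^g = 0$ and again the left-hand side vanishes. In both cases one is left with the single identity
\[
E_1\, \varphi(C)^s = \varphi(E_4 C)^s, \qquad C \in \frh.
\]

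The core step will be a $J_g$-symmetry argument. By Theorem~\ref{thm:construction} the symmetric part $\varphi(C)^s$ anticommutes with $J_g$, and the $(1,1)$-block of the compatibility $\tilde J\tilde E + \tilde E\tilde J = 0$ reads $J_g E_1 + E_1 J_g = 0$. Consequently the composition $E_1\varphi(C)^s$ actually \emph{commutes} with $J_g$, whereas its alleged equal $\varphi(E_4 C)^s$ anticommutes with $J_g$. Since an operator that both commutes and anticommutes with the invertible complex structure $J_g$ must be zero, I would conclude
\[
E_1\, \varphi(C)^s = 0 \quad\text{and}\quad \varphi(E_4 C)^s = 0 \qquad \text{for all } C \in \frh,
\]
with the companion identity $\varphi(C)^s E_1 = 0$ following by taking $g$-adjoints.

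For part~(1), the block-diagonal form of $\tilde E$ decouples $\tilde E^2 = \Id$, $\tilde E\tilde J + \tilde J\tilde E = 0$ and $\tilde g(\tilde E\cdot,\tilde E\cdot) = -\tilde g$ into the corresponding almost-hypersymplectic compatibilities for $(\frg,g,J_g,E_1)$ and $(\frh,h,J_h,E_4)$; in particular $E_1^2 = \Id_\frg$, so $E_1$ is invertible and $E_1\varphi(C)^s = 0$ yields $\varphi(C)^s = 0$ at once. Parallelism of $E_1$ and $E_4$ is read off by specialising conditions~(1) and~(6) of Theorem~\ref{thm:general_HS}, which become $\nabla^h E_4 = 0$ and, once $E_3 = 0$, $\nabla^g E_1 = 0$. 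The main obstacle will be in part~(2), where $E_1^2 = \Id_\frg - E_2 E_3$ need not be the identity and $E_1$ can have a non-trivial kernel, so $E_1\varphi(C)^s = 0$ alone does not conclude. To close the gap I plan to combine condition~(4) of Theorem~\ref{thm:general_HS} with the relation $E_2 E_3 = \Id_\frg - E_1^2$ to derive the factorisation $\varphi(C)^s X = \varphi(E_3 X)^s\, E_2 C$, and then use the symmetry $\varphi(E_3 X)^s Y = \varphi(E_3 Y)^s X$ from condition~(6) together with the already-established vanishings $\varphi(\cdot)^s \circ E_4 = 0$ and $\varphi(\cdot)^s\, E_1 = 0$ to propagate $\varphi(C)^s = 0$ from $\im E_4$ to all of $\frh$; the assumptions $E_1 \neq 0$ and $E_4 \neq 0$ here rule out the degenerate off-diagonal situation in which condition~(5) is vacuous.
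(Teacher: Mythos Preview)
Your argument for part~(1) is correct and is essentially the paper's proof: both arrive at $E_1\varphi(C)^s=0$ via an anticommutation trick and then use that $E_1$ is invertible (since $E_2=E_3=0$ forces $E_1^2=\Id_{\frg}$). Your route through $J_g$ is marginally cleaner than the paper's route through $E_1$, but the idea is identical.

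For part~(2), you are right that $E_1$ need not be invertible and that the paper's phrase ``in the same way as in part~(1)'' glosses over exactly this point; the paper's argument, read literally, only yields $E_1\varphi(C)^s=0$ and $\varphi(E_4C)^s=0$. However, your proposed completion does not close the gap either. The factorisation $\varphi(C)^s X=\varphi(E_3X)^s E_2C$ is correctly derived from condition~(4) together with $E_2E_3=\Id_{\frg}-E_1^2$ and $\varphi(C)^sE_1=0$. But when you then try to ``propagate'' using the symmetry $\varphi(E_3X)^sY=\varphi(E_3Y)^sX$ from condition~(6) and the vanishings on $\operatorname{im}E_4$ and $\operatorname{im}E_1$, every route collapses to a tautology. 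For instance, feeding the factorisation back into itself via $E_3E_2=\Id_{\frh}-E_4^2$ gives
\[
\varphi(C)^sX=\varphi(E_3X)^sE_2C=\varphi(E_3E_2C)^sE_2E_3X=\varphi(C)^s(\Id-E_1^2)X=\varphi(C)^sX,
\]
and combining the symmetry with $E_3E_1=-E_4E_3$ only reproduces $\varphi(E_3Y)^s|_{\operatorname{im}E_1}=0$, which is already known. The hypotheses $E_1\neq0$, $E_4\neq0$ are never actually engaged in your sketch beyond a vague remark, and one does need them: the purely off-diagonal case $E_1=E_4=0$ admits nonzero $\varphi(C)^s$. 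So the ``propagation'' step, as written, is circular; part~(2) still needs a genuine argument that exploits $E_1\neq0$ and $E_4\neq0$ in an essential way.
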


\begin{proof}
    (1) In this situation, $(g,J_g,E_1)$ and $(h,J_h,E_4)$ are almost hypersymplectic structures, and since $\nabla^gE_1=0$ and $\nabla^hE_4=0$, then they define hypersymplectic structures on $\frg$ and $\frh$, respectively. Moreover, the condition $E_1\circ\varphi(A)^s=\varphi(E_4A)^s$, together with $E_1^*=-E_1$ and $J_gE_1=-E_1J_g$, implies that $\varphi(A)^s=0$ for all $A\in\frh$. Indeed, since $J_g\circ\varphi(A)^s=\varphi(J_hA)^s$ is $g$-symmetric, we have \begin{equation}\label{eqn:JanticommutesphiA}
        J_g\circ\varphi(A)^s=(J_g\circ \varphi(A)^s)^*=\varphi(A)^s\circ J_g^*=-\varphi(A)^s\circ J_g.
    \end{equation}

    Similarly, $E_1\circ\varphi(A)^s=-\varphi(A)^s\circ E_1.$ Hence, if $B=J_hA$, then $$E_1\circ\varphi(B)^s=E_1J_g\circ\varphi(A)^s=-J_gE_1\circ\varphi(A)^s=J_g\circ\varphi(A)^s\circ E_1=\varphi(B)^s\circ E_1.$$

    We have that $E_1\circ\varphi(B)^s=-\varphi(B)^s\circ E_1$ for all $B\in\frh$. Therefore $\varphi(B)^s=0$ for all $B\in\frh$.\medskip

    (2) Since $\frg$ is abelian, we have that $\nabla^g_XY=0$ for all $X,Y\in\frg$, and this implies that $E_1\circ\varphi(B)^s=\varphi(E_4B)^s$. Note that $E_1^*=-E_1$ and $J_gE_1=-E_1J_g$. Hence we conclude that $\varphi(A)^s=0$ for all $A\in\frh$ in the same way as in part (1).
\end{proof}

It is worth noting that if we try to mimic Theorem~\ref{thm:construction} starting with two hypersymplectic Lie algebras, then we always obtain a trivial extension. Indeed, let $(\frg,g,J_g,E_g)$ and $(\frh,h,J_h,E_h)$ be hypersymplectic Lie algebras and take $\tilde E$ with $E_1=E_g$, $E_4=E_h$ and $E_2=0=E_3$. Then Theorem~\ref{thm:general_HS} says that $(\tilde\frg,\tilde g,\tilde J,\tilde E)$ is hypersymplectic if and only if the representation $\varphi:\frh\to\Der(\frg)$ defining the semidirect product satisfies \begin{itemize}
    \itemsep 0em
    \item $J_g\circ\varphi(A)^s=\varphi(J_hA)^s$, $J_g\circ\varphi(A)^a=\varphi(A)^a\circ J_g$,
    \item $E_g\circ\varphi(A)^s=\varphi(E_hA)^s$, $E_g\circ\varphi(A)^a=\varphi(A)^a\circ E_g$
\end{itemize} for all $A\in\frh$. We have seen in the proof of Corollary~\ref{cor:HS}~(1) that in this situation $\varphi(A)^s=0$ for all $A\in\frh$, i.e.\ the extension is trivial.\medskip

If we let the $g$-symmetric part of the representation $\varphi$ being zero, that is $\varphi(A)^s=0$ for all $A\in\frh$, then we can find several examples of hypersymplectic Lie algebras.

\begin{corollary}
    If $\frg$ and $\frh$ are abelian hypersymplectic Lie algebras and $\varphi:\frh\to\End(\frg)$ takes values in the Lie algebra $\lie{sp}(2n,\R)$ of $g$-antisymmetric endomorphisms that commute with $J_g$ and $E_g$, then the semidirect product $\frg\rtimes_\varphi\frh$ is hypersymplectic and flat.
\end{corollary}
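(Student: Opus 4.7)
The plan is to apply Theorem~\ref{thm:general_HS} with the block-diagonal ansatz $E_1 = E_g$, $E_4 = E_h$, and $E_2 = E_3 = 0$, and to verify flatness directly from Lemma~\ref{lemma:tilde_nabla_XY}. First I would check that Theorem~\ref{thm:construction} applies, so that the pseudo-K\"ahler extension exists: the hypothesis $\varphi(A) \in \lie{sp}(2n,\R)$ forces $\varphi(A)^s = 0$, so the first condition of Theorem~\ref{thm:construction} reads $0 = 0$, while the second is exactly the assumption that $\varphi(A)$ commutes with $J_g$. Compatibility of $\tilde E = E_g \oplus E_h$ with $(\tilde g, \tilde J)$ as an almost para-complex structure follows from the corresponding properties on each factor: $\tilde E^2 = \Id$, $\tilde J \tilde E = -\tilde E \tilde J$ and $\tilde g(\tilde E\cdot, \tilde E\cdot) = -\tilde g$ all split along the direct sum.

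Next I would run through the six conditions of Theorem~\ref{thm:general_HS}. Abelianness of $\frh$ gives $\nabla^h = 0$, which takes care of condition~(i) and, together with $E_2 = 0$, condition~(ii). Condition~(iii) is exactly the assumption that $\varphi(A)$ commutes with $E_g$. Conditions~(iv)--(vi) reduce to $0 = 0$ thanks to the combination $E_2 = E_3 = 0$, $\varphi(A)^s = 0$, and $\nabla^g = 0$ (since $\frg$ is abelian). So all six conditions hold and $(\tilde\frg, \tilde g, \tilde J, \tilde E)$ is hypersymplectic.

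For flatness, Lemma~\ref{lemma:tilde_nabla_XY} shows that, under these hypotheses, the only nonzero component of $\tilde\nabla$ is $\tilde\nabla_A Y = \varphi(A) Y$ for $A \in \frh$ and $Y \in \frg$. A case-by-case analysis of $\tilde R(U,V) W$ then shows every component vanishes: the only potentially nontrivial case is
\[
\tilde R(A, B) Y = \tilde\nabla_A \tilde\nabla_B Y - \tilde\nabla_B \tilde\nabla_A Y - \tilde\nabla_{[A,B]} Y = [\varphi(A), \varphi(B)] Y = \varphi([A,B]_\frh) Y = 0,
\]
using that $\varphi$ is a Lie algebra homomorphism and $\frh$ is abelian; all other combinations of arguments hit a $\tilde\nabla$ that is identically zero. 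The only real ``obstacle'' in the proof is the bookkeeping of the six conditions of Theorem~\ref{thm:general_HS} and the mixed cases of curvature; no analytical difficulty arises, because the hypotheses on $\varphi$ are precisely tailored to make every term trivialize.
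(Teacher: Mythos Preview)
Your proof is correct. The hypersymplectic part matches the paper's approach exactly: set $E_1=E_g$, $E_4=E_h$, $E_2=E_3=0$ and invoke Theorem~\ref{thm:general_HS} (the paper states this in one line, whereas you spell out each condition).

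For flatness, however, you take a different route. The paper appeals to Proposition~\ref{prop:pseudoAzencottWilson} (the Azencott--Wilson argument): since $\varphi(A)^s=0$ for all $A$, the extension is AW-related to the direct product $\frg\times\frh$, hence the simply connected Lie group $\tilde G$ is isometric to the flat $\R^{4(n+m)}$. Your argument instead reads off from Lemma~\ref{lemma:tilde_nabla_XY} that the only surviving covariant derivative is $\tilde\nabla_A Y=\varphi(A)Y$, and then checks the curvature case by case. The paper's route is more conceptual and reusable (it explains \emph{why} flatness holds: the extension is trivial in the sense of Definition~\ref{def:pseudokahlerextension}); your route is more self-contained and avoids invoking the Lie-group-level isometry. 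One small point worth making explicit in your write-up: in the mixed case $\tilde R(X,A)Y$ the bracket term $\tilde\nabla_{[X,A]_{\tilde\frg}}Y=\tilde\nabla_{-\varphi(A)X}Y$ vanishes because $-\varphi(A)X\in\frg$ and $\tilde\nabla_\frg|_\frg=0$; this is covered by your blanket statement but is the one place where the Lie bracket of $\tilde\frg$ actually enters.
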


\begin{proof}
    We set $E_1=E_g$, $E_4=E_h$ and apply Theorem~\ref{thm:general_HS}. The Azencott-Wilson theorem shows that the resulting structure is isometric to a direct product (see Proposition~\ref{prop:pseudoAzencottWilson}), so it is flat.
\end{proof}

\begin{example}\label{ex:nonkodaira}
    A $2$-step nilpotent hypersymplectic Lie algebra is given by $\R^4\rtimes_\varphi\R^4$, where $$\varphi(a_1)=\begin{pmatrix}
        0 & 1 & 0 & 0\\
        0 & 0 & 0 & 0\\
        0 & 0 &0 & 1\\
        0 & 0 & 0 & 0
    \end{pmatrix},\quad\varphi(a_2)=\varphi(a_3)=\varphi(a_4)=0.$$

    In $\R^4$ we have put the flat hypersymplectic structure of Example~\ref{ex:flat_HS_R4}. Its center is given by $\frz(\tilde\frg)=\escal{e_1,e_3,a_2,a_3,a_4}$, so $\tilde\frg$ is not of Kodaira type. Moreover the complex structure $\tilde J$ is not abelian. This example is generalized in Example~\ref{ex:not_Kodaira_type}.
\end{example}

\begin{example}
    More generally, if $\varphi:\R^{4n}\to\lie{sp}(2m,\R)\subset\lie{gl}(4m,\R)$ is any linear map taking values in an abelian subalgebra of nilpotent matrices, we obtain a hypersymplectic structure on the nilpotent Lie algebra $\R^{4m}\rtimes_\varphi\R^{4n}$.
\end{example}

\begin{example}
    Some $2$-step solvable hypersymplectic Lie algebras can be obtained as $\R^4\rtimes_\varphi \R^4$, with $$\varphi(a_1)=\begin{pmatrix}
        1 & 0 & 0 & 0\\
        0 & -1 & 0 & 0\\
        0 & 0 & 1 & 0\\
        0 & 0 & 0 & -1
    \end{pmatrix},\quad\varphi(a_2)=\varphi(a_3)=\varphi(a_4)=0,$$ where on $\R^4$ we have put the hypersymplectic structure of Example~\ref{ex:flat_HS_R4}, or $\R^4\rtimes_\varphi \R^4$, with $$\varphi(a_1)=\begin{pmatrix}
        0 & 1 & 0 & 0\\
        -1 & 0 & 0 & 0\\
        0 & 0 & 0 & 1\\
        0 & 0 & -1 & 0
    \end{pmatrix},\quad\varphi(a_2)=\varphi(a_3)=\varphi(a_4)=0,$$ which is not completely solvable. Another variation is imposing $\varphi(a_4)=\varphi(a_1)$, whilst keeping the others, so that $\ker\varphi$ is non-degenerate.
\end{example}

We can also consider the case where one or two of the starting hypersymplectic Lie algebras are not abelian. The following can be seen as a converse of the first item of Corollary~\ref{cor:HS}.

\begin{corollary}\label{cor:HS_phi^s=0}
    Let $(\frg,g,J_g,E_g)$ and $(\frh,h,J_h,E_h)$ be hypersymplectic Lie algebras. Let $\varphi:\frh\to\Der(\frg)$ be a representation such that $\varphi(A)^s=0$ and $\varphi(A)^a$ commutes with $J_g$ and $E_g$ for all $A\in\frh$. Then $(\tilde\frg,\tilde g,\tilde J,\tilde E)$ is a hypersymplectic Lie algebra.
\end{corollary}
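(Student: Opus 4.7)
The plan is to apply Theorem~\ref{thm:general_HS} directly, with the block-diagonal candidate $\tilde E$ given by $E_1=E_g$, $E_4=E_h$ and $E_2=0=E_3$. First I would check that $(\tilde g,\tilde J,\tilde E)$ is an almost hypersymplectic structure on $\tilde\frg$. Since $\tilde J$, $\tilde E$ and $\tilde g$ are all block-diagonal for the splitting $\tilde\frg=\frg\oplus\frh$, the required identities $\tilde E^2=\mathbbm{1}$, $\tilde J\tilde E=-\tilde E\tilde J$ and $\tilde g(\tilde E\cdot,\tilde E\cdot)=-\tilde g$ decouple into the corresponding identities for $(g,J_g,E_g)$ on $\frg$ and for $(h,J_h,E_h)$ on $\frh$, which hold because both summands are hypersymplectic. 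Moreover, the hypotheses $\varphi(A)^s=0$ and $[\varphi(A)^a,J_g]=0$ make the two conditions of Theorem~\ref{thm:construction} trivially satisfied, so that $(\tilde g,\tilde J)$ is a pseudo-K\"ahler structure and Theorem~\ref{thm:general_HS} is applicable.

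Next I would run through the six conditions of Theorem~\ref{thm:general_HS} one by one with $E_2=0=E_3$ and $\varphi(A)^s=0$. The identity $\nabla^h E_4=0$ is the hypersymplectic condition $\nabla^h E_h=0$. The second and fourth conditions involve $E_2$ and so are automatic. The third condition $\varphi(A)^a\circ E_1=E_1\circ\varphi(A)^a$ is precisely the assumption $[\varphi(A)^a,E_g]=0$. The fifth condition has left-hand side zero (since $E_2=0$) and right-hand side zero (since both $\varphi(C)^s$ and $\varphi(E_4C)^s$ vanish). In the sixth condition, the right-hand side vanishes because $E_3=0$, while the left-hand side vanishes because $\nabla^g E_1=\nabla^g E_g=0$ by the hypersymplectic condition on $\frg$.

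The proof is therefore a routine verification with no substantive obstacle: the content of the corollary is that the assumptions $\varphi(A)^s=0$, $[\varphi(A)^a,J_g]=0$ and $[\varphi(A)^a,E_g]=0$ are exactly what is required for the product almost para-complex structure $E_g\oplus E_h$ to be parallel for the Levi-Civita connection $\tilde\nabla$ of $\tilde g$. The only point worth stressing in the write-up is that the first hypothesis $\varphi^s=0$ simultaneously trivialises several of the conditions of Theorem~\ref{thm:general_HS} and ensures compatibility with Theorem~\ref{thm:construction} without any further constraint linking $\varphi$ to $J_h$.
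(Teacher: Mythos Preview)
Your proposal is correct and follows exactly the same approach as the paper: the paper's proof simply reads ``This follows from Theorem~\ref{thm:general_HS} by choosing $\tilde E$ with $E_2=E_3=0$, $E_1=E_g$ and $E_4=E_h$.'' Your write-up is a faithful, more detailed unpacking of that one line, and all six verifications are correct.
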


\begin{proof}
    This follows from Theorem~\ref{thm:general_HS} by choosing $\tilde E$ with $E_2=E_3=0$, $E_1=E_g$ and $E_4=E_h$.
\end{proof}

\begin{example}
    Let $\frh=\mathfrak{rh}_3$ with the hypersymplectic structure given in \cite[Theorem~23]{And06} and $\frg=\R^4$ with the hypersymplectic structure as in Example~\ref{ex:flat_HS_R4}. Consider the representation $\varphi:\frh\to\Der(\frg)$ given by $$\varphi(a_1)=\varphi(a_2)=\varphi(a_3)=0,\quad\varphi(a_4)=\begin{pmatrix}
        0 & 1 & 0 & 0\\
        0 & 0 & 0 & 0\\
        0 & 0 &0 & 1\\
        0 & 0 & 0 & 0
    \end{pmatrix}.$$

    Then $\tilde\frg$ is hypersymplectic, $2$-step nilpotent and flat. Its center is given by $\frz(\tilde\frg)=\escal{a_3,e_1,e_3}=[\tilde\frg,\tilde\frg]$, so it is again not of Kodaira type. We also see that $\tilde J$ is not abelian.
\end{example}

We can apply Corollary~\ref{cor:HS_phi^s=0} to get a large class of examples of hypersymplectic Lie algebras which are not of abelian type.

\begin{example}\label{ex:non-abelian_HS}
    Let $(\frg=\R^4,g,J_g,E_g)$ with the flat hypersymplectic structure of Example~\ref{ex:flat_HS_R4} and let $\frh$ be any nilpotent hypersymplectic Lie algebra. Fix a basis $\{a_j\}$ of $\frh$ such that the derived algebra is contained in $\Span{a_j\mid i>1}$, and define the representation $\varphi:\frh\to\Der(\R^4)$ by $$\varphi(a_1)=\begin{pmatrix}
        0 & 1 & 0 & 0\\
        0 & 0 & 0 & 0\\
        0 & 0 &0 & 1\\
        0 & 0 & 0 & 0
    \end{pmatrix}$$ and $\varphi(a_j)=0$ for all $j=2,\ldots,\dim_\R\frh$. The matrix $\varphi(a_1)$ is $g$-antisymmetric and commutes with $J_g$ and $E_g$. Hence, by Theorem~\ref{thm:general_HS}, $\tilde\frg$ is a nilpotent hypersymplectic Lie algebra. It is of non-abelian type in virtue of Lemma~\ref{lemma:abelian_cx}. Moreover, if $\frh$ is non-flat, so is $\tilde\frg$ by Lemma~\ref{lemma:tilde_nabla_XY} (as explained at the beginning of Section~\ref{subsection:non-abelian_h}).
\end{example}

To get some non-flat examples, we can take $\frh$ to be a non-flat $3$-step nilpotent hypersymplectic Lie algebra of \cite{AD06,BS18} or the non-flat $4$-step nilpotent hypersymplectic Lie algebra from \cite[Section~5]{BGL21}.

\begin{example}\label{ex:not_Kodaira_type}
    If in Example~\ref{ex:non-abelian_HS} we take $\frh$ to be abelian, then we obtain that $\tilde\frg$ is a $2$-step nilpotent hypersymplectic Lie algebra which is neither of Kodaira nor abelian type. This can be seen by noticing that the dimension of the center of $\tilde\frg$ is $\dim_\R\frh+1$, which is neither half the dimension of $\tilde\frg$ nor an even number (see Remark~\ref{remark:structureofabeliancomplex}). However, the metric $\tilde g$ is flat. Note that the dimensional argument above implies that no hypersymplectic structure on these Lie algebras can be of Kodaira type.
\end{example}

As a consequence of Example~\ref{ex:not_Kodaira_type}, we get the following result.

\begin{proposition}
    There exist (irreducible) $2$-step nilpotent Lie algebras in any dimension $4n$ for $n\geq2$ which admit a hypersymplectic structure, but admit no abelian hypersymplectic structure and no hypersymplectic structure of Kodaira type.
\end{proposition}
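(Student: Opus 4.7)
The plan is to specialize the construction of Example~\ref{ex:not_Kodaira_type} for each $n\geq 2$: take $\frg=\R^4$ with the flat hypersymplectic structure of Example~\ref{ex:flat_HS_R4}, take $\frh=\R^{4(n-1)}$ equipped with the standard abelian hypersymplectic structure, and define $\varphi\colon\frh\to\Der(\R^4)$ by sending $a_1$ to the given nilpotent matrix commuting with $J_g$ and $E_g$, and $a_j\mapsto 0$ for $j\geq 2$. By Corollary~\ref{cor:HS_phi^s=0}, the semidirect product $\tilde\frg=\R^4\rtimes_\varphi\R^{4(n-1)}$ carries a hypersymplectic structure; by Remark~\ref{remark:2-step}, $\tilde\frg$ is $2$-step nilpotent since $\varphi(a_i)\varphi(a_j)=0$; and by construction it has dimension $4n$.

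With existence settled, it remains to rule out abelian and Kodaira-type hypersymplectic structures on $\tilde\frg$. The essential observation is that both obstructions depend only on the underlying Lie algebra, through the dimension of its center. A direct computation (as in Example~\ref{ex:not_Kodaira_type}) gives $\dim\frz(\tilde\frg)=\dim_\R\frh+1=4n-3$. Since Kodaira type requires $\dim\frz(\tilde\frg)=\tfrac12\dim\tilde\frg=2n$, and $4n-3=2n$ has no integer solution with $n\geq 2$, no hypersymplectic structure on $\tilde\frg$ can be of Kodaira type. For abelian hypersymplectic structures $(\tilde g',\tilde J',\tilde E')$ on $\tilde\frg$, Remark~\ref{remark:structureofabeliancomplex} forces the center $\frz(\tilde\frg)$ to be $\tilde J'$-invariant, hence even-dimensional; but $4n-3$ is odd, a contradiction.

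The delicate point is the parenthetical irreducibility claim. In the form given above, $\tilde\frg$ decomposes as a Lie algebra into the $5$-dimensional indecomposable ideal $I_2=\Span{e_1,e_2,e_3,e_4,a_1}$ and the complementary $(4n-5)$-dimensional abelian ideal $I_1=\Span{a_2,\ldots,a_{4(n-1)}}$. However, neither summand has dimension divisible by $4$, so this Lie algebra splitting cannot be compatible with any hypersymplectic structure; in this sense $(\tilde\frg,\tilde g,\tilde J,\tilde E)$ is irreducible as a hypersymplectic Lie algebra. I expect this is the intended meaning of the parenthetical; a stricter Lie-algebra-level indecomposability would require enlarging the centre-free part of the construction (for example by coupling several $\varphi(a_j)$'s within a larger abelian nilpotent subalgebra of $\lie{sp}(2k,\R)$ acting on $\R^{4k}$), which I regard as the main technical obstacle, but not one needed for the three displayed non-existence assertions.
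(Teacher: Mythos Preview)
Your proposal is correct and follows essentially the same approach as the paper: the proposition is stated there as an immediate consequence of Example~\ref{ex:not_Kodaira_type}, using precisely the dimensional argument on the center ($\dim\frz(\tilde\frg)=\dim_\R\frh+1=4n-3$, which is odd and distinct from $2n$) that you reproduce. Your treatment is in fact slightly more careful than the paper's on the parenthetical irreducibility claim, which the paper does not justify explicitly; your observation that the obvious Lie-algebra splitting has summands of dimensions $5$ and $4n-5$, neither divisible by $4$, is a reasonable reading of what ``irreducible'' is meant to convey here.
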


\begin{remark}
Whilst examples of $2$-step nilpotent hypersymplectic Lie algebras which are not of Kodaira type (or direct products of Kodaira type and abelian hypersymplectic Lie algebras), such as those we constructed in Example~\ref{ex:not_Kodaira_type}, are scarce in the literature, we note that one was constructed in \cite[Section~7.2]{CPO11}.
\end{remark}

We can also look for some examples where the representation $\varphi$ has non-zero $g$-symmetric part. Let us consider the following.

\begin{example}
    Let $\frg=\frh=\R^4$ with metric and complex structure given by \begin{align*}
        g&=e^1\otimes e^1+e^2\otimes e^2-e^3\otimes e^3-e^4\otimes e^4,\\
        J_g&=e^1\otimes e_2-e^2\otimes e_1+e^3\otimes e_4-e^4\otimes e_3,
    \end{align*} and $h=-g,J_h=-J_g$. We define the representation $\varphi:\frh\to\Der(\frg)$ by \begin{align*}
        \varphi(a_1)=\varphi(a_3)&=\left(\begin{array}{rrrr}
            1 & 1 & 1 & 1 \\
            1 & -1 & 1 & -1 \\
            -1 & -1 & -1 & -1 \\
            -1 & 1 & -1 & 1
        \end{array}\right),\\
        \varphi(a_2)=\varphi(a_4)&=\left(\begin{array}{rrrr}
            1 & -1 & 1 & -1 \\
            -1 & -1 & -1 & -1 \\
            -1 & 1 & -1 & 1 \\
            1 & 1 & 1 & 1
        \end{array}\right).
    \end{align*}

    The map $\varphi$ satisfies the conditions of Theorem~\ref{thm:construction}, so we have a pseudo-K\"ahler structure on $\tilde\frg$. Moreover, since $\varphi(A)\varphi(B)=0$ for all $A,B\in\frh$, then $\tilde\frg$ is $2$-step nilpotent by Remark~\ref{remark:2-step}. One can check that the endomorphism $$\tilde E=\begin{pmatrix}
        0&\mathbbm{1}\\
        \mathbbm{1}&0
    \end{pmatrix}$$ and this $\varphi$ satisfy all the conditions in Theorem~\ref{thm:general_HS}. Then $(\tilde\frg,\tilde g,\tilde J,\tilde E)$ is a hypersymplectic Lie algebra. Moreover, its center is $\frz(\tilde\frg)=\escal{a_1-a_3,a_2-a_4,e_1-e_3,e_2-e_4}$, which is $\tilde J$-invariant, thus it is of Kodaira type and hence flat.
\end{example}

\begin{remark}
    In \cite[Section~7.3]{CPO11} a similar approach to our construction is considered. Here the authors also obtain examples of hypersymplectic Lie algebras of Kodaira type.
\end{remark}

So far, all the examples of $2$-step nilpotent hypersymplectic Lie algebras in the literature are flat. We have seen this for the ones of Kodaira type (see Proposition~\ref{prop:Kodaira_type_flat}), the ones of abelian type (see Proposition~\ref{prop:2-step_abelian_flat}) and the new examples obtained in this paper. This leads us to make the following conjecture.

\begin{conjecture}\label{con}
    If $(\frg,g,J,E)$ is a $2$-step nilpotent hypersymplectic Lie algebra, then the metric $g$ is flat.
\end{conjecture}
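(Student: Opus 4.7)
The plan is to exploit the decomposition $\frg = \frg_+ \oplus \frg_-$ into the $\pm 1$-eigenspaces of $E$. By the results of \cite{And06}, these are complementary totally isotropic Lagrangian subalgebras with respect to both $g$ and $\omega = g(J\cdot,\cdot)$, and they are interchanged by $J$; since $\nabla E = 0$, each $\frg_\pm$ is moreover $\nabla$-invariant. The $2$-step nilpotency of $\frg$ descends to each $\frg_\pm$, and forces $[\frg,\frg] \subseteq \frz(\frg)$ to split as $[\frg_+,\frg_+] + [\frg_+,\frg_-] + [\frg_-,\frg_-]$ with the three summands lying in prescribed $E$-eigenspaces.

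I would then apply Koszul's formula on the $2$-step nilpotent Lie algebra $\frg$. The isotropy of $\frg_\pm$ makes many terms vanish automatically: for $X, Y \in \frg_+$, $[X,Y]$ lies in $\frg_+$ which is $g$-null, so $g(\nabla_X Y, Z) = 0$ for all $Z \in \frg_+$ and $\nabla_X Y \in \frg_+$; the nontrivial part is controlled by the nondegenerate dual pairing between $\frg_+$ and $\frg_-$ induced by $g$. An explicit expression for $R$ can then be derived in terms of the three bracket components above. The parallelism of $J$ and $E$ forces $R(X,Y)$ to commute with $J$, $E$, and $JE$ simultaneously, so $R(X,Y) \in \lie{sp}(2n,\R)$ in the representation induced by the hypersymplectic structure, and in particular preserves each $\frg_\pm$. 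Combining this $\lie{sp}(2n,\R)$ constraint with the Jacobi identity and the integrability of $J$ and $E$, one should extract enough linear equations on the structure constants to conclude $R \equiv 0$.

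The main obstacle is that in the pseudo-Riemannian $2$-step nilpotent setting, $\frz(\frg)$ may be $g$-degenerate; indeed, in all known flat Kodaira-type examples one has $\frz(\frg) = [\frg,\frg]$ totally isotropic, so the classical Riemannian Eberlein-style operators $j(Z)\colon\frv\to\frv$ defined by $g(j(Z) X, Y) = g([X, Y], Z)$ are not directly available and must be replaced by a formalism adapted to the two Lagrangian pieces. Moreover, while Ricci-flatness (a consequence of the hypersymplectic condition) furnishes some curvature identities for free, it is by itself much weaker than flatness. The hard part is therefore to isolate the minimal algebraic consequence of $2$-step nilpotency that, together with the three-fold parallelism and the symplectic pairing between $\frg_+$ and $\frg_-$ induced by $g$, rules out all nonzero curvature components. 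A natural concrete subgoal is to generalize the proof of Proposition~\ref{prop:2-step_abelian_flat} by replacing the abelian-complex hypothesis with the bracket compatibility forced by the $J$-swapping between $\frg_+$ and $\frg_-$, then verifying the mixed curvature terms $R(X_+, Y_-)$ vanish by a direct case analysis on the four possible positions of $X, Y, Z, W$ in the decomposition $\frg_+ \oplus \frg_-$.
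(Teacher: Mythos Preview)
The statement you are attempting to prove is labelled \emph{Conjecture} in the paper, and the paper does \emph{not} prove it. The authors explicitly present it as an open question, motivated by the observation that all known $2$-step nilpotent hypersymplectic Lie algebras (Kodaira type, abelian type, and the new examples constructed in the paper) turn out to be flat. There is therefore no proof in the paper against which to compare your proposal.

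As for your proposal itself: it is not a proof but a strategy, and you yourself identify the gap. The decomposition $\frg=\frg_+\oplus\frg_-$ and the observation that $R(X,Y)\in\lie{sp}(2n,\R)$ are correct and standard, but the sentence ``one should extract enough linear equations on the structure constants to conclude $R\equiv 0$'' is precisely the missing content. Ricci-flatness together with $\Sp(2n,\R)$-holonomy does not by itself force flatness (this already fails for hyperk\"ahler), so the $2$-step nilpotency must be used in an essential way beyond what you have written. Your acknowledged obstacle---that the center may be $g$-degenerate, so the Eberlein $j(Z)$-operators are unavailable---is exactly the point where the argument stalls: in the Kodaira-type and abelian-type cases that \emph{are} known to be flat, the proofs in \cite{FPPS04} and \cite{AD06} use extra structural hypotheses (half-dimensional $J$-invariant center, respectively abelian $J$) that your general outline does not replace with anything concrete. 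The ``concrete subgoal'' you propose at the end is still a restatement of the problem rather than a reduction of it. In short, the plan is reasonable as a starting framework, but no step in it goes beyond what was already available when the conjecture was formulated, and the paper does not claim otherwise.
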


Notice that in \cite{FPPS04}, examples of non-flat hypersymplectic structures are given on some $2$-step nilpotent Lie groups. However these hypersymplectic structures are not left-invariant, so they are not defined on the corresponding Lie algebra.

\subsection{Hypersymplectic structures in every nilpotency step}

As we have mentioned, there are examples of hypersymplectic $k$-step nilpotent Lie algebras for $k=2,3,4$, and as far as we know, all examples in the literature have $k\leq 4$. In this subsection we construct hypersymplectic structures on $k$-step nilpotent Lie algebras for every $k$.\medskip

In relation to Conjecture~\ref{con}, we note that some of the known hypersymplectic Lie algebras of step $k=3,4$ are not flat, and those that we obtain in this section are also non-flat.\medskip

Let $\frg=\R^{4n}$ equipped with the flat hypersymplectic structure $(g,J_g,E_g)$ analogous to the one of Example~\ref{ex:flat_HS_R4}. To be more precise, we consider the following complex and para-complex structure \begin{equation}\label{eq:J_E_R4n}
    J_g=\begin{pmatrix}
    0&-\mathbbm{1}_{2n}\\
    \mathbbm{1}_{2n}&0
\end{pmatrix},\quad E_g=\begin{pmatrix}
    \mathbbm{1}_{2n}&0\\
    0&-\mathbbm{1}_{2n}
\end{pmatrix},\end{equation} together with the metric \begin{equation}\label{eq:g_R4n}
g=\begin{pmatrix} 0 & 0 & 0 & I \\ 0 & 0 & -I & 0 \\
0 & -I & 0 & 0 \\
I & 0 & 0 & 0 \end{pmatrix},\end{equation}
where $I\in\End(\R^n)$ is the anti-diagonal matrix where all the entries are 1, i.e.\ all the entries of $I$ are zero except the entries on the off-diagonal, which are ones. Let $A\in\End(\R^n)$ and consider the endomorphism $$N_{4n}=\begin{pmatrix}
    A&0&0&0\\
    0&-IA^\top I&0&0\\
    0&0&A&0\\
    0&0&0&-IA^\top I
\end{pmatrix}\in\End(\R^{4n}).$$ One can check that such $N_{4n}$ commutes with $J_g$ and $E_g$ and is $g$-antisymmetric.\medskip

Let $A\in\End(\R^n)$ be a $k$-step nilpotent matrix, i.e.\ $A^k=0$ for some $2\leq k\leq n$. Then we have that $N_{4n}^k=0$. By a similar argument as in Example~\ref{ex:non-abelian_HS} we get the following.

\begin{proposition}
\label{prop:arbitrarystep}
    Let $(\frg=\R^{4n},g,J_g,E_g)$ with the flat hypersymplectic structure given by \eqref{eq:J_E_R4n} and \eqref{eq:g_R4n} and let $\frh$ be a $s$-step nilpotent hypersymplectic Lie algebra. Take $s\leq k\leq n$. If we define the representation $\varphi:\frh\to\Der(\frg)$ by $\varphi(a_1)=N_{4n}$ and zero elsewhere, then $\tilde\frg$ is a $k$-step nilpotent hypersymplectic Lie algebra of non-abelian type. Moreover, if $\frh$ is non-flat, so is $\tilde\frg$.
\end{proposition}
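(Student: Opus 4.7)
The strategy is to deduce the hypersymplectic statement from Corollary~\ref{cor:HS_phi^s=0}, then control the nilpotency class by computing the lower central series, and finally handle the qualitative assertions (non-abelian type and non-flatness) via Lemma~\ref{lemma:abelian_cx} and Lemma~\ref{lemma:tilde_nabla_XY}. Throughout, a key preliminary is to fix a basis $\{a_1,\dotsc,a_m\}$ of $\frh$ such that $[\frh,\frh]_\frh\subset\Span{a_2,\dotsc,a_m}$; this is possible for any nilpotent $\frh$ by extending a generator of $\frh/[\frh,\frh]_\frh$ to a basis, exactly as in Example~\ref{ex:non-abelian_HS}.

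The first step is to check that $\varphi$, as defined, is a Lie algebra homomorphism and satisfies the assumptions of Corollary~\ref{cor:HS_phi^s=0}. The basis choice forces $\varphi([a_i,a_j]_\frh)=0$ for all $i,j$, while $[\varphi(a_i),\varphi(a_j)]=0$ trivially since only $\varphi(a_1)$ is nonzero. Since $\varphi(A)$ is a scalar multiple of $N_{4n}$, which (by the discussion preceding the statement) is $g$-antisymmetric and commutes with $J_g$ and $E_g$, we have $\varphi(A)^s=0$ and $\varphi(A)^a$ commutes with $J_g$ and $E_g$. Applying Corollary~\ref{cor:HS_phi^s=0} with $E_1=E_g$, $E_4=E_h$, $E_2=E_3=0$ produces the desired hypersymplectic structure $(\tilde\frg,\tilde g,\tilde J,\tilde E)$.

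The main step (and expected main obstacle) is controlling the nilpotency step. Writing $\nu:=N_{4n}$, I would prove by induction that for all $i\ge 2$,
\[
C^i(\tilde\frg)=\nu^{i-1}(\frg)+C^i(\frh),
\]
where $C^i$ denotes the $i$-th term of the lower central series. The base case $i=2$ follows from the bracket formula, using that $\frg$ is abelian. For the inductive step, the contribution $[\tilde\frg,\nu^{i-1}(\frg)]$ reduces to $a_1^A\nu^i(Y)\in\nu^i(\frg)$, while the contribution $[\tilde\frg,C^i(\frh)]$ produces $-\varphi(Z)X+[A,Z]_\frh$ with $Z\in C^i(\frh)$; the crucial point is that for $i\geq 2$ we have $C^i(\frh)\subset[\frh,\frh]_\frh\subset\ker\varphi$ by the basis choice, so this term lies in $C^{i+1}(\frh)$. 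Since $A^{k-1}\neq 0$ implies $\nu^{k-1}(\frg)\neq 0$, and $A^k=0$ together with $k\geq s$ gives $\nu^k=0$ and $C^{k+1}(\frh)=0$, the claim yields $C^k(\tilde\frg)\neq 0$ and $C^{k+1}(\tilde\frg)=0$, i.e.\ $\tilde\frg$ is $k$-step nilpotent.

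Finally, to show $\tilde J$ is not abelian I appeal to Lemma~\ref{lemma:abelian_cx}: if $\tilde J$ were abelian, then $\varphi(J_hA)^aJ_g=\varphi(A)^a$ for all $A\in\frh$; since $\varphi^s=0$, this reduces to $\varphi(J_hA)J_g=\varphi(A)$. Taking $A=a_1$ and writing $J_ha_1=c_1a_1+\sum_{j>1}c_ja_j$ turns this into $c_1N_{4n}J_g=N_{4n}$, which by a direct block computation using the explicit forms \eqref{eq:J_E_R4n}, \eqref{eq:g_R4n} forces $A=0$, contradicting $A^{k-1}\neq 0$. For non-flatness, Lemma~\ref{lemma:tilde_nabla_XY} gives $\tilde\nabla_AB=\nabla^h_AB$ for $A,B\in\frh$, so the corresponding components of the curvature tensors agree, and any non-zero $R^h$ immediately yields a non-zero $\tilde R$.
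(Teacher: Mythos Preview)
Your proposal is correct and follows essentially the same approach as the paper, which merely states ``By a similar argument as in Example~\ref{ex:non-abelian_HS}'' before the proposition. You invoke Corollary~\ref{cor:HS_phi^s=0} (the paper's Example~\ref{ex:non-abelian_HS} cites Theorem~\ref{thm:general_HS} directly, but your choice is the cleaner specialization), Lemma~\ref{lemma:abelian_cx} for non-abelian type, and Lemma~\ref{lemma:tilde_nabla_XY} for non-flatness, exactly as intended; your inductive computation of the lower central series $C^i(\tilde\frg)=\nu^{i-1}(\frg)+C^i(\frh)$ is a correct and welcome elaboration of a point the paper leaves entirely implicit.

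Two minor remarks. In the non-abelian-type step you can shortcut the block computation: since $N_{4n}$ commutes with $J_g$, the equation $c_1N_{4n}J_g=N_{4n}$ rewrites as $N_{4n}(c_1J_g-\mathbbm{1})=0$, and $c_1J_g-\mathbbm{1}$ is invertible (its eigenvalues are $-1\pm c_1 i$), forcing $N_{4n}=0$. Also, there is a stray ``$a_1^A\nu^i(Y)$'' in your inductive step that should read $\varphi(A)Y\in\nu^i(\frg)$.
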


If we take $\frh$ to be $\R^4$ or $\mathfrak{rh}_3$ and $k=n$, then we get a flat hypersymplectic structure on an $n$-step nilpotent Lie algebra of dimension $4n+4$. One can ask what is the minimal dimension $4n$ of a nilpotent Lie algebra of step $k$ that admits a hypersymplectic structure. Suggestively, the inequality $k\leq 2n$ is known to hold for $n=1,2$ (see \cite[Corollary~5.15]{BFLM21}). We do not expect our construction to be optimal in regard to this question. For instance, for step $k=4$, an $8$-dimensional hypersymplectic nilpotent Lie algebra was constructed in \cite[Theorem~5.5]{BGL21}, whereas Proposition~\ref{prop:arbitrarystep} yields examples in dimension $20$.

% \begin{verbatim}
% n = 3;
% J = block_matrix([[0,-identity_matrix(2*n)],[identity_matrix(2*n),0]]);
% E = block_diagonal_matrix([identity_matrix(2*n),-identity_matrix(2*n)]);
% I = matrix(SR,n,n);
% for i in range(n):
%     I[i,n-1-i] = 1;
% g = block_matrix([[0,0,0,I],[0,0,-I,0],[0,-I,0,0],[I,0,0,0]]);
% A = random_matrix(ZZ,n,n);
% D = block_diagonal_matrix([A,-I*A.T*I,A,-I*A.T*I]);
% Dstar = g^-1*D.T*g;
% Ds = (1/2)*(D+Dstar);
% show(Ds==zero_matrix(4*n));
% \end{verbatim}

\bibliographystyle{amsplain}
\providecommand{\bysame}{\leavevmode\hbox to3em{\hrulefill}\thinspace}
\providecommand{\MR}{\relax\ifhmode\unskip\space\fi MR }
% \MRhref is called by the amsart/book/proc definition of \MR.
\providecommand{\MRhref}[2]{%
  \href{http://www.ams.org/mathscinet-getitem?mr=#1}{#2}
}
\providecommand{\href}[2]{#2}

\end{document}